\theoremstyle{plain}
\newtheorem{theorem}{Theorem}[section]
\newtheorem{proposition}[theorem]{Proposition}
\newtheorem{lemma}[theorem]{Lemma}  
\newcommand{\supp}{\mathop{\mathrm{supp}}\nolimits} 
\newcommand{\card}{\mathop{\mathrm{card}}\nolimits} 
\newcommand{\re}{\mathop{\mathrm{Re}}\nolimits} 
\numberwithin{equation}{section}  
\theoremstyle{definition}
\theoremstyle{remark}
\newtheorem{remark}[theorem]{Remark}
\def\XXint#1#2#3{{\setbox0=\hbox{$#1{#2#3}{\int}$}
\vcenter{\hbox{$#2#3$}}\kern-.5\wd0}}
\title[Results in estimates for $k$-plane transforms]
{Results in estimates for $k$-plane transforms}  
\author{Shuichi Sato} 
\begin{document} 
\address{Department of Mathematics,
Faculty of Education,
Kanazawa University,
Kanazawa 920-1192,
Japan}
\email{shuichi@kenroku.kanazawa-u.ac.jp, shuichipm@gmail.com}
\begin{abstract}  
This is an expository paper. 
We give proofs of some results of M. Christ (1984) and S. W. Drury (1984)  
for $k$-plane transforms.  Also, we give proofs for 
some related results including that for the existence of invariant measures 
on certain homogeneous manifolds of Lie groups.   
  \end{abstract}
  \thanks{2020 {\it Mathematics Subject Classification.\/}
  42-02, 14M15.
  \endgraf
  {\it Key Words and Phrases.}  $X$-ray transforms, Radon transforms, 
$k$-plane transforms, 
 Grassmannian manifolds, homogeneous spaces of Lie groups, 
invariant measures.}
\thanks{The author is partly supported by Grant-in-Aid
for Scientific Research (C) No. 20K03651, Japan Society for the  Promotion of 
Science.}

\maketitle

\section{Introduction }  
In this note, proofs will be presented in some detail  
for results of M. Christ \cite{Ch} in 1984 
(Theorem \ref{the1} below) and S. W. Drury \cite{D} in 1984 (Lemma \ref{L1.1} 
below; see also a result of Blaschke which 
can be found in \cite[Chap. 12]{Sant})  
related to  $k$-plane transforms $T_{k,n}$.  
For a function $f$ on $\Bbb R^n$ and an affine $k$-plane $p$, 
$1\leq k<n$, the value of $T_{k,n}f$ 
at $p$ is defined to be $\int_p f$, where the integration is with respect 
to the $k$-dimensional Lebesgue measure on $p$ (see \eqref{ee1.5} below 
for a more precise definition). 
When $k=1$, $T_{k,n}$ is called the $X$-ray transform and when $k=n-1$, it 
is called the Radon transform (see \cite{CDR} for applications 
 in harmonic analysis; also related results can be found in \cite{Sa}).
\par 
Let $G_{k,n}$ be the Grassmannian manifold of all $k$-planes in $\Bbb R^n$ 
passing through the origin ($1\leq k<n$). 
Theorem \ref{the1} and Lemma \ref{L1.1} are stated in terms of the 
${\rm SO}(n)$ invariant measure $d\sigma$ on $G_{k,n}$. 
To prove Lemma \ref{L1.1} we shall show 
Lemma \ref{L2.3}, which is also in \cite{D} and 
 can be regarded as a polar coordinates expression of 
the Lebesgue measure on $\Bbb R^n$ when $k=1$.   Lemma \ref{L2.3} can be 
used to prove  Lemma \ref{L2.6} too, which is an analogue of Lemma \ref{L2.3}, 
where $\Bbb R^n$ is replaced by $S^{n-1}$ (the unit sphere in $\Bbb R^n$) and 
the Lebesgue measure on $\Bbb R^n$ is replaced by the Lebesgue surface measure 
on $S^{n-1}$. 
Our proof of Lemma \ref{L2.3} is different from that of \cite{D}; it is 
based on straightforward computations 
concerning local coordinates on the Grassmannian manifolds 
and the Gram determinants (see Boothby \cite[pp. 63--65]{Bo} 
and Spivak \cite[Section 2 of Chapter 13]{Sp} for the local coordinates). 
\par 
We also consider an operator $S_{k,n}$ related to $T_{k,n}$,  
which maps a function on $S^{n-1}$ to a function on $G_{k,n}$ 
(see \eqref{Skn} below) and prove in Theorem \ref{the2} 
estimates for $S_{k,n}$ between the Lorentz space $L^{n/k, n}(S^{n-1})$  and  
the Lebesgue space $L^n(G_{k,n})$ 
with respect to the Lebesgue surface measure on $S^{n-1}$ and 
the measure $d\sigma$ on $G_{k,n}$, respectively.  
Theorem \ref{the1} follows from a more general multilinear estimates in 
Proposition \ref{prop1.3}. 
We shall prove Proposition \ref{prop1.3} by using 
Theorem \ref{the2} together with  Lemmas \ref{L1.1} and \ref{L2.6}. 
 Theorem \ref{the2} follows from Proposition \ref{p4.1} and from 
 more general results in Proposition \ref{p4.2} for $k\geq 2$, which 
are also stated as multilinear estimates.  
\par 
Proposition \ref{prop1.3} and Proposition \ref{p4.2}
will be shown by using the estimates in 
\eqref{e10} and  \eqref{e2.15} below, respectively, 
by applying interpolation arguments.  
In this note, we give proofs of interpolation results required 
in the arguments for the multilinear estimates 
(Sections \ref{sec5}--\ref{sec9}).  
For basic results on interpolation, we mainly refer to the book 
Bergh-Lofstrom \cite{BL} but we reproduce proofs 
of some results important for this note.  
To prove \eqref{e2.15} we also apply 
 induction arguments using Lemma \ref{L2.6} and  
Lemma \ref{L2.10} on the Gram determinants.  
\par 
Finally, we give a proof of the existence of invariant measures on 
certain homogeneous manifolds of Lie groups including the Grassmannian 
manifolds (Sections \ref{j8intro}--\ref{sec14}).  
The invariant measures on the  Grassmannian manifolds are used in stating 
 results mentioned above. 
The Grassmannian manifold is not always orientable but always admits an 
invariant measure with respect to ${\rm SO}(n)$. A condition for a homogeneous 
manifold $G/H$ to admit an 
invariant measure with respect to the action of $G$ 
will be stated in terms of the determinants of 
linear mappings in the adjoint representations of a Lie group $G$ and 
its closed subgroup $H$ on non-singular linear transformation groups of 
their Lie algebras, which will be applied to the case of the Grassmannian 
manifolds.  
\par 
For $\theta \in G_{k,n}$, define 
$$ \theta^\perp=\{x\in \Bbb R^n: x\perp \theta \}, $$ 
where 
$x\perp \theta$ means that $\langle x, y \rangle =0$ for all $y\in \theta$;  
$\langle x, y \rangle$ denotes the inner product in $\Bbb R^n$:  
$$\langle x, y \rangle= \sum_{j=1}^nx_jy_j; 
\quad x=(x_1, \dots , x_n), \quad y=(y_1, \dots , y_n). $$ 
Let 
\begin{equation}\label{def1}
\Bbb S=\{(x,\theta): x\in \theta^\perp, \theta \in G_{k,n}\}. 
\end{equation}  
This can be regarded as a parameterization of all affine $k$-planes in 
$\Bbb R^n$.  
We write $\pi=(x,\theta)$ for $(x,\theta)\in \Bbb S$. 
We define a measure $d\nu$ on $\Bbb S$ by 
\begin{equation}\label{def2} 
d\nu(\pi)=d\nu(x,\theta)= d\lambda_{\theta^\perp}(x)\, d\sigma(\theta), 
\end{equation} 
where 
\begin{enumerate} 
\item $d\lambda_{\theta^\perp}$ is the $n-k$ dimensional Lebesgue measure 
on the  hyperplane $\theta^\perp$, 
 which is considered as a singular measure on $\Bbb R^n$; 
\item $d\sigma$ denotes the ${\rm SO}(n)$-invariant measure on $G_{k,n}$, where ${\rm SO}(n)$ denotes the special orthogonal group on $\Bbb R^n$ 
(see Proposition \ref{j8p1} and Remark \ref{re14.2} in Section \ref{sec14}). 
\end{enumerate}   
\par 
Let $d\lambda_{P(x,\theta)}$ be a measure supported 
on $P(x,\theta)=\{x+y : y\in \theta\}$, $x\in \Bbb R^n$, $\theta\in G_{k,n}$, 
defined as 
\begin{equation}\label{def3}
\int_{\Bbb R^n} f(z)\ d\lambda_{P(x,\theta)}(z)
=\int_{\theta} f(x+y)\, d\lambda_\theta(y),    
\end{equation} 
where $d\lambda_\theta(y)$ is the $k$-dimensional Lebesgue measure on 
$\theta$ considered as a singular measure on $\Bbb R^n$: 
$d\lambda_\theta=d\lambda_{P(0, \theta)}$.  If we decompose 
$$x=x_\theta + x_{\theta^{\perp}}, \quad \text{where \quad 
$x_\theta\in \theta,  \quad x_{\theta^{\perp}}\in \theta^\perp$, } 
 $$  
 then it is easy to see that 
\begin{equation} \label{ee1.4}
 \int f(x+y)\, d\lambda_\theta(y)= \int f(x_{\theta^\perp} +y)
 \, d\lambda_\theta(y). 
\end{equation}

\par 
For $(x,\theta)\in \Bbb S$, let 

\begin{equation}\label{ee1.5}  
T_{k,n}(f)(x,\theta)
=\int f(z)\, d\lambda_{P(x,\theta)}(z)=\int f(x+y)\, d\lambda_\theta(y). 
\end{equation} 
For $\theta \in G_{k,n}$,  
let $S^{k-1}_\theta$ be the unit sphere in $\theta$: 
\begin{equation*} 
S^{k-1}_\theta =S^{n-1}\cap \theta =\left\{ y\in \theta : |y|=\langle y,y 
\rangle^{1/2}= 1\right\}. 
\end{equation*}
Define 
\begin{equation} \label{Skn}
S_{k,n}(f)(\theta)
=\int_{S^{k-1}_\theta} f(\omega)\, d\lambda_{\theta}(\omega), 
\end{equation}
where $ d\lambda_{\theta}(\omega)$ is the unique probability measure on 
$S^{k-1}_\theta$ invariant under the action of ${\rm SO}(k)$ on $\theta$; 
here we note that  
${\rm SO}(k)={\rm SO}_\theta(k)= \{O \in {\rm SO}(n): O(\theta) 
\subset \theta\}$. 
\par 
Let $C_0(\Bbb R^n)$ be the set of all continuous functions on $\Bbb R^n$ 
with compact support. Let $C(S^{n-1})$ be the set of all continuous functions 
on $S^{n-1}$. 
We have the following results (see \cite{Ch}). 
\begin{theorem}\label{the1} Let $1 \leq k<n$, $k\in \Bbb Z$ 
$($the set of integers$)$. 
For $f\in C_0(\Bbb R^n)$ we have 
$$ \|T_{k,n}f\|_{L^{n+1}(\Bbb S)}\leq  
C\|f\|_{L^{\frac{n+1}{k+1}, n+1}(\Bbb R^n)}, $$  
where 
$$\|T_{k,n}f\|_{L^{n+1}(\Bbb S)}
=\left(\int_{\Bbb S} \left|T_{k,n}(f)(x,\theta)\right|^{n+1}\, d\nu(x,\theta) 
\right)^{\frac{1}{n+1}}  $$  
and $L^{p,q}(\Bbb R^n)$ denotes the Lorentz space on 
$\Bbb R^n$ $($\!see \cite{H}, \cite[Chap. V]{SW}$)$. 
 
\end{theorem}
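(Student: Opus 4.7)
The plan is to deduce Theorem \ref{the1} from its multilinear analogue, Proposition \ref{prop1.3}. Specialising the $(n+1)$-linear bound
\begin{equation*}
\int_{\Bbb S} \prod_{j=1}^{n+1} T_{k,n}f_j(\pi)\, d\nu(\pi)
\le C\prod_{j=1}^{n+1}\|f_j\|_{L^{(n+1)/(k+1),n+1}(\Bbb R^n)}
\end{equation*}
to $f_1=\cdots=f_{n+1}=f$ immediately gives $\|T_{k,n}f\|_{L^{n+1}(\Bbb S)}^{n+1}\le C\|f\|_{L^{(n+1)/(k+1),n+1}(\Bbb R^n)}^{n+1}$, which is the desired inequality. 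So the whole content lies in proving the multilinear form.

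To attack the multilinear estimate, I would first expand each factor in polar coordinates within its $k$-plane,
\begin{equation*}
T_{k,n}f_j(x,\theta)=\int_0^\infty r^{k-1}\,S_{k,n}\bigl(f_j(x+r\,\cdot)\bigr)(\theta)\, dr,
\end{equation*}
and then apply Lemma \ref{L1.1}, the polar-coordinates identity built from the Grassmannian measure $d\sigma$, to rewrite the $(x,\theta)$-integration (together with the radial variables) as Lebesgue integration on $\Bbb R^n$ with an appropriate Gram-determinant Jacobian. Lemma \ref{L2.6} supplies the analogous change of variables on $S^{n-1}$ and is used to absorb the unit vectors $\omega$ arising from the spherical averages. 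This converts the problem into a multilinear bound for the spherical operator $S_{k,n}$.

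At this stage I would apply Theorem \ref{the2}, the spherical $L^{n/k,n}(S^{n-1})\to L^n(G_{k,n})$ estimate, to each factor and couple the factors by H\"older's inequality on the Lorentz scale, reassembling the radial integrations back into Lorentz norms on $\Bbb R^n$. This produces a non-endpoint multilinear bound of the shape of \eqref{e10} in the paper. To land on the sharp Lorentz second index $n+1$ at every factor simultaneously, I would invoke the multilinear Lorentz-space interpolation machinery developed in Sections \ref{sec5}--\ref{sec9}, interpolating between this bound and appropriate trivial endpoints in which one factor is placed in a large $L^p$-space and the remaining ones absorb the leftover exponents.

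The main obstacle will be this final interpolation. The direct combination of Theorem \ref{the2} with the polar-coordinates formulas does not give the correct Lorentz second index, and a careful symmetric multilinear interpolation, treating each of the $n+1$ factors in turn, is required in order to hit $L^{(n+1)/(k+1),n+1}$ with the right second index. A secondary difficulty is the bookkeeping: one must verify that the Gram determinants produced by Lemmas \ref{L1.1} and \ref{L2.6} and the homogeneity in the radial variables combine so that the total exponent is exactly $(n+1)/(k+1)$, as forced by the scaling invariance of $T_{k,n}$.
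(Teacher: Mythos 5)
Your high-level route matches the paper's: reduce to the multilinear bound (Proposition \ref{prop1.3}), reparameterize via Gram determinants (Lemma \ref{L1.1} and Lemma \ref{L2.6}), apply the spherical estimate (Theorem \ref{the2}), and then interpolate. However, the most substantive step of the paper's argument is missing or blurred in your sketch. After applying Lemma \ref{L1.1}, the paper singles out two anchor functions $f_0, f_1$ and writes
$A_{k,n}(f_0,\dots,f_n)=\iint f_0(u)f_1(v)K(u,v)\,du\,dv$
with $K(u,v)=|v-u|^{k-n}\Omega\bigl(u,(v-u)/|v-u|\bigr)$, where $\Omega$ involves only $f_2,\dots,f_n$. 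It is $\Omega$ that gets estimated via Lemma \ref{L2.6}, Theorem \ref{the2}, and Lemma \ref{L1.5}; then $f_1$ is coupled to $K(u,\cdot)$ by weak-type $L^{n/k,1}$--$L^{q,\infty}$ duality (the layer-cake computation \eqref{e3.5}), and $f_0$ is simply placed in $L^1$ by taking $\sup_u$. Note also a mischaracterization: the resulting estimate \eqref{e10} is not ``non-endpoint'' --- it is precisely the $L^1$-endpoint that anchors the interpolation, having one factor in $L^1$ and $n$ factors in $L^{n/k,1}$; Lemma \ref{L1.6} and then Proposition \ref{prop1.3} iteratively move away from this endpoint to reach $L^{(n+1)/(k+1),n+1}$ in every slot. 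Your plan of applying Theorem \ref{the2} ``to each factor and coupling by H\"older on $G_{k,n}$'' would account for only $n$ of the $n+1$ factors; without first extracting the kernel $K(u,v)$, there is no mechanism to absorb the last one, which is exactly what the $\sup_u$/$L^1$ step accomplishes. Finally, Lemma \ref{L1.1} is not itself a polar-coordinates identity; the polar-coordinate content resides in Lemma \ref{L2.3} (used inside the proof of Lemma \ref{L1.1}) and Lemma \ref{L2.6}.
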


\begin{theorem}\label{the2} For $f\in C(S^{n-1})$ we have 
$$ \|S_{k,n}f\|_{L^n(G_{k,n})}= 
\left(\int_{G_{k,n}} \left|S_{k,n}(f)(\theta)\right|^n
\, d\sigma(\theta)\right)^{\frac{1}{n}}  
\leq  C\|f\|_{L^{\frac{n}{k}, n}(S^{n-1})}, $$  
where $1\leq k<n$, $k\in \Bbb Z$ and the Lorentz space 
$L^{\frac{n}{k}, n}(S^{n-1})$ is defined with respect to the Lebesgue 
surface measure on $S^{n-1}$. 
\end{theorem}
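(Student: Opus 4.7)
I would prove Theorem \ref{the2} by splitting into the cases $k=1$ and $k\geq 2$, mirroring the announced structure (Propositions \ref{p4.1} and \ref{p4.2}). When $k=1$, the set $S^0_\theta$ consists of the two antipodal points that span $\theta$, so $S_{1,n}f(\theta)=\tfrac12(f(\omega)+f(-\omega))$ for $\theta=\mathrm{span}(\omega)$. Pushing forward $d\sigma$ to $S^{n-1}/\{\pm 1\}$ and using $L^{n,n}=L^n$, the inequality reduces to $\|S_{1,n}f\|_{L^n}\leq\|f\|_{L^n}$, which is the triangle inequality applied to the even part of $f$.

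For $k\geq 2$, the plan is to deduce the theorem from an $n$-linear estimate. Expanding,
\[
\|S_{k,n}f\|_{L^n(G_{k,n})}^n=\Lambda(|f|,\dots,|f|),\qquad \Lambda(f_1,\dots,f_n):=\int_{G_{k,n}}\prod_{i=1}^n S_{k,n}f_i(\theta)\,d\sigma(\theta),
\]
so it suffices to establish
\[
\Lambda(f_1,\dots,f_n)\leq C\prod_{i=1}^n\|f_i\|_{L^{n/k,n}(S^{n-1})}.
\]
I would first prove the baseline estimate \eqref{e2.15}, in particular the restricted-weak-type endpoint $\Lambda(\chi_{E_1},\dots,\chi_{E_n})\leq C\prod_i |E_i|^{k/n}$. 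By Fubini,
\[
\Lambda(f_1,\dots,f_n)=\int_{G_{k,n}}\int_{(S^{k-1}_\theta)^n}\prod_{i=1}^n f_i(\omega_i)\prod_{i=1}^n d\lambda_\theta(\omega_i)\,d\sigma(\theta).
\]
Then invoke Lemma \ref{L2.6} to trade $d\sigma(\theta)\,d\lambda_\theta(\omega_1)\cdots d\lambda_\theta(\omega_k)$ for an integral over $(S^{n-1})^k$ weighted by an appropriate power of the Gram determinant $G(\omega_1,\dots,\omega_k)$, thereby pinning down $\theta=\mathrm{span}(\omega_1,\dots,\omega_k)$. The remaining $n-k$ vectors $\omega_{k+1},\dots,\omega_n$ are already confined to $\theta$; handle them inductively, at each step lifting one integration back to $S^{n-1}$ via another application of Lemma \ref{L2.6} and using Lemma \ref{L2.10} to bookkeep the change in Gram determinants.

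Finally, upgrade the baseline to the target Lorentz bound by the multilinear interpolation results of Sections \ref{sec5}--\ref{sec9}, and specialize $f_i=|f|$ to conclude. The main obstacle is the induction carrying \eqref{e2.15}: tracking the Gram-determinant exponents through repeated applications of Lemmas \ref{L2.6} and \ref{L2.10} so that the resulting baseline is sharp enough for multilinear Lorentz interpolation to deliver exactly $L^{n/k,n}$ on each factor. Once that bookkeeping is completed, the remaining steps are essentially formal.
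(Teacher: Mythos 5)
Your overall architecture matches the paper: reduce Theorem~\ref{the2} to the multilinear estimates of Propositions~\ref{p4.1} and~\ref{p4.2}, treat $k=1$ directly (your averaging argument is exactly the paper's), prove an endpoint estimate for $k\geq 2$ by combining Lemmas~\ref{L2.6} and~\ref{L2.10} with an induction, and finish by the interpolation machinery of Sections~\ref{sec5}--\ref{sec9}. However, the crucial baseline is misstated, and the error is not cosmetic. Estimate~\eqref{e2.15} is the \emph{asymmetric} bound
\[
|B_{k,n}(f_1,\dots,f_n)| \leq C\,\|f_1\|_1 \prod_{j=2}^n \|f_j\|_{(n-1)/(k-1),\,1},
\]
whose restriction to indicator functions gives $|E_1|\,\prod_{j\geq 2}|E_j|^{(k-1)/(n-1)}$, not the symmetric $\prod_i |E_i|^{k/n}$ you write. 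The symmetric restricted weak-type corner corresponds to $\prod_i\|f_i\|_{n/k,1}$, and a single estimate at that corner cannot be interpolated with itself to yield the strictly stronger target $\prod_i\|f_i\|_{n/k,n}$ (note $L^{n/k,1}\subsetneq L^{n/k,n}$, so the conclusion is genuinely stronger on each factor). What makes the interpolation close is that~\eqref{e2.15} has a distinguished factor in $L^1$: together with its permutations and the bilinear Lorentz interpolation of Lemmas~\ref{L1.7} and~\ref{L1.9+} one manufactures the family of off-diagonal estimates (\eqref{e3.1} and its relatives) from which the second index $n$ emerges. You need that asymmetric family; the single symmetric corner is insufficient.

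The induction is also described inaccurately and omits the key new estimate. In the paper, Lemma~\ref{L2.6} is applied \emph{once} to obtain~\eqref{e4.1}; one then fixes $\omega_1=e_1$ by a rotation, passes to polar coordinates about $e_1$, and uses Lemma~\ref{L2.10} to factor the Gram determinant. This reduces $B_{k,n}$ in a single step to an integral of $\prod_{j\geq 2} S_{k-1,n-1}(F(f_j(O\cdot)))$ over $G_{k-1,n-1}$, where $F=C_k$ is the one-variable averaging operator $C_\alpha$, and the induction hypothesis is Theorem~\ref{the2} (equivalently Proposition~\ref{p4.1}) for $(k-1,n-1)$. There is no iterated ``lifting one integration back to $S^{n-1}$''; the descent is from $(k,n)$ to $(k-1,n-1)$. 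The essential ingredient your sketch misses entirely is the Lorentz bound~\eqref{e2.17}, $\|C_\alpha f\|_{L^{(n-1)/(\alpha-1)}(S^{n-2})}\lesssim\|f\|_{L^{(n-1)/(\alpha-1),1}(S^{n-1})}$, proved via Lemma~\ref{L2.9}; without it the reduction step has no way to convert $\|F(f_j(O\cdot))\|_{(n-1)/(k-1)}$ back into a norm of $f_j$, and~\eqref{e2.15} cannot be closed.
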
 
We also write $\|f\|_{p,q}$ for $\|f\|_{ L^{p,q}}$. 
\par 
For a function $g$ on $\Bbb S$, we consider a mixed norm 
\begin{equation*} 
\|g\|_{L^q(L^r)} =\left(\int_{G_{k,n}} \left(\int |g(x,\theta)|^r \, 
d\lambda_{\theta^\perp}(x)\right)^{q/r}\, d\sigma(\theta)\right)^{1/q}. 
\end{equation*} 
Then the norm on the left hand side of the inequality in the conclusion of 
Theorem \ref{the1} can be expressed as $\|T_{k,n}f\|_{L^{n+1}(L^{n+1})}$. 
We recall two more theorems from \cite{Ch}.  
\begin{theorem}[Drury's conjecture]\label{thm1.3}  
Suppose that $np^{-1} - (n-k)r^{-1}= k, 1\leq p\leq (n+1)/(k+1)$ and 
$q\leq (n-k)p'$, where $p'$ denotes the exponent conjugate to $p$. Then 
$$\|T_{k,n}f\|_{L^{q}(L^{r})} \leq C\|f\|_p, $$   
where $\|f\|_p$ denotes the norm of $f$ in $L^p(\Bbb R^n)$. 
\end{theorem}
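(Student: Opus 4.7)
The plan is to interpolate between an endpoint furnished by Theorem \ref{the1} and a trivial $L^1$ estimate, then enlarge the range of $q$ using the finite total measure of $G_{k,n}$.

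First, I would record the two endpoint bounds. \emph{Endpoint A.} Since $(n+1)/(k+1)\leq n+1$, we have the continuous embedding $L^{(n+1)/(k+1)}(\Bbb R^n)\hookrightarrow L^{(n+1)/(k+1),\,n+1}(\Bbb R^n)$ (because $L^{p,p}=L^p$ and $L^{p,q_1}\hookrightarrow L^{p,q_2}$ whenever $q_1\leq q_2$); combining with Theorem \ref{the1} yields
$$\|T_{k,n}f\|_{L^{n+1}(L^{n+1})}\leq C\|f\|_{(n+1)/(k+1)}.$$
\emph{Endpoint B.} For each fixed $\theta\in G_{k,n}$, Fubini applied to the orthogonal decomposition $\Bbb R^n=\theta\oplus\theta^\perp$ together with \eqref{ee1.5} gives $\int_{\theta^\perp}|T_{k,n}f(x,\theta)|\,d\lambda_{\theta^\perp}(x)\leq \|f\|_1$, so $\|T_{k,n}f\|_{L^\infty(L^1)}\leq\|f\|_1$.

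Second, I would invoke the mixed-norm Riesz--Thorin theorem (within the interpolation framework developed in Sections \ref{sec5}--\ref{sec9}). Interpolating the two endpoints with parameter $\vartheta\in[0,1]$ yields $T_{k,n}:L^{p_\vartheta}(\Bbb R^n)\to L^{q_\vartheta}(L^{r_\vartheta})$ with
$$\frac{1}{p_\vartheta}=\frac{(1-\vartheta)(k+1)}{n+1}+\vartheta,\qquad \frac{1}{q_\vartheta}=\frac{1-\vartheta}{n+1},\qquad \frac{1}{r_\vartheta}=\frac{1-\vartheta}{n+1}+\vartheta.$$
A direct algebraic check shows that along this one-parameter family the scaling identity $n/p_\vartheta-(n-k)/r_\vartheta=k$ holds and $q_\vartheta=(n-k)p_\vartheta'=(n+1)/(1-\vartheta)$. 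As $\vartheta$ sweeps $[0,1]$, $p_\vartheta$ sweeps precisely the admissible range $[1,(n+1)/(k+1)]$, so this covers the critical case $q=(n-k)p'$ of the theorem.

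Finally, to reach the full range $q\leq(n-k)p'$ I would use the finiteness of $d\sigma$ on the compact manifold $G_{k,n}$: Hölder's inequality applied only in the $\theta$ variable gives
$$\|g\|_{L^q(L^r)}\leq \sigma(G_{k,n})^{1/q-1/q'}\,\|g\|_{L^{q'}(L^r)},\qquad q\leq q',$$
and we apply this with $q'=(n-k)p'$ and $g=T_{k,n}f$. The main obstacle I anticipate is justifying the mixed-norm Riesz--Thorin step rigorously, since the ``inner'' measure spaces $(\theta^\perp,d\lambda_{\theta^\perp})$ vary with $\theta$; this can be handled by measurably identifying each $\theta^\perp$ isometrically with $\Bbb R^{n-k}$, turning the target into the standard mixed-norm Lebesgue space $L^q(G_{k,n};L^r(\Bbb R^{n-k}))$, to which the three-lines lemma applies in the usual way.
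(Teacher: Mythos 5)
Your approach coincides with the paper's (one-paragraph) sketch of Theorem \ref{thm1.3}: interpolate the endpoint of Theorem \ref{the1} (upgraded to an $L^{(n+1)/(k+1)}\to L^{n+1}(L^{n+1})$ estimate via the embedding $L^p\hookrightarrow L^{p,q}$ for $q\geq p$) against the trivial $L^1\to L^\infty(L^1)$ bound to cover the critical line $q=(n-k)p'$, and then pass to smaller $q$ via H\"older on the finite-measure base $G_{k,n}$. Your algebraic verification that the interpolated exponents remain on the scaling surface $np^{-1}-(n-k)r^{-1}=k$ and satisfy $q_\vartheta=(n-k)p_\vartheta'$ for all $\vartheta\in[0,1]$ is correct, and your remark about measurably trivializing the family of inner spaces $\theta^\perp\cong\Bbb R^{n-k}$ addresses the one genuine technical point in making the mixed-norm Riesz--Thorin step rigorous.
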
  

\begin{theorem}\label{thm1.4}   
Suppose that $np^{-1} - (n-k)r^{-1}= k, p\leq 2, p<n/k$ and $q\leq (n-k)p'$. 
Then 
$$\|T_{k,n}f\|_{L^{q}(L^{r})} \leq C\|f\|_p.  $$    
\end{theorem}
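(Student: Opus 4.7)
The plan is to use the $TT^*$ method together with interpolation on mixed-norm spaces, taking Theorem \ref{thm1.3} (Drury's conjecture) as an input and introducing a restricted weak-type endpoint at the critical exponent $p = n/k$ to close the argument.

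First I would establish the identity $T_{k,n}^* T_{k,n} f = c\,I_k f$, where $I_k$ is the Riesz potential of order $k$. By definition of the adjoint and the fact that $T_{k,n} f(x,\theta)$ depends on $x$ only through $x_{\theta^\perp}$ (see \eqref{ee1.4}), one has
\[
T_{k,n}^* T_{k,n} f(x) = \int_{G_{k,n}} T_{k,n}f(x,\theta)\,d\sigma(\theta) = \int_{G_{k,n}} \int_\theta f(x+y)\,d\lambda_\theta(y)\,d\sigma(\theta),
\]
to which the polar-coordinates formula (Lemma \ref{L2.3}) applies, giving $c\int f(x+y)|y|^{-(n-k)}\,dy = c'\,(I_k f)(x)$. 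By Hardy--Littlewood--Sobolev at the sharp exponent $p_0 = 2n/(n+k)$ one obtains the $TT^*$ endpoint $\|T_{k,n}f\|_{L^2(\Bbb S)}^2 = c'\langle I_k f, f\rangle \le C\|f\|_{p_0}^2$, i.e.\ $T_{k,n}\colon L^{p_0}\to L^2(L^2)$. Combined with the trivial Fubini endpoint $T_{k,n}\colon L^1\to L^\infty(L^1)$, complex interpolation in mixed-norm spaces covers the range $1\le p\le p_0$.

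To extend up to $p=2$, I would distinguish two cases. If $n\ge 2k+1$, then $(n+1)/(k+1)\ge 2$, so the point $p=2$ already lies in the range of Theorem \ref{thm1.3}; interpolation with the trivial endpoint yields the full range. If $n\le 2k$ (so that $n/k\le 2$ and the hypothesis forces the strict inequality $p<n/k$), Theorem \ref{thm1.3} must be extended beyond $(n+1)/(k+1)$ up to (but not including) $n/k$. For this I would prove a restricted weak-type endpoint $T_{k,n}\colon L^{n/k,1}\to L^{n,\infty}(L^\infty)$ at the critical exponent, via a direct geometric bound on the maximal $k$-dimensional section $\sup_x|(x+\theta)\cap E|$ in terms of $|E|^{k/n}$. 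Real (Marcinkiewicz) interpolation between Drury's strong-type bound at $p=(n+1)/(k+1)$ and this restricted weak-type bound at $p=n/k$ then supplies the strong-type estimates in the intermediate range.

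The main obstacle is the restricted weak-type endpoint at $p=n/k$: this is precisely where Hardy--Littlewood--Sobolev degenerates, so a geometric (or Fourier-analytic) argument is required to handle the critical scaling directly, and this is what necessitates the strict inequality $p<n/k$. A secondary technical point is recovering the full off-diagonal range $q\le (n-k)p'$ rather than only the diagonal case $q=r$; this is handled by vector-valued Riesz--Thorin interpolation on the mixed-norm $L^q(L^r)$ spaces, using the interpolation machinery developed in later sections of the paper.
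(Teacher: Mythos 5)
The paper itself does not supply a proof of Theorem~\ref{thm1.4}; it is recalled from Christ (1984) together with Theorem~\ref{thm1.3}, and only the \emph{necessity} of the three hypotheses is argued in this note. So there is no proof here to compare against, and your proposal has to be judged on its own as a reconstruction.

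Your skeleton is reasonable. The $TT^*$ identity is correct: with adjoint taken between $L^2(\Bbb R^n)$ and $L^2(\Bbb S,d\nu)$ one indeed gets $T_{k,n}^*T_{k,n}f=c\,I_kf$, using the fact that $\int_{G_{k,n}}\int_\theta g\,d\lambda_\theta\,d\sigma=c\int_{\Bbb R^n}g(y)|y|^{k-n}\,dy$. (That single-integral formula is \emph{not} a specialization of Lemma~\ref{L2.3}, which concerns $k$ simultaneous integrals over each $\theta$ and carries the Gram determinant; the one-variable identity is the simpler classical polar formula and follows from ${\rm SO}(n)$-invariance alone.) You also correctly isolate the content: when $n\ge 2k+1$ one has $(n+1)/(k+1)\ge 2>n/k$, so Theorem~\ref{thm1.4} is already contained in Theorem~\ref{thm1.3}; the substance lies in $n\le 2k$, where one must reach $p$ strictly between $(n+1)/(k+1)$ and $n/k$. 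Note, however, that the $TT^*$ step is logically redundant given that you assume Theorem~\ref{thm1.3}: since $2n/(n+k)<(n+1)/(k+1)$, the $L^{2n/(n+k)}\to L^2(L^2)$ bound sits strictly inside Drury's range.

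The gap is where you flag it and it is not small. The restricted weak-type endpoint $T_{k,n}\colon L^{n/k,1}\to L^{n,\infty}(L^\infty)$ is essentially the whole theorem, and the suggested route --- a ``direct geometric bound on the maximal $k$-dimensional section $\sup_x|(x+\theta)\cap E|$ in terms of $|E|^{k/n}$'' --- does not exist pointwise in $\theta$. Take $E$ a slab of unit extent along a fixed $k$-plane $\theta_0$ and thickness $\epsilon$ in the remaining $n-k$ directions: then $\sup_x|(x+\theta_0)\cap E|\sim 1$ while $|E|^{k/n}\sim\epsilon^{k(n-k)/n}\to 0$. The correct assertion is distributional --- one must show $\sigma\{\theta:\sup_x|(x+\theta)\cap E|>\lambda\}\lesssim(|E|^{k/n}/\lambda)^n$ --- and establishing it is the hard part; waving at geometry does not close this. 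A secondary worry is the interpolation: passing from a strong-type bound into $L^{n+1}(L^{n+1})$ and a restricted weak-type bound into $L^{n,\infty}(L^\infty)$ to strong type in the intermediate mixed spaces requires identifying real-interpolation spaces between iterated Lorentz/Lebesgue spaces, which is more delicate than ordinary Marcinkiewicz interpolation; the machinery of Sections~\ref{sec5}--\ref{sec9} is built for the multilinear estimates in Propositions~\ref{prop1.3} and~\ref{p4.2} and does not address mixed-norm targets with $L^\infty$ in the inner slot. As written, the proposal identifies the right landmarks but leaves the decisive step unproved.
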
  
Theorem \ref{the1} implies 
Theorem \ref{thm1.3} for $p=(n+1)/(k+1)$, $r=n+1$ and $q=n+1$, from which 
and the obvious $L^1$ result  
the other estimates of Theorem \ref{thm1.3} follow 
 by interpolation. It is known that the following conditions are 
necessary. 
\begin{enumerate} 
\item $np^{-1} - (n-k)r^{-1}= k$ in Theorems \ref{thm1.3} and \ref{thm1.4}; 
\item $q\leq (n-k)p'$ in Theorems \ref{thm1.3} and \ref{thm1.4}; 
\item $p<n/k$ in Theorem \ref{thm1.4}.   
\end{enumerate}  
Here we give proofs for the necessity. 
\begin{proof}[Proof for part $(1)$]   Let $f^{(\delta)}(x)=f(\delta x)$ for 
$\delta>0$.  Then $T_{k,n}f^{(\delta)}(x, \theta)
=\delta^{-k}T_{k,n}f(\delta x, \theta)$,  and so  
$\|T_{k,n}f^{(\delta)}\|_{L^{q}(L^{r})}=\delta^{-k}\delta^{-(n-k)/r}
\|T_{k,n}f\|_{L^{q}(L^{r})}$. On the other hand,  $\|f^{(\delta)}\|_p=
\delta^{-n/p}\|f\|_p$. Thus, if $\|T_{k,n}f^{(\delta)}\|_{L^{q}(L^{r})}
\lesssim \|f^{(\delta)}\|_p$, we have 
$$\delta^{-k}\delta^{-(n-k)/r}\|T_{k,n}f\|_{L^{q}(L^{r})}\lesssim 
\delta^{-n/p}\|f\|_p $$   
for all $\delta>0$, which implies $np^{-1} - (n-k)r^{-1}= k$. 
\end{proof} 
\begin{proof}[Proof for part $(2)$]  
We write $x=(x^{(1)}, x^{(2)})$, $x^{(1)}\in \Bbb R^k$, $x^{(2)}\in 
\Bbb R^{n-k}$ for $x\in \Bbb R^n$. Define 
$$B^{(1)}=\{x^{(1)}\in \Bbb R^k: |x^{(1)}|\leq 1\}, \quad 
B^{(2)}_\epsilon =\{x^{(2)}\in \Bbb R^{n-k}: |x^{(2)}|\leq \epsilon\},  $$ 
where $\epsilon$ is a small positive number less than $1$.  
Let $T_\epsilon=B^{(1)}\times 
B^{(2)}_\epsilon$. Following \cite{Ch}, we consider the characteristic function  $$\chi_{T_\epsilon}(x^{(1)}, x^{(2)})= \chi_{B^{(1)}}(x^{(1)})
\chi_{B^{(2)}_\epsilon}(x^{(2)}).  $$
Let 
$$P_0=\{(x^{(1)},0): x^{(1)}\in \Bbb R^k\}. $$  
\par 
We can see that there exists an $\epsilon$-neighborhood $N_\epsilon$ of $P_0$ in $G_{k,n}$, which is a $k(n-k)$ dimensional manifold, such that  
if $\theta \in N_\epsilon$, $y\in \theta$ and $|y|\leq 1/2$, then  
$|y^{(1)}|\leq 1/2$, $y^{(2)}\in B^{(2)}_{\epsilon/2}$ and 
$\sigma(N_\epsilon) \gtrsim \epsilon^{k(n-k)}$   
(see \eqref{sur2.1} below in Section \ref{sec2} for the dimensionality of 
$G_{k,n}$; also, the formula in \eqref{eee2.3} below is helpful).  
For any $\theta\in G_{k,n}$, it is obvious that if $x\in \theta^\perp$ and $|x|\leq \epsilon/2$, 
then $|x^{(1)}|\leq 1/2$ and $x^{(2)}\in B^{(2)}_{\epsilon/2}$.  
Therefore, if $\theta \in N_\epsilon$, $y\in \theta$,  $|y|\leq 1/2$, 
$x\in \theta^\perp$ and $|x|\leq \epsilon/2$, then 
$x^{(1)}+ y^{(1)} \in B^{(1)}$ and $x^{(2)}+ y^{(2)} \in B^{(2)}_\epsilon$,  
and so $x+y \in T_\epsilon$.  Thus  
$$ \int \left|\int \chi_{T_\epsilon}(x+y)\, d\lambda_{\theta}(y)\right|^r \, 
d\lambda_{\theta^\perp}(x) \geq  
\int_{|x|\leq \epsilon/2} \left|\int_{|y|\leq 1/2} \, d\lambda_{\theta}(y)
\right|^r \, d\lambda_{\theta^\perp}(x)
\gtrsim \epsilon^{(n-k)}$$  
for all $\theta \in N_\epsilon$, and hence 
\begin{align}\label{sur1}
\|T_{k,n}\chi_{T_\epsilon}\|_{L^{q}(L^{r})}&\geq  
\left(\int_{N_\epsilon} \left(\left|\int \chi_{T_\epsilon}(x+y)\, 
d\lambda_{\theta}(y)\right|^r \, d\lambda_{\theta^\perp}(x)\right)^{q/r} 
\, d\sigma(\theta)\right)^{1/q} 
\\ 
&\gtrsim   \epsilon^{(n-k)/r} \sigma(N_\epsilon)^{1/q} 
\gtrsim   \epsilon^{(n-k)/r} \epsilon^{k(n-k)/q}.     \notag 
\end{align} 
On the other hand, we see that 
\begin{equation}\label{sur2}
\|\chi_{T_\epsilon}\|_p \sim \epsilon^{(n-k)/p}. 
\end{equation} 
Thus if 
$\|T_{k,n}\chi_{T_\epsilon}\|_{L^{q}(L^{r})} \lesssim \|\chi_{T_\epsilon}\|_p $, by \eqref{sur1} and \eqref{sur2} the quantity 
$$\epsilon^{\frac{n-k}{r} + \frac{k(n-k)}{q}-\frac{n-k}{p}}
=\epsilon^{\frac{k(n-k)}{q}-\frac{k}{p'}} $$
remains bounded as $\epsilon \to 0$, where we have used the equation in 
part (1). This implies that $k(n-k)q^{-1}-k(p')^{-1}\geq 0$, which is 
equivalent to what we need. 
\end{proof} 
\begin{proof}[Proof for part $(3)$]  
Let 
$$f(x)=(1+|x|)^{-k}\left(\log(2+|x|)\right)^{-\delta},  $$ 
where $1/p \leq k/n < \delta<1$.  Then, $f\in L^p(\Bbb R^n)$, since 
$kp\geq n$ and $p\delta>1$. Also, we have $T_{k,n}f(x,\theta)=\infty$ for 
all $x, \theta$, since $\delta <1$. 
So we need to have $p<n/k$ if $T_{k,n}$ is bounded from 
$L^p$ to the space $L^q(L^r)$.  

\end{proof} 
\par 
We shall prove Theorem \ref{the1} in Section \ref{sec3} 
assuming Theorem \ref{the2}, 
which will be shown in Section \ref{sec4}. 
In Section \ref{sec2}, we give some lemmas for the proofs of the theorems 
including a formula in Lemma \ref{L2.3} related to a result in \cite{D}. 
Theorems \ref{the1} and \ref{the2} follow from multilinear estimates in 
Propositions \ref{prop1.3} and \ref{p4.1} below, respectively. 
Results in interpolation arguments needed to prove Propositions \ref{prop1.3} 
and \ref{p4.1} will be provided in Sections \ref{sec5}--\ref{sec9}.  
In Sections \ref{j8intro}--\ref{j8cal}, 
we shall prove under a certain condition  
the existence of an invariant measure in a homogeneous space of a Lie group.  
An application of this to the Grassmann manifolds will be given in 
Section \ref{sec14}.

\section{Some results related to the proofs of Theorems \ref{the1} 
and \ref{the2} }\label{sec2}

Let $v_1, \dots, v_k$ be vectors in $\Bbb R^n$. The Gram matrix 
$G_0(v_1, v_2, \dots, v_k)$ is defined to be the $k\times k$ matrix whose 
$(\ell,m)$ component is given by $\langle v_\ell, v_m \rangle:$ 
$$G_0(v_1, v_2, \dots, v_k)=
\begin{pmatrix} 
\langle v_1, v_1\rangle & \dots       &  \langle v_1, v_k\rangle \\
\hdotsfor{3}      \\ 
\hdotsfor{3}      \\ 
\langle v_k, v_1\rangle & \dots  &    \langle v_k, v_k\rangle
\end{pmatrix}. 
$$    
Also, the Gram determinant $G(v_1, v_2, \dots, v_k)$ is defined as 
$$G(v_1,v_2 \dots, v_k)=\det G_0(v_1,v_2 \dots, v_k).   $$  
Then, $G\geq 0$ and $G^{1/2}$ is the $k$-dimensional Lebesgue measure of 
the parallelepiped determined by $v_1, \dots , v_k$ 
(see \cite[1.4, 1.5]{KP}). 
It is known that 
$$G(v_1, v_2, \dots, v_k)=G(v_{\tau(1)}, v_{\tau(2)}, \dots, v_{\tau(k)})
$$ 
for every permutation $\tau$ of $\{1,2,\dots, k\}$ and 
$$G(\alpha_1v_1, \alpha_2v_2, \dots, \alpha_kv_k)
=\alpha_1^2\alpha_2^2\dots \alpha_k^2G(v_{1}, v_{2}, \dots, v_{k})
$$ 
for any $\alpha_1, \alpha_2, \dots , \alpha_k\in \Bbb R$.  Also, 
$$G(Uv_1, Uv_2, \dots, Uv_k)
=G(v_{1}, v_{2}, \dots, v_{k})
$$ 
for every $U\in {\rm O}(n)$ (the orthogonal group).   
If $v_i=(v_{i1}, v_{i2} \dots  , v_{in})$, $1\leq i \leq k$,  
it is known that     
\begin{equation*} 
G(v_1, \dots, v_k)=\sum_{1\leq j_1<j_2< \dots <j_k\leq n}\begin{vmatrix} 
 v_{1,j_1} & v_{1,j_2}  &\dots       &  v_{1,j_k} \\ 
 v_{2,j_1} & v_{2,j_2}  &\dots       &  v_{2,j_k} \\ 
\hdotsfor{4}      \\ 
\hdotsfor{4}      \\ 
 v_{k,j_1} & v_{k,j_2}  &\dots       &  v_{k,j_k} \\ 
\end{vmatrix}^2, 
\end{equation*}
where  the summation is over all $J=\{j_1, j_2, \dots, j_k\}$ in the 
set $\{J: J\subset \{1, 2, \dots, n\}, \card J=k\}$ of cardinality 
$\binom{n}{k}=n!/(k!(n-k)!)$. 
\par 
We recall the following result, which I have learned from Drury 
\cite{D}; a closely related result of Blaschke 
can be found in \cite[Chap. 12]{Sant} (see 
\cite[pp. 371--372]{D2}). 

\begin{lemma}\label{L1.1}  We have  
$$d\lambda_{P(\pi)}(x_0) \dots  d\lambda_{P(\pi)}(x_k)\, d\nu(\pi)=
c|G(x_1-x_0, \dots, x_k-x_0)|^{(k-n)/2} \, dx_0\, dx_1 \dots dx_k   $$ 
with a positive constant $c$,   
where each of 
$dx_0, \dots, dx_k$ is the Lebesgue measure on $\Bbb R^n$, and the equation 
means that 
\begin{multline*} 
\int_{\Bbb R^{n(k+1)}\times \Bbb S}F(x_0, \dots, x_k, x,\theta) \, 
d\lambda_{P(x,\theta)}(x_0)\dots d\lambda_{P(x,\theta)}(x_k)\, 
d\nu(x,\theta)\\ 
=c \int_{\Bbb R^{n(k+1)}} F(x_0, \dots, x_k, 
(x_0)_{\theta(x_1-x_0, \dots, x_k-x_0)^\perp}, \theta(x_1-x_0, 
\dots, x_k-x_0))
\\ 
\times |G(x_1-x_0, \dots, x_k-x_0)|^{(k-n)/2} \, dx_0\, dx_1 \dots dx_k,  
\end{multline*} 
where $F$ is an appropriate function and 
$\theta(v_1, \dots, v_k)$ is the element in $G_{k,n}$ which contains  
$v_1, \dots, v_k$. 
\end{lemma}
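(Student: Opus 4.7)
The plan is to reduce Lemma \ref{L1.1} to Lemma \ref{L2.3}, the Drury polar-coordinates formula for $(\Bbb R^n)^k$ alluded to in the introduction. The reduction rests only on translation invariance of Lebesgue measure and the orthogonal decomposition $\Bbb R^n = \theta \oplus \theta^\perp$; once Lemma \ref{L2.3} is granted, the rest of Lemma \ref{L1.1} is essentially bookkeeping.

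First I would make the substitution $v_i = x_i - x_0$ for $i = 1, \dots, k$ with $x_0$ held fixed; the Jacobian equals $1$, so $dx_0\,dx_1 \cdots dx_k = dx_0\,dv_1 \cdots dv_k$, while $G(x_1-x_0, \dots, x_k-x_0) = G(v_1, \dots, v_k)$. To the factor $dv_1 \cdots dv_k$ I would then apply Lemma \ref{L2.3}, whose content (generically in $(v_1, \dots, v_k)$) is the polar-coordinate expression
\begin{equation*}
dv_1 \cdots dv_k = c_1 |G(v_1, \dots, v_k)|^{(n-k)/2}\, d\lambda_\theta(v_1) \cdots d\lambda_\theta(v_k)\, d\sigma(\theta),
\end{equation*}
where $\theta = \theta(v_1, \dots, v_k)$ is the $k$-plane spanned by $v_1, \dots, v_k$. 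Combining these two steps gives
\begin{equation*}
dx_0\,dx_1 \cdots dx_k = c_1 |G|^{(n-k)/2}\, dx_0\, d\lambda_\theta(v_1) \cdots d\lambda_\theta(v_k)\, d\sigma(\theta).
\end{equation*}

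Next, with $\theta$ already in hand, I would split $x_0 = y + x$ with $y = (x_0)_\theta \in \theta$ and $x = (x_0)_{\theta^\perp} \in \theta^\perp$, so that $dx_0 = d\lambda_\theta(y)\,d\lambda_{\theta^\perp}(x)$ and $(x,\theta) \in \Bbb S$. The affine $k$-plane $P(x,\theta) = x + \theta$ contains $x_0 = x + y$ and every $x_i = x_0 + v_i = x + (y + v_i)$ with $y + v_i \in \theta$. Because translation by $x_0$ preserves the $k$-dimensional Lebesgue measure and carries $\theta$ onto $P(x,\theta)$, we have the identifications $d\lambda_\theta(y) = d\lambda_{P(x,\theta)}(x_0)$ and $d\lambda_\theta(v_i) = d\lambda_{P(x,\theta)}(x_i)$ for $i = 1, \dots, k$. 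Inserting these into the previous display and recalling $d\nu(\pi) = d\lambda_{\theta^\perp}(x)\, d\sigma(\theta)$ yields
\begin{equation*}
dx_0\,dx_1 \cdots dx_k = c_1 |G(x_1-x_0, \dots, x_k-x_0)|^{(n-k)/2}\, d\lambda_{P(\pi)}(x_0)\cdots d\lambda_{P(\pi)}(x_k)\, d\nu(\pi),
\end{equation*}
and solving for the right-hand side (with $c = c_1^{-1}$ and $(n-k)/2 = -(k-n)/2$) finishes the proof.

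The main obstacle is Lemma \ref{L2.3} itself, which is the real heart of the matter; once it is in hand, Lemma \ref{L1.1} is a formal consequence of translation invariance and the orthogonal splitting $\Bbb R^n = \theta \oplus \theta^\perp$. Proving Lemma \ref{L2.3} is the genuinely computational step: it requires choosing graph-type local coordinates on $G_{k,n}$, as in Boothby and Spivak, and computing the Jacobian of the resulting parameterization of $(\Bbb R^n)^k$ by $(\theta, v_1|_\theta, \dots, v_k|_\theta)$, where the power $|G|^{(n-k)/2}$ arises from the Gram matrix of the $n-k$ transverse directions describing the tilt of $\theta$ away from a reference $k$-plane.
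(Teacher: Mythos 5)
Your proposal is correct and takes essentially the same route as the paper: both reduce Lemma \ref{L1.1} to Lemma \ref{L2.3} as the computational core, combined with translation invariance of Lebesgue measure and the orthogonal splitting $dx_0 = d\lambda_\theta(y)\,d\lambda_{\theta^\perp}(x)$. The only presentational difference is that the paper packages the orthogonal splitting as a separate auxiliary result (Lemma \ref{L1.2}) and works from the $\Bbb S$-side integral toward the Euclidean side, whereas you work inline and in the reverse direction; the ingredients and their roles coincide exactly.
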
  
To prove this, we first show the following (see \cite[Theorem 3.4]{So}).  

\begin{lemma} \label{L1.2}
Let $\pi=(y,\theta)\in \Bbb S$, $x\in \Bbb R^n$. Define a measure 
$d\mu_x(\pi)$ on $\Bbb S$  by 
$$\int_{\Bbb S} F(\pi)\, d\mu_x(\pi)= \int_{\Bbb S} F(y,\theta)\, 
d\mu_x(y,\theta) =\int_{G_{k,n}} F(x_{\theta^\perp}, \theta)
\, d\sigma(\theta).  $$  
Then $ d\mu_x(\pi)\, dx=d\lambda_{P(\pi)}(x)\, d\nu(\pi)$ on 
$\Bbb R^n\times \Bbb S$, in the sense that 
 $$\int_{\Bbb S}\int_{\Bbb R^n} F(x,y,\theta)\, d\lambda_{P(y,\theta)}(x)\, 
d\nu(y,\theta)= 
\int_{\Bbb R^n}\int_{\Bbb S} F(x,y,\theta) \, d\mu_x(y,\theta)\, dx.  $$
\end{lemma}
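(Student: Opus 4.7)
The plan is to unpack both sides of the claimed identity and reduce it to the elementary fact that, for any $\theta \in G_{k,n}$, Lebesgue measure on $\mathbb{R}^n$ factors as the product of the $k$-dimensional Lebesgue measure on $\theta$ and the $(n-k)$-dimensional Lebesgue measure on $\theta^\perp$ under the orthogonal decomposition $\mathbb{R}^n = \theta \oplus \theta^\perp$.

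First I would expand the left-hand side using the definitions \eqref{def2} and \eqref{def3}:
\begin{equation*}
\text{LHS} = \int_{G_{k,n}} \int_{\theta^\perp} \int_{\theta} F(y+z, y, \theta)\, d\lambda_\theta(z)\, d\lambda_{\theta^\perp}(y)\, d\sigma(\theta).
\end{equation*}
For each fixed $\theta$, the map $\Phi_\theta : \theta^\perp \times \theta \to \mathbb{R}^n$ defined by $\Phi_\theta(y,z) = y+z$ is a linear isometry (both sides carry the inherited Euclidean structure), so by Fubini and the change-of-variables formula the inner double integral equals
\begin{equation*}
\int_{\mathbb{R}^n} F(x, x_{\theta^\perp}, \theta)\, dx,
\end{equation*}
where I use that $\Phi_\theta^{-1}(x) = (x_{\theta^\perp}, x_\theta)$ and that the Jacobian is $1$.

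Next I would interchange the remaining two integrations by Fubini's theorem, which is justified for $F$ in a suitable class (say continuous with compact support, or nonnegative measurable), to obtain
\begin{equation*}
\text{LHS} = \int_{\mathbb{R}^n}\left(\int_{G_{k,n}} F(x, x_{\theta^\perp}, \theta)\, d\sigma(\theta)\right)dx.
\end{equation*}
Finally, by the very definition of $d\mu_x$ in the statement of the lemma, the inner integral is precisely $\int_{\mathbb{S}} F(x, y, \theta)\, d\mu_x(y,\theta)$, which matches the right-hand side.

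There is no real obstacle here; the content is just the orthogonal-decomposition factorization of $dx$ and an application of Fubini. The only point worth care is noting that $\Phi_\theta$ is a genuine isometry (so the Jacobian determinant is $1$), and verifying measurability of $\theta \mapsto F(x, x_{\theta^\perp}, \theta)$ in order to legitimately invoke Fubini with respect to $d\sigma \otimes dx$; both follow from continuity of the orthogonal projection $x \mapsto x_{\theta^\perp}$ in $(x,\theta)$.
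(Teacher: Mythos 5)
Your proposal is correct and follows essentially the same route as the paper: both expand the left-hand side using the definitions of $d\nu$ and $d\lambda_{P(y,\theta)}$, observe that $y=(y+z)_{\theta^\perp}$ so the inner two integrals reconstitute Lebesgue measure via the orthogonal decomposition $\mathbb{R}^n = \theta \oplus \theta^\perp$, and then apply Fubini and the definition of $d\mu_x$. Your remark that $\Phi_\theta$ is a linear isometry with unit Jacobian is just an explicit justification of the factorization $dx = d\lambda_\theta(z)\, d\lambda_{\theta^\perp}(y)$ that the paper states directly.
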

\begin{proof}  Since $y=(y+z)_{\theta^\perp}$ if $y\in \theta^\perp$ and 
$z\in \theta$,  we have 
\begin{align*}  
&\int_{\Bbb R^n\times \Bbb S} F(x,y,\theta)\, d\lambda_{P(y,\theta)}(x)\, 
d\nu(y,\theta)
= \int_{\Bbb S}\int_{z\in \theta} F(y+z, y,\theta)\, d\lambda_\theta(z)
\, d\nu(y,\theta) 
\\ 
&=\int_{G_{k,n}}\int_{y\in  \theta^\perp}\int_{z\in \theta} 
F(y+z, (y+z)_{\theta^\perp}, \theta)\, d\lambda_\theta(z)
\, d\lambda_{\theta^\perp}(y)\, d\sigma(\theta) 
\\ 
&=\int_{G_{k,n}}\int_{\Bbb R^n} F(x, x_{\theta^\perp},\theta)
\ dx\, d\sigma(\theta)  
\\ 
&=\int_{\Bbb R^n}\int_{G_{k,n}} F(x,x_{\theta^\perp}, \theta) 
\, d\sigma(\theta)\, dx 
\\ 
&=\int_{\Bbb R^n}\int_{\Bbb S} F(x,y,\theta) \, d\mu_x(y,\theta)\, dx,   
\end{align*}
where the third equality holds since the Lebesgue measure $dx$ on $\Bbb R^n$ 
can be decomposed as $dx=d\lambda_\theta(z)\, d\lambda_{\theta^\perp}(y)$ 
for every $\theta\in G_{k,n}$: 
$$\int f(z+y)\, d\lambda_\theta(z)\, d\lambda_{\theta^\perp}(y)
=\int_{\Bbb R^n} f(x)\, dx.$$ 
\end{proof} 

\begin{proof}[Proof of Lemma $\ref{L1.1}$]  
Let $(x,\theta)\in \Bbb S$. 
\begin{align*}  
I&:=\int F(x_0,x_1,\dots, x_k,x,\theta) d\lambda_{P(x,\theta)}(x_1)
\dots \, d\lambda_{P(x,\theta)}(x_k)
\\ 
&=\int_{y_1\in \theta}\dots  \int_{y_k\in \theta} 
 F(x_0,x+ y_1,\dots, x+y_k,x,\theta)\, 
 d\lambda_\theta(y_1)\dots \, d\lambda_\theta(y_k) 
\end{align*}  
Therefore, by Lemma \ref{L1.2}, we have 
\begin{align*}  
&\int I\, d\lambda_{P(x,\theta)}(x_0)\, d\nu(x, \theta) =\int I
\, d\mu_{x_0}(x,\theta)\, dx_0 
\\ 
&=\int F(x_0, (x_0)_{\theta^\perp}+y_1, \dots,  (x_0)_{\theta^\perp}+y_k, 
 (x_0)_{\theta^\perp}, \theta)
\\ 
&\qquad\qquad\qquad\qquad\qquad\qquad\qquad \qquad\qquad\,  
 d\lambda_\theta(y_1)\dots \, d\lambda_\theta(y_k) \, d\sigma(\theta) dx_0 
\\ 
&=\int F(x_0, x_0+y_1, \dots,  x_0+y_k, (x_0)_{\theta^\perp}, \theta)\,  
 d\lambda_\theta(y_1)\dots \, d\lambda_\theta(y_k) \, d\sigma(\theta) dx_0
 \\ 
&=:I_1, 
\end{align*}  
where the penultimate equality follows from \eqref{ee1.4}.   
\par 
To complete the proof of Lemma \ref{L1.1}, we apply the following result.

\begin{lemma}\label{L2.3} Let $1\leq k< n$.  Then we have 
\begin{multline*} 
\int_{G_{k,n}}\int_{\theta^k} f(y_1, y_2, \dots, y_k, \theta)
\, d\lambda_{\theta}(y_1)
\, d\lambda_{\theta}(y_2)\dots \lambda_{\theta}(y_k)
\, d\sigma(\theta) 
\\ 
= c \int_{\Bbb R^{nk}}f(x_1, x_2, \dots, x_k, \theta(x_1, x_2, \dots , x_k)
)\left|G(x_1, x_2, \dots, x_k)\right|^{(k-n)/2} \, dx_1\, dx_2 
\dots dx_k.  
\end{multline*}
\end{lemma}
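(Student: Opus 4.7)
The strategy is to reduce both sides to integrals over $\Bbb R^{nk}$ via local coordinates on $G_{k,n}$, and then match integrands through a Jacobian computation. Fix the reference plane $\theta_0=\Bbb R^k\times\{0\}^{n-k}$ and let $U\subset G_{k,n}$ be the open dense set of $\theta$ for which the orthogonal projection $\theta\to\theta_0$ is bijective. Each $\theta\in U$ is the graph of a unique linear map encoded by an $(n-k)\times k$ matrix $A$, so that $\theta=\theta(A):=\{(v,Av):v\in\Bbb R^k\}$; denote the affine isomorphism $\Psi_A:\Bbb R^k\to\theta(A)$, $v\mapsto(v,Av)$. This is the chart mentioned in the introduction (Boothby, Spivak), and the complement of $U$ is negligible for both sides.

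The first task is to write $d\lambda_\theta$ and $d\sigma$ in this chart. Since the Gram matrix of $\Psi_A$ is $I+A^TA$, one gets $\Psi_A^{\ast}d\lambda_\theta=\det(I+A^TA)^{1/2}\,dv$. For the invariant measure, I would establish the local expression $d\sigma(\theta)=c_0\det(I+A^TA)^{-n/2}\,dA$ for some $c_0>0$, either by checking compatibility with the transition maps between charts at different reference planes, or by verifying left invariance under the transitive ${\rm SO}(n)$-action and invoking uniqueness up to scale.

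Next I introduce the global change of variables $\Phi:(v_1,\dots,v_k,A)\mapsto(x_1,\dots,x_k)$ defined by $x_i=\Psi_A(v_i)=(v_i,Av_i)$, writing $x_i=(x_i^{(1)},x_i^{(2)})$ with $x_i^{(1)}\in\Bbb R^k$, $x_i^{(2)}\in\Bbb R^{n-k}$. Grouping the output coordinates as $(x_1^{(1)},\dots,x_k^{(1)},x_1^{(2)},\dots,x_k^{(2)})$ and the input as $(v_1,\dots,v_k,A)$, the derivative of $\Phi$ is block triangular: the upper-left block $\partial x^{(1)}/\partial v$ is the identity, and the lower-right block $\partial x^{(2)}/\partial A$ equals $V^T\otimes I_{n-k}$, where $V=[v_1\mid\cdots\mid v_k]$. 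Hence $|\det\Phi'|=|\det V|^{n-k}$. The key algebraic identity
\[
G(x_1,\dots,x_k)=\det\bigl(V^T(I+A^TA)V\bigr)=(\det V)^2\det(I+A^TA)
\]
then yields $|\det V|^{n-k}\det(I+A^TA)^{(n-k)/2}=G(x_1,\dots,x_k)^{(n-k)/2}$.

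Finally I assemble the pieces. The left-hand side integrand pulls back, via $\Psi_A$ applied to each factor and the chart $A\mapsto\theta(A)$, to
\[
c_0\,\det(I+A^TA)^{k/2}\det(I+A^TA)^{-n/2}\,dv_1\cdots dv_k\,dA=c_0\,\det(I+A^TA)^{-(n-k)/2}\,dv_1\cdots dv_k\,dA,
\]
and pushing this forward through $\Phi$ converts it into $c_0\,G(x_1,\dots,x_k)^{-(n-k)/2}\,dx_1\cdots dx_k$, which is the right-hand side. The principal computational hurdles are the Kronecker-product bookkeeping for $|\det\Phi'|=|\det V|^{n-k}$ together with the Gram-determinant identity; once these are in hand, the cancellation of $\det(I+A^TA)$-powers is automatic. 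A secondary point is to check that the exceptional set where $V$ is singular or $\theta(x_1,\dots,x_k)\notin U$ is a lower-dimensional algebraic subvariety of $\Bbb R^{nk}$, hence of Lebesgue measure zero and safely ignored.
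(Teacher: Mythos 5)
Your outline is structurally correct and the change-of-variables bookkeeping checks out: the Gram matrix of $\Psi_A$ is indeed $I+A^TA$, the Jacobian of $\Phi$ is indeed $|\det V|^{n-k}$ by the block-triangular/Kronecker-product argument, the identity $G(x_1,\dots,x_k)=(\det V)^2\det(I+A^TA)$ is right, and the powers cancel exactly as you claim. Where you and the paper genuinely diverge is in how the invariant measure $d\sigma$ enters. You \emph{assume} the local formula $d\sigma=c_0\det(I+A^TA)^{-n/2}\,dA$ and defer its proof with a one-line remark ("I would establish \dots either by checking transition maps or by verifying invariance"). The paper never writes down this formula. Instead it decomposes $d\sigma=\sum_j d\mu_j$ via a partition of unity with \emph{unknown} local densities $\rho_j$, pushes each $d\mu_j\,d\lambda_\theta^k$ through the change of variables to obtain a factor $\rho(y)=\sum_j\widetilde\rho_j(y)$ in front of $|G(y_1,\dots,y_k)|^{(k-n)/2}$, and only at the very end shows $\rho$ is a constant. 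That last step uses two observations: (i) $\rho$ depends only on the subspace $\theta(y_1,\dots,y_k)$ because the reconstructed chart coordinates $a_{ij}(y)$ are invariant under left multiplication $[y]\mapsto\alpha[y]$, and (ii) comparing the change-of-variables formula applied to $f$ and to $f(U\cdot,U\cdot)$ for $U\in{\rm SO}(n)$, together with the ${\rm SO}(n)$-invariance of $d\sigma$ and the ${\rm O}(n)$-invariance of $G$, forces $\rho(Uy)=\rho(y)$; since ${\rm SO}(n)$ acts transitively on orthonormal $k$-frames, $\rho$ must be constant.

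The upshot: what you label a "secondary point" and postpone is precisely the nontrivial content that the paper's argument is designed to avoid having to prove in advance. Establishing $d\sigma=c_0\det(I+A^TA)^{-n/2}\,dA$ by direct verification of ${\rm SO}(n)$-invariance (or by matching transition maps) is a real computation, comparable in weight to the paper's $\rho$-is-constant argument; it is not something that can be waved at. If you fill it in, your proof is complete and is arguably more transparent for a reader who already knows the Grassmannian density formula, whereas the paper's route is self-contained and derives that formula as a byproduct. You should either carry out one of your two proposed verifications, or else reorganize along the paper's lines: work with an arbitrary smooth density, push through the Jacobian and Gram computations you already have, and then prove constancy of the resulting factor using the defining invariance of $d\sigma$ rather than a formula for it.
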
 
By Lemma \ref{L2.3}, 
\begin{align*}  
I_1&=
c\int_{\Bbb R^{n(k+1)}} F(x_0, x_0+x_1, \dots,  x_0+x_k, 
 (x_0)_{\theta(x_1,\dots, x_k)^\perp}, \theta(x_1,\dots ,x_k)) 
 \\ 
 &\quad \times  \left|G(x_1, x_2, \dots, x_k)\right|^{(k-n)/2}
 \,  dx_1\dots \, dx_k\, dx_0  
 \\ 
&= c\int_{\Bbb R^{n(k+1)}} F(x_0, x_1, \dots, x_k, 
 (x_0)_{\theta(x_1-x_0,\dots, x_k-x_0)^\perp}, \theta(x_1-x_0,\dots ,x_k-x_0)) 
 \\ 
 &\quad \times  \left|G(x_1-x_0, x_2-x_0, \dots, x_k-x_0)
 \right|^{(k-n)/2}
 \, dx_0 \,  dx_1\dots \, dx_k,  
\end{align*}  
which completes the proof. 
\end{proof} 
\begin{proof}[Proof of Lemma $\ref{L2.3}$]  
Here we consider  the Grassmann manifold $G_{k,n}$ defined as in 
\cite[pp. 63--65]{Bo} (see also \cite[Section 2 of Chapter 13]{Sp}).  
The dimension of $G_{k,n}$ is $k(n-k)$ and the invariant measure on $G_{k,n}$ 
is realized as a measure based on 
 the absolute value of a $k(n-k)$ local differential form on 
$G_{k,n}$ (see Remark \ref{re14.2} below).  
By a suitable partition of unity we can write $d\sigma=\sum d\mu_j$, 
where $d\mu_j$ can be expressed by using local coordinates as 
$$d\mu_j(\theta)=\rho_j(x_1(\theta), \dots, x_{k(n-k)}(\theta))\, 
\left|(dx_1)_\theta \wedge \dots \wedge (dx_{k(n-k)})_\theta\right|,  
 $$ 
 where $\rho_j$ is compactly supported, non-negative, continuous function. 
We also write 
$$ d\mu_j= \rho_j(x_1, \dots, x_{k(n-k)})\, 
dx_1 \dots dx_{k(n-k)}. $$
 We rewrite this using $a_{1,k+1}, \dots, a_{k,k+1}, 
  a_{1,k+2}, \dots, a_{k,k+2}, \dots, a_{1,n}, \dots, a_{k,n}$ as  
$$ d\mu_j= \rho_j(a_{1,k+1}, \dots, a_{k,n})\, 
da_{1,k+1}\dots  da_{k,n}.$$  
Fix $j_0$ and consider $d\mu_{j_0}$.  
Let 
\begin{align*} 
v_1&=(1,0, \dots , 0,a_{1,k+1}, \dots, a_{1,n}), 
\\ 
v_2&=(0,1, \dots , 0,a_{2,k+1}, \dots, a_{2,n}), 
\\ 
&\, \, \,  \vdots 
\\ 
v_k&=(0, \dots , 0,1,a_{k,k+1}, \dots, a_{k,n}).   
\end{align*}  
We now assume that $\Bbb R^{k(n-k)}$ and  the local coordinates on $G_{k,n}$  
are related by the injection 
\begin{multline}\label{sur2.1} 
(a_{1,k+1}, \dots, a_{k,k+1}, 
  a_{1,k+2}, \dots, a_{k,k+2}, \dots, a_{1,n}, \dots, a_{k,n}) 
\\ 
  \longmapsto 
  \theta=\mathop{\mathrm{sp}}\{v_1, v_2, \dots, v_k\}\in G_{k,n},   
\end{multline} 
where 
$\mathop{\mathrm{sp}}\{v_1, v_2, \dots, v_k\}$ denotes the subspace of  
$\Bbb R^n$ generated by $\{v_1, v_2, \dots, v_k\}$.   
Put 
$$y_\ell=\sum_{i=1}^k s_i^\ell v_i=  
\left(s_1^\ell, s_2^\ell, \dots, s_k^\ell, \sum_{i=1}^k s_i^\ell a_{i, k+1}, 
\dots, \sum_{i=1}^k s_i^\ell a_{i, n}\right) $$
for $\ell=1, 2, \dots, k$.  
Let 
$$E_k=\begin{pmatrix} 
1 &  &        & 0 \\
& 1  &        &    &       \\ 
& & \ddots    &    &       \\ 
 0 &   &      &    1
\end{pmatrix}
 $$  
 be the $k\times k$ unit matrix and define a $k\times (n-k)$ matrix 
 $A_{k,n-k}$ by 
$$A_{k,n-k}=\begin{pmatrix} 
a_{1,k+1} & \dots       & a_{1,n} \\
\hdotsfor{3}      \\ 
\hdotsfor{3}      \\ 
 a_{k,k+1} & \dots  &       a_{k,n}
\end{pmatrix}. 
 $$
We note that 
$$\begin{pmatrix} 
v_1 \\
v_2 \\ 
\vdots \\ 
v_k
\end{pmatrix} 
= (E_k, A_{k,n-k}). 
 $$  
\par 
Let 
$$u^\ell=(s_1^\ell, s_2^\ell, \dots, s^\ell_k) $$  
be the $k$ dimensional row vector and 
$$0_k=(0,0, \dots, 0) $$ 
be the $k$ dimensional row zero vector. 
We define $n-k$ numbers of $k(n-k)$ row vectors by 
\begin{align*} 
S^\ell_1&=(u^\ell, 0_k, \dots, 0_k), 
\\ 
S^\ell_2&=(0_k, u^\ell, 0_k, \dots, 0_k), 
\\ 
&\vdots 
\\ 
S^\ell_{n-k}&=(0_k, 0_k, \dots, 0_k, u^\ell). 
\end{align*}
Define the $(n-k)\times k(n-k)$ matrix by 
$$B^\ell=\begin{pmatrix} 
S^\ell_1 \\
S^\ell_2 \\ 
\vdots \\ 
S^\ell_{(n-k)}  
\end{pmatrix}  
 $$  
for $\ell =1, 2, \dots, k$. 
Let 
$$a^j= (a_{1,k+j},\dots,  a_{k,k+j}), \quad j=1,2, \dots, n-k. $$ 
Consider the Jacobian 
\begin{equation}\label{eeee2.1}  
A= \frac{\partial(y_1, y_2, \dots, y_k)}
{\partial(u^1, u^2, \dots, u^k, a^1, a^2, \dots, a^{n-k})}.   
\end{equation}
Then $A$ is equal to the determinant of the $nk\times nk$ matrix: 
$$\begin{pmatrix} 
E_k & 0_{k,k}  & 0_{k,k} & \hdotsfor{3}& 0_{k,k} & 0_{k,k(n-k)} &  
\\
A_{k, n-k}^t &0_{n-k,k}& 0_{n-k,k} &\hdotsfor{3} & 0_{n-k,k}&B^1
\\ 
0_{k,k} &E_k  & 0_{k,k} & \hdotsfor{3}& 0_{k,k} & 0_{k,k(n-k)}  &  
\\
 0_{n-k,k}&A_{k, n-k}^t&0_{n-k,k}  &\hdotsfor{3} & 0_{n-k,k}&B^2
\\
\vdots &  \vdots &            &  &    & \ddots      &     &  \vdots \\ 
0_{k,k} & 0_{k,k}& 0_{k,k}   &   \hdotsfor{3}& E_k &0_{k,k( n-k)} \\ 
0_{n-k,k}& 0_{n-k,k}& 0_{n-k,k} & \hdotsfor{3}&
 A_{k,n-k}^t & B^k  
\end{pmatrix}, 
 $$ 
where $0_{j,m}$ denotes the $j\times m$ zero matrix and $A_{k,n-k}^t$ 
denotes the transpose.  We can see easily that $|A|$ is equal to the absolute 
value of the determinant of the matrix 

$$\begin{pmatrix} 
E_k & & &  &  
\\ 
 &E_k & &  &  
\\ 
    &  &\ddots & & 
\\ 
&          &  & E_k  & 
\\  
A_{k,n-k}^t & & & & B^1  
\\ 
 & A_{k,n-k}^t& & & B^2   
\\ 
   &   &\ddots & & \vdots   
\\ 
&           & & A_{k,n-k}^t & B^k  
\\  
\end{pmatrix}  
$$ 
(where the components not expressed explicitly are all $0$),   
which equals the absolute value of the determinant of the $k(n-k)\times k(n-k)$ matrix: 
$$B=\begin{pmatrix} 
B^1 \\
B^2 \\ 
\vdots \\ 
B^k  
\end{pmatrix}.  
 $$  
By inspection $|\det B|$ is equal to $|\det S|^{n-k}$, where    
\begin{equation}\label{eee2.1} 
S= \begin{pmatrix} 
 s^1_{1} & s^1_{2}  &\dots       &  s^1_{k} \\ 
 s^2_{1} & s^2_{2}  &\dots       &  s^2_{k} \\ 
\hdotsfor{4}      \\ 
\hdotsfor{4}      \\ 
 s^k_{1} & s^k_{2}  &\dots       &  s^k_{k} 
\end{pmatrix}.  
\end{equation}
\par 
We write 
$$y_\ell=\left(\sum_{i=1}^k s^\ell_i a_{i1}, \sum_{i=1}^k s^\ell_i a_{i2}, 
\dots, \sum_{i=1}^k s^\ell_i a_{in}  \right), $$  
where $a_{ij}= \delta_{ij}$ for $1\leq j\leq k$; $\delta_{ii}=1$, 
$\delta_{ij}=0$ if $i\neq j$.
\par 
Let $1\leq j_1<j_2< \dots <j_k\leq n$. To compute $G(y_1, \dots, y_k)$, 
we note that 
\begin{align*}
&\begin{vmatrix} 
\sum_{i_1=1}^k s^1_{i_1}a_{i_1,j_1} &\sum_{i_2=1}^k s^1_{i_2}a_{i_2,j_2} 
 &\dots       & \sum_{i_k=1}^k s^1_{i_k}a_{i_k,j_k} \\ 
\sum_{i_1=1}^k s^2_{i_1}a_{i_1,j_1} &\sum_{i_2=1}^k s^2_{i_2}a_{i_2,j_2} 
 &\dots       & \sum_{i_k=1}^k s^2_{i_k}a_{i_k,j_k} \\
\hdotsfor{4}      \\ 
\hdotsfor{4}      \\ 
\sum_{i_1=1}^k s^k_{i_1}a_{i_1,j_1} &\sum_{i_2=1}^k s^k_{i_2}a_{i_2,j_2} 
 &\dots       & \sum_{i_k=1}^k s^k_{i_k}a_{i_k,j_k} 
\end{vmatrix}
\\ 
&= \sum_{i_1, i_2, \dots, i_k=1}^ka_{i_1,j_1}a_{i_2,j_2}\dots a_{i_k,j_k} 
\begin{vmatrix} 
 s^1_{i_1} & s^1_{i_2}  &\dots       &  s^1_{i_k} \\ 
 s^2_{i_1} & s^2_{i_2}  &\dots       &  s^2_{i_k} \\ 
\hdotsfor{4}      \\ 
\hdotsfor{4}      \\ 
 s^k_{i_1} & s^k_{i_2}  &\dots       &  s^k_{i_k} 
\end{vmatrix} 
\\ 
&= \begin{vmatrix} 
 a_{1,j_1} & a_{1,j_2}  &\dots       &  a_{1,j_k} \\ 
 a_{2,j_1} & a_{2,j_2}  &\dots       &  a_{2,j_k} \\ 
\hdotsfor{4}      \\ 
\hdotsfor{4}      \\ 
 a_{k,j_1} & a_{k,j_2}  &\dots       &  a_{k,j_k} \\ 
\end{vmatrix}
\begin{vmatrix} 
 s^1_{1} & s^1_{2}  &\dots       &  s^1_{k} \\ 
 s^2_{1} & s^2_{2}  &\dots       &  s^2_{k} \\ 
\hdotsfor{4}      \\ 
\hdotsfor{4}      \\ 
 s^k_{1} & s^k_{2}  &\dots       &  s^k_{k} 
\end{vmatrix}.  
\end{align*}  
Thus 
\begin{align*} 
G(y_1, \dots, y_k)&=\sum_{1\leq j_1<j_2< \dots <j_k\leq n}\begin{vmatrix} 
 a_{1,j_1} & a_{1,j_2}  &\dots       &  a_{1,j_k} \\ 
 a_{2,j_1} & a_{2,j_2}  &\dots       &  a_{2,j_k} \\ 
\hdotsfor{4}      \\ 
\hdotsfor{4}      \\ 
 a_{k,j_1} & a_{k,j_2}  &\dots       &  a_{k,j_k} \\ 
\end{vmatrix}^2
\begin{vmatrix} 
 s^1_{1} & s^1_{2}  &\dots       &  s^1_{k} \\ 
 s^2_{1} & s^2_{2}  &\dots       &  s^2_{k} \\ 
\hdotsfor{4}      \\ 
\hdotsfor{4}      \\ 
 s^k_{1} & s^k_{2}  &\dots       &  s^k_{k} 
\end{vmatrix}^2 
\\ 
&= G(v_1, \dots v_k)\left|\det S\right|^2. 
\end{align*}
Therefore 
\begin{align*} 
&ds^1_1\dots ds^1_k\dots ds_1^k\dots ds_k^kd\mu_{j_0} 
\\ 
&= ds^1_1\dots ds^1_k\dots ds_1^k\dots ds_k^k 
\rho_{j_0}(a_{1,k+1}, \dots, a_{k,n})\, da_{1,k+1} \dots  da_{k,n} 
\\ 
&= \rho_{j_0}(a_{1,k+1}, \dots, a_{k,n})
\left|\det S\right|^{k-n}\, dy_1\, \dots \, dy_k 
\\ 
&= \rho_{j_0}(a_{1,k+1}, \dots, a_{k,n}) N(a_{ij})^{n-k} 
\left|G(y_1, \dots, y_k) \right|^{(k-n)/2} \, dy_1\, \dots \, dy_k, 
\end{align*}
where 
$$N(a_{ij})=G(v_1, \dots v_k)^{1/2}.  $$
\par 
Let $y_\ell=\sum_{i=1}^k s_i^\ell v_i$, where $v_1, \dots, v_k$ are as above. 
It can be shown that 
\begin{align*}  
&\int_\theta \dots \int_\theta g(z_1, z_2, \dots , z_k)\, 
d\lambda_\theta(z_1)\, d\lambda_\theta(z_2) \dots \, d\lambda_\theta(z_k) 
\\ 
&=\left|G(v_1, v_2, \dots, v_k)\right|^{k/2}
\int_{\Bbb R^k} \dots \int_{\Bbb R^k}
g(y_1, \dots, y_k)\, \prod_{\ell=1}^k 
ds_1^\ell \dots \, ds_k^\ell  
\\ 
&=N(a_{ij})^{k} 
\int_{\Bbb R^k} \dots \int_{\Bbb R^k}
g(y_1, \dots, y_k)\, \prod_{\ell=1}^k 
ds_1^\ell \dots \, ds_k^\ell   
\end{align*}
for an appropriate function $g$, where $\theta\in G_{k,n}$ is spanned by 
$v_1, v_2, \dots, v_k$.  
Therefore  

\begin{align}\label{e1} 
&\int \left(\int\dots \int f(z_1, \dots, z_k, \theta)\, d\lambda_\theta(z_1)\, 
 \dots \, d\lambda_\theta(z_k)\right)
\\ 
& \qquad \qquad \notag 
\rho_{j_0}(x_1(\theta), \dots, 
x_{k(n-k)}(\theta)) \, \left|(dx_1)_\theta \wedge \dots \wedge 
(dx_{k(n-k)})_\theta\right| 
\\ 
&=\int \int_{\theta(a_{1,k+1}, \dots, a_{k,n})^k} f(z_1, \dots, z_k, \theta(a_{1,k+1}, \dots, a_{k,n})) \notag 
\\ 
& \qquad \qquad \notag 
\, \prod_{j=1}^k d\lambda_{\theta(a_{1,k+1}, \dots, a_{k,n})}(z_j) 
\rho_{j_0}(a_{1,k+1}, \dots, a_{k,n})\, da_{1,k+1}\,  \dots \, da_{k,n}
\\ 
&=\int N(a_{ij})^{k} 
\int_{\Bbb R^k} \dots \int_{\Bbb R^k}
f(y_1, \dots, y_k, \theta(a_{1,k+1}, \dots, a_{k,n}))\, \prod_{\ell=1}^k 
ds_1^\ell \dots \, ds_k^\ell    \notag 
\\
& \quad\quad \times \rho_{j_0}(a_{1,k+1}, \dots, a_{k,n})\, da_{1,k+1}\,  
\dots \, da_{k,n}  \notag 
\\ 
&=\int_{\Bbb R^n} \dots \int_{\Bbb R^n}
f(y_1, \dots, y_k, \theta(a_{1,k+1}, \dots, a_{k,n}))  \notag 
\\ 
&\quad\quad \times \rho_{j_0}(a_{1,k+1}, \dots, a_{k,n}) N(a_{ij})^n
\left|G(y_1, \dots, y_k) \right|^{(k-n)/2} \, dy_1\, \dots \, dy_k,   
\notag 
\end{align}  
where $\theta(a_{1,k+1}, \dots, a_{k,n})=\theta(v_1, \dots , v_k)$. 
\par 
Here we note the following. Let $S$ be as in \eqref{eee2.1}. 
Then 
\begin{equation}\label{eee2.3}
[y] := 
\begin{pmatrix} 
y_1 \\
y_2 \\ 
\vdots \\ 
y_k
\end{pmatrix} 
= (S, SA_{k,n-k})= S(E_k, A_{k,n-k}). 
\end{equation} 
Thus 
$$ 
S=S(y_1, \dots, y_k)= \begin{pmatrix} 
 y^1_{1} & y^1_{2}  &\dots       &  y^1_{k} \\ 
 y^2_{1} & y^2_{2}  &\dots       &  y^2_{k} \\ 
\hdotsfor{4}      \\ 
\hdotsfor{4}      \\ 
 y^k_{1} & y^k_{2}  &\dots       &  y^k_{k} 
\end{pmatrix}, 
$$ 
$$ 
T=T(y_1, \dots, y_k):= SA_{k,n-k}= \begin{pmatrix} 
 y^1_{k+1} & y^1_{k+2}  &\dots       &  y^1_{n} \\ 
 y^2_{k+1} & y^2_{k+2}  &\dots       &  y^2_{n} \\ 
\hdotsfor{4}      \\ 
\hdotsfor{4}      \\ 
 y^k_{k+1} & y^k_{k+2}  &\dots       &  y^k_{n} 
\end{pmatrix}, 
$$ 
and hence 
$$ A_{k,n-k}=\begin{pmatrix} 
a_{1,k+1} & \dots       & a_{1,n} \\
\hdotsfor{3}      \\ 
\hdotsfor{3}      \\ 
 a_{k,k+1} & \dots  &       a_{k,n}
\end{pmatrix}=S^{-1}T, $$  
where we write $y_\ell=(y^\ell_1, \dots, y^\ell_n)$.  
Therefore $a_{ij}$ can be expressed by $y^\ell_m$: 
\begin{equation*} 
a_{ij}=a_{ij}(y_1, \dots , y_k)= 
\left(S(y_1, \dots, y_k)^{-1}T(y_1, \dots, y_k)\right)_{i,j-k} 
\end{equation*} 
for $1\leq i\leq k$, $k+1\leq j\leq n$.  
  So we can write 
\begin{multline}\label{e2}
\int_{\Bbb R^n} \dots \int_{\Bbb R^n}
f(y_1, \dots, y_k, \theta(a_{1,k+1}, \dots, a_{k,n})) 
\\ 
 \rho_{j_0}(a_{1,k+1}, \dots, a_{k,n}) N(a_{ij})^n
\left|G(y_1, \dots, y_k) \right|^{(k-n)/2} \, dy_1\, \dots \, dy_k  
\\ 
= 
\int_{\Bbb R^n} \dots \int_{\Bbb R^n}
f(y_1, \dots, y_k, \theta(y_{1}, \dots, y_{k})) 
 \widetilde{\rho}_{j_0}(y_{1}, \dots, y_{k}) 
\\ 
\times \left|G(y_1, \dots, y_k) \right|^{(k-n)/2} 
\, dy_1\, \dots \, dy_k,   
\end{multline} 
where 
\begin{multline*}
 \widetilde{\rho}_{j_0}(y_{1}, \dots, y_{k}) 
\\ 
=\rho_{j_0}\left(a_{1,k+1}(y_{1}, \dots, y_{k}), 
\dots, a_{k,n}(y_{1}, \dots, y_{k})\right) 
N\left(a_{ij}(y_{1}, \dots, y_{k})\right)^n   
\end{multline*} 
and we have used the relation:  
$$\theta\left(a_{1,k+1}(y_{1}, \dots, y_{k}), 
\dots, a_{k,n}(y_{1}, \dots, y_{k})\right)=\theta(v_1, \dots , v_k)
=\theta(y_{1}, \dots, y_{k}), 
$$ 
which follows by \eqref{eee2.3}.  
We note that each $a_{ij}$ is homogeneous of degree $0$: 
$$a_{ij}(\tau y_{1}, \dots, \tau y_{k})=a_{ij}(y_{1}, \dots, y_{k})
\qquad \text{for all $\tau>0$.}$$
\par 
Let $[y]$ be the $k\times n$ matrix as in \eqref{eee2.3}. 
We also write $S(y_1, \dots, y_k)=S([y])$, $T(y_1, \dots, y_k)=T([y])$, 
$a_{ij}(y_1, \dots, y_k)=a_{ij}([y])$ and 
$\widetilde{\rho}_{j_0}(y_{1}, \dots, y_{k})=\widetilde{\rho}_{j_0}([y])$. 
Then if $\alpha$ is a $k\times k$ matrix, 
\begin{equation*} 
S(\alpha[y])=\alpha S([y]), \quad T(\alpha[y])=\alpha T([y]).  
\end{equation*} 
Thus if $\alpha$ is non-singular, we have 
$$S(\alpha[y])^{-1}T(\alpha[y])=(\alpha S([y]))^{-1}\alpha T([y]) 
= S([y])^{-1}\alpha^{-1}\alpha T([y])=  S([y])^{-1}T([y]), $$ 
and hence $a_{ij}(\alpha[y])=a_{ij}([y])$ and 
\begin{equation}\label{e3}  
\widetilde{\rho}_{j_0}(\alpha[y])=\widetilde{\rho}_{j_0}([y]). 
\end{equation}  
Summing up in $j_0$, by \eqref{e1} and \eqref{e2}, we have 
\begin{align} \label{e4}
&\int_{G_{k,n}}\int_{\theta^k} f(z_1, z_2, \dots, z_k, \theta)
\, d\lambda_\theta(z_1)\, d\lambda_\theta(z_2)\dots 
d\lambda_\theta(z_k) \, d\sigma(\theta) 
\\
& = \int_{\Bbb R^n} \dots \int_{\Bbb R^n}
f(y_1, \dots, y_k, \theta(y_{1}, \dots, y_{k})) 
 \rho(y_{1}, \dots, y_{k})                       \notag 
\\ 
&\quad \times \left|G(y_1, \dots, y_k) \right|^{(k-n)/2} 
\, dy_1\, \dots \, dy_k,                         \notag 
\end{align} 
where 
$$\rho= \sum_{j_0} \widetilde{\rho}_{j_0}. 
$$ 
\par 
We can see that $\rho$ is a positive constant function as follows.  
Let $y_1, \dots, y_k$ be linearly independent in $\Bbb R^n$ and 
$O_1, \dots , O_k$ be orthonormal in the space spanned by  $y_1, \dots, y_k$. 
Then there exists a non-singular $k\times k$ matrix $\alpha$ such that 
$$\alpha [y]= \begin{pmatrix} 
O_1 \\
O_2 \\ 
\vdots \\ 
O_k
\end{pmatrix}.    $$
By \eqref{e3}, it follows that  
\begin{equation} \label{e5} 
\rho(y_{1}, \dots, y_{k})=\rho(O_{1}, \dots, O_{k}). 
\end{equation}
Also \eqref{e4} implies that $\rho$ is invariant under the action of 
${\rm SO}(n)$: 
\begin{equation} \label{e6}
\rho(y_{1}, \dots, y_{k})=\rho(U y_{1}, \dots, U y_{k})
\end{equation}
for $U\in {\rm SO}(n)$.  This can be seen as follows. 
 Let 
 $$I=\int_{G_{k,n}}\int_{\theta^k} f(Uz_1, Uz_2, \dots, Uz_k, U\theta)
\, d\lambda_\theta(z_1)\, d\lambda_\theta(z_2)\dots 
d\lambda_\theta(z_k) \, d\sigma(\theta),    
 $$  
where $U\theta= \theta(Ux_1, \dots, Ux_k)$ if 
$\theta=\theta(x_1, \dots, x_k)$. 
 Then  by ${\rm SO}(n)$ invariance of $d\sigma$ and \eqref{e4}, we have 
\begin{align*} 
I&=\int_{G_{k,n}}\int_{(U\theta)^k} f(z_1, z_2, \dots, z_k, U\theta)
\, d\lambda_{U\theta}(z_1)\, d\lambda_{U\theta}(z_2)\dots 
d\lambda_{U\theta}(z_k) \, d\sigma(\theta) 
\\
&=\int_{G_{k,n}}\int_{\theta^k} f(z_1, z_2, \dots, z_k, \theta)
\, d\lambda_{\theta}(z_1)\, d\lambda_{\theta}(z_2)\dots 
d\lambda_{\theta}(z_k) \, d\sigma(\theta) 
\\
& = \int_{\Bbb R^n} \dots \int_{\Bbb R^n}
f(y_1, \dots, y_k, \theta(y_{1}, \dots, y_{k})) 
 \rho(y_{1}, \dots, y_{k})                       \notag 
\\ 
&\quad \quad \times \left|G(y_1, \dots, y_k) \right|^{(k-n)/2} 
\, dy_1\, \dots \, dy_k.                          \notag 
\end{align*} 
On the other hand, using \eqref{e4}, we see that 
\begin{align*} 
I&= \int_{\Bbb R^n} \dots \int_{\Bbb R^n}
f(Uy_1, \dots, Uy_k, U\theta(y_{1}, \dots, y_{k})) 
 \rho(y_{1}, \dots, y_{k})                       \notag 
\\ 
&\quad \quad \times \left|G(y_1, \dots, y_k) \right|^{(k-n)/2} 
\, dy_1\, \dots \, dy_k  
\\
& = \int_{\Bbb R^n} \dots \int_{\Bbb R^n}
f(y_1, \dots, y_k, U\theta(U^{-1}y_{1}, \dots, U^{-1}y_{k})) 
 \rho(U^{-1}y_{1}, \dots, U^{-1}y_{k})                       \notag 
\\ 
&\quad \quad \times \left|G(U^{-1}y_1, \dots, U^{-1}y_k) \right|^{(k-n)/2} 
\, dy_1\, \dots \, dy_k,                         \notag 
\\
& = \int_{\Bbb R^n} \dots \int_{\Bbb R^n}
f(y_1, \dots, y_k, \theta(y_{1}, \dots, y_{k})) 
 \rho(U^{-1}y_{1}, \dots, U^{-1}y_{k})                       \notag 
\\ 
&\quad \quad \times \left|G(y_1, \dots, y_k) \right|^{(k-n)/2} 
\, dy_1\, \dots \, dy_k.                           \notag 
\end{align*} 
Comparing the two expressions of $I$ above, we can see that \eqref{e6} 
holds true. 
\par 
 Rearranging $O_1, \dots , O_k$,  if necessary, 
 we can find $U\in {\rm SO}(n)$ so that $U O_1=e_1$, \dots , 
$U O_k=e_k$, where $\{e_1, \dots, e_n\}$ denotes the standard basis of 
$\Bbb R^n$. Then 
by \eqref{e5} and \eqref{e6} we have 
$$
\rho(y_{1}, \dots, y_{k})=\rho(e_{1}, \dots, e_k). 
$$
This completes the proof of Lemma \ref{L2.3}.

\end{proof}  
\par 
We state a result analogous to Lemma \ref{L2.3}, which will be 
used in proving Theorem \ref{the2}.  
\begin{lemma}\label{L2.6}  We have 
$$d\lambda_{\theta}(\omega_1) \dots  d\lambda_{\theta}(\omega_k)
\, d\sigma(\theta) =
c|\det G(\omega_1, \dots , \omega_k)|^{(k-n)/2} \, d\omega_1 \dots 
d\omega_k,   
 $$  
$1\leq k < n$,  which means \eqref{e2.7} below,  where 
$d\omega_1, \dots, d\omega_k$ are the Lebesgue surface measure on the unit 
sphere $S^{n-1}$ and $d\lambda_{\theta}(\omega_1), \dots  , 
d\lambda_{\theta}(\omega_k)$ are as in \eqref{Skn}.  
\end{lemma}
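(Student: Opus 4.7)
The plan is to deduce Lemma \ref{L2.6} from Lemma \ref{L2.3} by introducing polar coordinates simultaneously on both sides of the identity in Lemma \ref{L2.3}: polar coordinates within each $k$-plane $\theta$ on the left-hand side, and polar coordinates in $\Bbb R^n$ on the right-hand side. Concretely, I would apply Lemma \ref{L2.3} to test functions of the product form
$$f(y_1,\dots,y_k,\theta)=F(y_1/|y_1|,\dots,y_k/|y_k|,\theta)\prod_{i=1}^k g_i(|y_i|),$$
where $F$ is the given test function on $(S^{n-1})^k\times G_{k,n}$ and $g_1,\dots,g_k$ are auxiliary radial test functions on $(0,\infty)$.

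On the left-hand side, for each fixed $\theta$ I would write $y_i=s_i\omega_i$ with $s_i>0$ and $\omega_i\in S^{k-1}_\theta$, so that the $k$-dimensional Lebesgue measure $d\lambda_\theta(y_i)$ (in the Lemma \ref{L2.3} sense) decomposes as $s_i^{k-1}\,ds_i$ times $|S^{k-1}|$ times the probability measure $d\lambda_\theta(\omega_i)$ from \eqref{Skn}. On the right-hand side, I would use polar coordinates $x_i=r_i\omega_i$ with $\omega_i\in S^{n-1}$, giving $dx_i=r_i^{n-1}\,dr_i\,d\omega_i$. The crucial point is the homogeneity of the Gram determinant recalled in Section \ref{sec2}: $G(r_1\omega_1,\dots,r_k\omega_k)=r_1^2\cdots r_k^2\,G(\omega_1,\dots,\omega_k)$, together with $\theta(r_1\omega_1,\dots,r_k\omega_k)=\theta(\omega_1,\dots,\omega_k)$ for positive scalars $r_i$, so that neither the angular argument of $F$ nor the $\theta$ argument depends on the radii $r_i$.

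The arithmetic then works out cleanly: on the right, each radial variable $r_i$ carries the weight $r_i^{n-1}\cdot r_i^{k-n}=r_i^{k-1}$, precisely matching the factor $s_i^{k-1}$ produced by polar coordinates in $\theta$ on the left. After equating the two expressions supplied by Lemma \ref{L2.3} and canceling the common radial integrals $\prod_i\int_0^\infty g_i(s)s^{k-1}\,ds$ (which can be chosen arbitrary and nonzero), the identity of Lemma \ref{L2.6} drops out, with the new constant $c$ absorbing $|S^{k-1}|^k$ and the constant from Lemma \ref{L2.3}. I do not foresee a real obstacle; the only point to check is that the set $\{(\omega_1,\dots,\omega_k)\in(S^{n-1})^k:G(\omega_1,\dots,\omega_k)=0\}$ has zero surface measure in $(S^{n-1})^k$, which is clear since it is the zero set of a nontrivial polynomial restricted to a product of spheres and hence causes no integrability issue for the factor $|G|^{(k-n)/2}$ against test functions. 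The bulk of the argument is simply polar coordinates combined with the scaling properties of $G$ and the map $\theta(\cdot)$.
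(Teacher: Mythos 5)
Your proposal is correct and follows essentially the same route as the paper: apply Lemma \ref{L2.3} to a test function of separated radial-angular form, pass to polar coordinates on both sides using the degree-$2$ homogeneity of each argument of the Gram determinant and the scale-invariance of $\theta(\cdot)$, and then factor out the common radial integral. The only cosmetic differences are that you track the normalization constant $|S^{k-1}|^k$ (arising from the probability measure in \eqref{Skn}) explicitly and you note the null set $\{G=0\}$, both of which the paper leaves implicit.
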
  

\begin{proof}
By Lemma \ref{L2.3}, we have  
\begin{multline*} 
\int_{G_{k,n}}\int_{y_1, \dots , y_k\in \theta} 
F(y_1, \dots, y_k,\theta) 
\, d\lambda_\theta(y_1)\dots \, d\lambda_\theta(y_k)\, d\sigma(\theta)
\\ 
= c\int_{\Bbb R^{nk}}F(x_1, \dots, x_k,\theta(x_1, \dots, x_k)) \,
 |G(x_1, \dots, x_k)|^{(k-n)/2} \, dx_1 \dots dx_k.  
\end{multline*} 
Thus, using polar coordinates, we see that 
\begin{multline*}
\int_{G_{k,n}}\int_{(0,\infty)^k}\int_{(S_\theta^{k-1})^k} 
F(r_1\omega_1, \dots, r_k\omega_k,\theta) (r_1\dots r_k)^{k-1} \, 
\prod_{j=1}^k dr_j\, d\lambda_\theta(\omega_j)\, d\sigma(\theta)
\\ 
= c\int_{(0,\infty)^k}\int_{(S^{n-1})^k} 
F(r_1\omega_1, \dots, r_k\omega_k,\theta(\omega_1, \dots, \omega_k)) 
|G(\omega_1, \dots, \omega_k)|^{(k-n)/2} 
\\ 
\times(r_1\dots r_k)^{k-1} \, dr_1 \dots\, dr_k 
\, d\omega_1 \dots \, d\omega_k.  
\end{multline*} 
Taking a function of the form 
$$H(|x_1|, \dots, |x_k|) F_0(x_1', \dots, x_k', \theta), 
\quad x_j'=x_j/|x_j| \quad (1\leq j\leq k) $$  
as $F$ and factoring out 
$\int_{(0,\infty)^k}H(r_1, \dots, r_k)(r_1\dots r_k)^{k-1}\, dr_1\dots dr_k$, 
we have
\begin{align}\label{e2.7} 
&\int_{G_{k,n}}\int_{(S_\theta^{k-1})^k} 
F_0(\omega_1, \dots, \omega_k,\theta) \, d\lambda_\theta(\omega_1)
\dots \, d\lambda_\theta(\omega_k)\, d\sigma(\theta) 
\\ 
&=c\int_{(S^{n-1})^k} 
F_0(\omega_1, \dots, \omega_k,\theta(\omega_1, \dots, \omega_k)) 
 |G(\omega_1, \dots, \omega_k)|^{(k-n)/2} 
  \, d\omega_1\dots \, d\omega_k.                 \notag 
\end{align}  
This implies what we need. 

\end{proof} 

\par      
\section{Proof of Theorem \ref{the1} }\label{sec3}  
In this section we prove Theorem \ref{the1} assuming Theorem \ref{the2}. 
Let 
$$ A_{k,n}(f_0, f_1, \dots, f_n)=
\int_{\Bbb S}  \left(\prod_{j=0}^n T_{k,n} f_j(x,\theta)\right)\, 
d\nu(x,\theta).$$ 
We note that 
\begin{equation*} 
\int_{\Bbb S}  (T_{k,n}f(x,\theta))^{n+1}\, d\nu(x,\theta)=
A_{k,n}(f, f, \dots, f).
\end{equation*}  
The following result obviously implies Theorem \ref{the1}. 
\begin{proposition} \label{prop1.3}  
Let $k$ be an integer such that  $1\leq k<n$. 
We have 
$$ \left|A_{k,n}(f_0, f_1, \dots, f_n)\right|\leq C \prod_{j=0}^n 
\|f_j\|_{(n+1)/(k+1), n+1}.$$  
In fact, we have more general estimates:  
$$ \left|A_{k,n}(f_0, f_1, \dots, f_n)\right|\leq C(p_0, \dots , p_n) 
\prod_{j=0}^n \|f_j\|_{p_j, n+1}, $$    
where $\sum_{j=0}^n \frac{1}{p_j}=k+1$ and $1< p_0, \dots, p_n <\frac{n}{k}$.  
\end{proposition}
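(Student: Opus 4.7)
My plan is to reduce Proposition \ref{prop1.3} to a base multilinear estimate provable via Theorem \ref{the2}, and then to extend it to the full parameter range by multilinear interpolation for Lorentz spaces. The opening step is to apply Lemma \ref{L1.1} to the $k+1$ factors $T_{k,n}f_j(\pi) = \int f_j\,d\lambda_{P(\pi)}$ with $j=0, 1, \dots, k$. Combined with $d\nu(\pi)$, Lemma \ref{L1.1} transforms them into Lebesgue integrals with a Gram-determinant weight, producing
$$A_{k,n}(f_0, \dots, f_n) = c \int \prod_{j=0}^k f_j(x_j) \prod_{j=k+1}^n T_{k,n}f_j\bigl(\pi(x_0, \dots, x_k)\bigr) \, |G(x_1 - x_0, \dots, x_k - x_0)|^{(k-n)/2} \, dx_0 \cdots dx_k.$$
Translating $y_j = x_j - x_0$ and introducing polar coordinates $y_j = r_j\omega_j$ with $\omega_j \in S^{n-1}$ splits the Gram factor as $(r_1 \cdots r_k)^{k-n}|G(\omega_1, \dots, \omega_k)|^{(k-n)/2}$, and Lemma \ref{L2.6} then converts the $(S^{n-1})^k$-integral with $|G|$-weight into an integral over $G_{k,n}$ against probability measures on $S^{k-1}_\theta$, so that the inner spherical integrals coincide with $S_{k,n}[f_j(x_0 + r_j\,\cdot)](\theta)$ for $j = 1, \dots, k$.

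Next, I would establish the base estimate referenced as \eqref{e10} at one vertex of the admissible polytope. H\"older's inequality in $\theta \in G_{k,n}$, with all exponents equal to $n$, bounds the inner integral over $G_{k,n}$ by the product $\prod_{j=1}^k \|S_{k,n}[f_j(x_0 + r_j\,\cdot)]\|_{L^n(G_{k,n})} \prod_{j=k+1}^n \|T_{k,n}f_j(x_0, \cdot)\|_{L^n(G_{k,n})}$. Theorem \ref{the2} bounds each $S_{k,n}$-factor by the Lorentz norm $\|f_j(x_0 + r_j\,\cdot)\|_{L^{n/k, n}(S^{n-1})}$; for the $T_{k,n}$-factors, the polar decomposition $T_{k,n}f(x_0, \theta) = |S^{k-1}|\int_0^\infty r^{k-1}S_{k,n}[f(x_0+r\,\cdot)](\theta)\,dr$ combined with Minkowski's inequality in $L^n(G_{k,n})$ and Theorem \ref{the2} again gives a bound by radial integrals of the same Lorentz norms. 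Integrating in $r_1, \dots, r_k$ and then applying H\"older in $x_0$ against $f_0$ produces the required base multilinear Lorentz bound at the chosen vertex.

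The final and principal step is the extension of this base estimate to the full range $\sum_{j=0}^n 1/p_j = k+1$, $1 < p_j < n/k$ by multilinear interpolation for Lorentz spaces. This is the main obstacle: multilinear interpolation with the Lorentz second index held fixed (at $n+1$) is considerably more delicate than the linear case, and it is precisely for this reason that the interpolation framework is developed in Sections \ref{sec5}--\ref{sec9}. Moreover, since the necessity argument forbids the endpoint $p_j = n/k$, the interpolation has to proceed from vertices strictly inside each edge of the admissible polytope, forcing one to be careful that the base estimate sits at an appropriate interior vertex from which the full open range can be reached.
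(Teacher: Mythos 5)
Your high-level plan is the right one and matches the paper: pass through Lemma \ref{L1.1}, introduce polar coordinates and Lemma \ref{L2.6} to reduce to an integral over $G_{k,n}$, establish a base multilinear Lorentz estimate using Theorem \ref{the2}, and then reach the full range by multilinear Lorentz interpolation. But the base estimate as you describe it does not close, because of an order-of-operations error in how you treat the radial integrations.

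Concretely: you apply H\"older in $\theta$ (with all exponents $n$) \emph{before} integrating out $r_1,\dots,r_k$, and you apply Minkowski's inequality in $L^n(G_{k,n})$ for the $T_{k,n}$-factors. Both operations push the radial integral to the outside of the $L^n(G_{k,n})$ and Lorentz norms, leaving you, for every $j=1,\dots,n$, with the quantity
\[
\int_0^\infty r^{k-1}\,\bigl\|f_j(x_0+r\,\cdot)\bigr\|_{L^{n/k,n}(S^{n-1})}\,dr,
\]
which you would need to bound by a Lorentz norm of $f_j$, say $\|f_j\|_{n/k,1}$. That inequality is false. Take $f=\chi_E$ with $E=\bigcup_{j=1}^N A_j$, where $A_j=\{r\omega: 2^j<r<2^{j+1},\ \omega\in S_j\}$ and $S_j\subset S^{n-1}$ is a cap with $\sigma(S_j)=2^{-jn}$. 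Each $A_j$ has Lebesgue measure $\sim 1$, so $\|f\|_{n/k,1}\sim N^{k/n}$, while for $r\in(2^j,2^{j+1})$ one has $\|f(r\cdot)\|_{L^{n/k,n}(S^{n-1})}\sim 2^{-jk}$, hence $\int_0^\infty r^{k-1}\|f(r\cdot)\|_{n/k,n}\,dr\sim\sum_{j\le N}2^{-jk}2^{jk}=N$. The ratio is $N^{1-k/n}\to\infty$. So the estimate you aim at cannot hold in this form.

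The paper avoids this by integrating in $r$ \emph{first}, via Fubini rather than Minkowski, \emph{before} touching any norm: it sets $F_j(\omega)=\int_0^\infty r^{k-1}f_j(x_0+r\omega)\,dr$, observes $\int_0^\infty r_j^{k-1}S_{k,n}[f_j(x_0+r_j\,\cdot)](\theta)\,dr_j=S_{k,n}F_j(\theta)$ and $T_{k,n}f_j(x_0,\theta)=cS_{k,n}F_j(\theta)$ \emph{exactly} (no Minkowski loss), reducing $A_{k,n}$ to $c\int f_0(x_0)\int_{G_{k,n}}\prod_{j=1}^n S_{k,n}F_j(\theta)\,d\sigma(\theta)\,dx_0$, and only then applies H\"older in $\theta$, Theorem \ref{the2}, and Lemma \ref{L1.5} to get $\prod_j\|F_j\|_{n/k}\le C\prod_j\|f_j\|_{n/k,1}$; the $x_0$-integration against $f_0$ then gives \eqref{e10}. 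If you replace your Minkowski/early-H\"older step with this Fubini regrouping, your second step is repaired and lands on exactly the paper's \eqref{e10}.

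One further remark on your third step: the worry that ``the interpolation has to proceed from vertices strictly inside each edge of the admissible polytope'' is misplaced. The base estimate \eqref{e10} lives \emph{on the boundary} of the polytope ($p_0=1$, $p_j=n/k$ for $j\ge1$, both endpoints), and the interior range $1<p_j<n/k$ is reached by interpolating between \eqref{e10} and its permutations (that is Lemma \ref{L1.6}), after which the second Lorentz indices are raised step by step to $n+1$ using Lemmas \ref{L1.7} and \ref{L1.9+}. There is no need to first produce an interior vertex.
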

\begin{proof} 
We have 
\begin{multline}  \label{e7}
A_{k,n}(f_0, f_1, \dots, f_n) 
\\ 
=c\int f_0(x_0)\dots f_k(x_k)\left(\prod_{j=k+1}^n 
\int_{\theta(x_1-x_0,\dots, x_k-x_0)} 
f_j(x_0+y)\, d\lambda_{\theta(x_1-x_0,\dots, x_k-x_0)}(y) \right) 
\\ 
\times \left|G(x_1-x_0, \dots, x_k-x_0)\right|^{\frac{k-n}{2}} 
\, dx_0 \dots\, dx_k.    
\end{multline} 
We can see this as follows. Applying Lemma \ref{L1.1}, we have 
\begin{align*}
& A_{k,n}(f_0, f_1, \dots, f_n)
\\ 
&=\int_{\Bbb S} \int\dots \int f_0(x_0)\dots f_k(x_k)
\left(\prod_{j=k+1}^n T_{k,n} f_j(x,\theta)\right)\, 
d\lambda_{P(x,\theta)}(x_0)\dots \, d\lambda_{P(x,\theta)}(x_k)\, 
d\nu(x,\theta) 
\\ 
&= c\int\dots \int f_0(x_0)\dots f_k(x_k) 
\left(\prod_{j=k+1}^n T_{k,n} f_j((x_0)_{\theta(x_1-x_0,\dots,x_k-x_0)^\perp},
\theta(x_1-x_0,\dots,x_k-x_0))\right) 
\\ 
&\quad \times \left|G(x_1-x_0,\dots, x_k-x_0)\right|^{\frac{k-n}{2}}
\, dx_0 \dots \, dx_k.  
\end{align*} 
Combining this with the observation:    
\begin{align*}
&T_{k,n}f_j((x_0)_{\theta(x_1-x_0,\dots,x_k-x_0)^\perp}, 
\theta(x_1-x_0,\dots,x_k-x_0)) 
\\ 
&= \int f_j((x_0)_{\theta(x_1-x_0,\dots,x_k-x_0)^\perp}+y)
\, d\lambda_{\theta(x_1-x_0,\dots, x_k-x_0)}(y)
\\
&= \int f_j(x_0+y)\, d\lambda_{\theta(x_1-x_0,\dots, x_k-x_0)}(y), 
\end{align*}
which follows by \eqref{ee1.4}, we get \eqref{e7}.
\par 
Let $\bar{\omega}_1= \frac{x_1-x_0}{|x_1-x_0|}$. When $k\geq 2$, define  
\begin{multline*} 
\Omega\left(x_0, \frac{x_1-x_0}{|x_1-x_0|}\right)
\\ 
=\int f_2(x_2)\dots f_k(x_k)\left(\prod_{j=k+1}^n \int 
f_j(x_0+y)\, d\lambda_{\theta(x_1-x_0,\dots, x_k-x_0)}(y) \right) 
\\ 
\times \left|G(\bar{\omega}_1, x_2-x_0,  \dots, 
x_k-x_0)\right|^{\frac{k-n}{2}} \, dx_2 \dots\, dx_k;     
\end{multline*} 
when $k=1$, let 
$$\Omega\left(x_0, \frac{x_1-x_0}{|x_1-x_0|}\right)=
\prod_{j=k+1}^n \int 
f_j(x_0+y)\, d\lambda_{\theta(x_1-x_0)}(y).  
$$
Let 
\begin{equation*} 
K(x_0, x_1)= 
|x_1-x_0|^{k-n}\Omega\left(x_0, \frac{x_1-x_0}{|x_1-x_0|}\right).  
\end{equation*}
Then, it is easy to see that 
\begin{equation*} 
A_{k,n}(f_0, f_1, \dots, f_n)=\iint f_0(x_0)f_1(x_1)K(x_0, x_1)
\, dx_0\, dx_1.  
\end{equation*} 
\par 
For $\omega_1\in S^{n-1}$ and $x_0\in \Bbb R^n$, when $k\geq 2$, define   
\begin{multline*} 
\Omega\left(x_0, \omega_1\right)
\\ 
=\int f_2(x_2+x_0)\dots f_k(x_k+x_0)\left(\prod_{j=k+1}^n 
\int_{\theta(\omega_1,x_2, \dots, x_k)} 
f_j(x_j+x_0)\, d\lambda_{\theta(\omega_1,x_2, \dots, x_k)}(x_j) \right) 
\\ 
\times \left|G(\omega_1, x_2,  \dots, x_k)\right|^{\frac{k-n}{2}} 
\, dx_2 \dots\, dx_k;  
\end{multline*} 
when $k=1$, let 
$$\Omega\left(x_0, \omega_1\right)=
  \prod_{j=k+1}^n \int_{\theta(\omega_1)} 
f_j(x_j+x_0)\, d\lambda_{\theta(\omega_1)}(x_j).  
$$
We note that $\Omega\left(x_0, \omega_1\right)
=|x_1-x_0|^{n-k}K(x_0,x_1)$ 
when $\omega_1=\bar{\omega}_1=(x_1-x_0)/|x_1-x_0|$.  
We show that 
\begin{equation} \label{e9}
\sup_{x_0} \|\Omega(x_0,\cdot)\|_{L^{n/(n-k)}(S^{n-1})}\leq C\prod_{j=2}^n 
\|f_j\|_{n/k,1}.  
\end{equation}    
Let $\omega_j=x_j/|x_j|$, $r_j=|x_j|$,   
$\widetilde{f}_j(x_j)=f_j(x_j+x_0)$, $j\geq 2$. Then, since 
$\theta(\omega_1,x_2, \dots, x_k)= \theta(\omega_1,\dots, \omega_k)$, 
$k\geq 2$, using the polar coordinates, we have 
\begin{multline*} 
\int_{\theta(\omega_1,x_2, \dots, x_k)} 
f_j(x_j+x_0)\, d\lambda_{\theta(\omega_1,x_2, \dots, x_k)}(x_j)=
\int_{\theta(\omega_1,\dots, 
\omega_k)} 
\widetilde{f}_j(x_j)\, d\lambda_{\theta(\omega_1,\dots, \omega_k)}(x_j) 
\\ 
=c\int_0^\infty \int_{S^{k-1}_{\theta(\omega_1,\dots, \omega_k)}} 
\widetilde{f}_j(r_j\omega_j)r_j^{k-1}\, 
d\lambda_{\theta(\omega_1,\dots, \omega_k)}(\omega_j)\, dr_j.   
\end{multline*} 
A formula similar to this holds for $k=1$. 
Let 
\begin{equation*} 
F_j(\omega_j)=\int_0^\infty \widetilde{f}_j(r_j\omega_j)r_j^{k-1}\, dr_j 
\end{equation*}
for $2\leq j\leq n$.   Then, if $k\geq 2$, 
\begin{multline*} 
\Omega\left(x_0, \omega_1\right)
\\ 
=c\int \widetilde{f}_2(r_2\omega_2)\dots 
\widetilde{f}_k(r_k\omega_k)\left(\prod_{j=k+1}^n 
\int_{S^{k-1}_{\theta(\omega_1,\dots, \omega_k)}} 
F_j(\omega_j)\, d\lambda_{\theta(\omega_1,\dots, \omega_k)}(\omega_j) \right) 
\\ 
\times (r_2\dots r_k)^{k-1}
\left|G(\omega_1, \omega_2,  \dots, \omega_k)\right|^{\frac{k-n}{2}} 
\, d\omega_2 \dots\, d\omega_k\, dr_2 \dots\, dr_k 
\\ 
=c\int_{(S^{n-1})^{k-1}}
 F_2(\omega_2)\dots F_k(\omega_k)\left(\prod_{j=k+1}^n 
\int_{S^{k-1}_{\theta(\omega_1,\dots, \omega_k)}} 
F_j(\omega_j)\, d\lambda_{\theta(\omega_1,\dots, \omega_k)}(\omega_j) \right) 
\\ 
\times 
\left|G(\omega_1, \omega_2,  \dots, \omega_k)\right|^{\frac{k-n}{2}} 
\, d\omega_2 \dots\, d\omega_k;      
\end{multline*}  
if $k=1$, 
$$\Omega\left(x_0, \omega_1\right)= \prod_{j=k+1}^n 
\int_{S^{k-1}_{\theta(\omega_1)}} 
F_j(\omega_j)\, d\lambda_{\theta(\omega_1)}(\omega_j)
=\prod_{j=2}^n \left(F_j(\omega_1)+F_j(-\omega_1)\right)/2.  
 $$
 By Lemma \ref{L2.6} we see that 

\begin{align*} 
&\int_{S^{n-1}} F_1(\omega_1)\Omega(x_0,\omega_1)\, d\omega_1 
\\ 
&=c\int_{G_{k,n}}\int_{(S^{k-1}_\theta)^{k}}
 F_1(\omega_1)F_2(\omega_2)\dots F_k(\omega_k) 
\\ 
&\qquad \quad \times\left(\prod_{j=k+1}^n \int_{S^{k-1}_\theta} 
F_j(\omega_j)\, d\lambda_{\theta}(\omega_j) \right) 
\, d\lambda_{\theta}(\omega_1)\dots \, d\lambda_{\theta}(\omega_k)\, 
d\sigma(\theta)
\\ 
&=c\int_{G_{k,n}}
\left(\prod_{j=1}^n \int_{S_\theta^{k-1}} 
F_j(\omega_j)\, d\lambda_{\theta}(\omega_j) \right) 
\, d\sigma(\theta)
\\ 
&=c\int_{G_{k,n}}
\left(\prod_{j=1}^n S_{k,n}(F_j)(\theta) \right) 
\, d\sigma(\theta).  
\end{align*}
Thus by H\"{o}lder's inequality we have 
$$\left|\int_{S^{n-1}} F_1(\omega_1)\Omega(x_0,\omega_1)\, d\omega_1\right| 
\leq c\prod_{j=1}^n \left\|S_{k,n}(F_j)\right\|_n.  
$$
So by Theorem \ref{the2} we have 
\begin{align*} 
\left|\int_{S^{n-1}} F_1(\omega_1)\Omega(x_0,\omega_1)\, d\omega_1 \right| 
&\leq C  \prod_{j=1}^n \|F_j\|_{n/k, n}
\\ 
& \leq   C\prod_{j=1}^n \|F_j\|_{n/k}
\\ 
& \leq  C \|F_1\|_{n/k} \prod_{j=2}^n \|\widetilde{f}_j\|_{n/k, 1}
\\ 
& =C\|F_1\|_{n/k} \prod_{j=2}^n \|f_j\|_{n/k, 1}.  
\end{align*} 
where the last inequality follows from Lemma \ref{L1.5} below.   
This proves \eqref{e9} by the converse of H\"{o}lder's inequality. 

\par 
Recall that 
\begin{align*} 
 A_{k,n}(f_0, f_1, \dots, f_n)&=\int f_0(u)f_1(v) K(u,v) \, du\, dv, 
\\  
 K(u,v)&=|v-u|^{k-n}\Omega\left(u,\frac{v-u}{|v-u|} \right).  
\end{align*} 
By \eqref{e9}  we can show that  
\begin{equation}\label{eq1}   
\sup_u\|K(u,\cdot)\|_{q,\infty}  \leq C\prod_{j=2}^n \|f_j\|_{n/k,1}, 
\quad q^{-1}=1- kn^{-1} 
\end{equation}   
as follows. 
\begin{align} \label{e3.5} 
&\left|\left\{v\in \Bbb R^n : |u-v|^{-n+k}
\left|\Omega\left(u, \frac{v-u}{|v-u|}\right)\right|>\lambda \right\}\right| 
\\ 
&=\left|\left\{v\in \Bbb R^n: |v|^{-n+k}    \notag 
\left|\Omega\left(u, \frac{v}{|v|}\right)\right|>\lambda \right\}\right| 
\\ 
&=\int_{\Bbb R^n}\chi_{[1,\infty)}\left(\lambda^{-1}|v|^{-n+k}
\left|\Omega\left(u, \frac{v}{|v|}\right)\right|
\right)\, dv                                           \notag 
\\ 
&=\int_{S^{n-1}}\int_0^\infty 
\chi_{[1,\infty)}\left(\lambda^{-1}r^{-n+k}|\Omega(u,\omega)|\right)
r^{n-1}\, dr
\, d\omega      \notag 
\\                                    
&=\int_{S^{n-1}}\int_0^{(\lambda^{-1}|\Omega(u,\omega)|)^{1/(n-k)}}r^{n-1} 
\, dr\, d\omega                                            \notag 
=\int_{S^{n-1}}(\lambda^{-1}|\Omega(u,\omega)|)^{n/(n-k)}\frac{1}{n} 
\, d\omega      
\\                                        
&=\frac{1}{n} \lambda^{-q} \int_{S^{n-1}} |\Omega(u,\omega)|^q \, 
d\omega,                                                  \notag 
\end{align} 
where $\lambda>0$.   
By this and \eqref{e9} we have  \eqref{eq1}. 
\par 
We note that \eqref{eq1} implies 
$$\sup_u\left|\int f_1(v)K(u,v)\, dv\right|\leq \|f_1\|_{n/k,1}
\sup_u \|K(u,\cdot)\|_{q,\infty} \leq 
C\|f_1\|_{n/k,1}\prod_{j=2}^n \|f_j\|_{n/k,1}. $$   
Therefore  
\begin{align}\label{e10} 
|A_{k,n}(f_0, f_1, \dots, f_n)|
&= \left|\int f_0(u)f_1(v) K(u,v)\, du\, dv\right|
\\ 
&= 
\left|\int f_0(u) \left(\int f_1(v)K(u,v)\, dv\right)\, du\right|  \notag 
\\ 
&\leq \|f_0\|_1 \left\|\int f_1(v)K(u,v)\, dv \right\|_{L^\infty(du)} \notag 
\\ 
&\leq C\|f_0\|_1 \|f_1\|_{n/k,1}\prod_{j=2}^n \|f_j\|_{n/k,1}. \notag  
\end{align}  
By interpolation arguments using \eqref{e10}, which will be given in 
Section 5, we have Proposition \ref{prop1.3}. 
\end{proof} 
\par 
Finally, we prove the following lemma used in the proof of \eqref{e9}.  
\begin{lemma}\label{L1.5} 
Let $1<p<\infty, \alpha=n/p, \omega\in S^{n-1}$. Define 
$$B_\alpha f(\omega)=\int_{0}^\infty f(t\omega)t^{\alpha-1}\, dt. $$  
Then 
$$ \|B_\alpha f\|_{L^p(S^{n-1})}\leq C\|f\|_{p,1}. $$ 
\end{lemma}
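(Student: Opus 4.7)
The plan is to use duality and a H\"older-type inequality in Lorentz spaces. By the converse of H\"older's inequality on $S^{n-1}$, the estimate $\|B_\alpha f\|_{L^p(S^{n-1})}\le C\|f\|_{p,1}$ is equivalent to bounding the pairing $\int_{S^{n-1}}(B_\alpha f)(\omega)g(\omega)\,d\omega$ by $C\|f\|_{p,1}\|g\|_{L^{p'}(S^{n-1})}$ for every $g\in L^{p'}(S^{n-1})$. Interchanging the order of integration and passing to polar coordinates $x=t\omega$, with $dx=t^{n-1}\,dt\,d\omega$ and $\alpha-1=(n/p-n)+(n-1)$, the pairing rewrites as
$$\int_{\Bbb R^n}f(x)h(x)\,dx,\qquad h(x):=|x|^{-n/p'}g(x/|x|).$$

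Next I invoke the H\"older inequality in Lorentz spaces $L^{p,1}(\Bbb R^n)\times L^{p',\infty}(\Bbb R^n)\to L^1(\Bbb R^n)$ to estimate
$$\left|\int f h\,dx\right|\le C\|f\|_{p,1}\|h\|_{p',\infty}.$$
It remains to bound $\|h\|_{p',\infty}$ by $\|g\|_{L^{p'}(S^{n-1})}$. A direct computation of the distribution function of $h$ in polar coordinates gives, for each $\lambda>0$,
$$|\{x\in\Bbb R^n:|h(x)|>\lambda\}|=\int_{S^{n-1}}\int_0^{(|g(\omega)|/\lambda)^{p'/n}}r^{n-1}\,dr\,d\omega=\frac{\lambda^{-p'}}{n}\,\|g\|_{L^{p'}(S^{n-1})}^{p'},$$
so $\|h\|_{p',\infty}\le C\|g\|_{L^{p'}(S^{n-1})}$. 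Combining with the previous inequality and taking the supremum over $g$ with $\|g\|_{L^{p'}(S^{n-1})}\le 1$ yields the lemma.

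The only nontrivial ingredient is the Lorentz-space H\"older inequality, which is a standard consequence of O'Neil's inequality and is available through the Bergh--L\"ofstr\"om reference already cited. The remaining steps are a change of variables to polar coordinates, the identification of the exponent $-n/p'$ via $\alpha=n/p$, and a one-line distribution-function computation, so no serious obstacle is expected beyond quoting the Lorentz H\"older inequality.
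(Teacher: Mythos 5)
Your proof is correct and follows essentially the same route as the paper's: dualize against $g\in L^{p'}(S^{n-1})$, rewrite the pairing in polar coordinates as $\int f(x)|x|^{\alpha-n}g(x')\,dx$, apply the Lorentz-space H\"older inequality $L^{p,1}\times L^{p',\infty}\to L^1$, and bound $\||x|^{\alpha-n}g(x')\|_{p',\infty}$ by $\|g\|_{L^{p'}(S^{n-1})}$ via a distribution-function computation (the paper refers this last step to the analogous computation in \eqref{e3.5}).
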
 
\begin{proof} 
Let $g\in L^{p'}(S^{n-1})$. Then 
\begin{align*} 
\int_{S^{n-1}} g(\omega)B_\alpha f(\omega)\, d\omega&= 
\int_{\Bbb R^n} f(x)|x|^{\alpha-n} g(x')\, dx 
\\ 
&\leq \|f\|_{p,1}\||x|^{\alpha-n}g(x')\|_{p',\infty}
\\ 
&\leq C\|f\|_{p,1}\|g\|_{L^{p'}(S^{n-1})}, 
\end{align*}  
where the last inequality follows by arguing similarly to \eqref{e3.5}. 
This will imply the conclusion. 
\end{proof}  

\section{Proof of Theorem \ref{the2} } \label{sec4} 

In this section we give a proof of Theorem \ref{the2}. 
\par 
Let  
\begin{equation*} 
B_{k,n}( f_1, \dots, f_n) 
=\int_{G_{k,n}} \left(\prod_{j=1}^n 
\int_{S^{k-1}_\theta} f_j(\omega_j)\, d\lambda_{\theta}(\omega_j) \right)
\, d\sigma(\theta). 
\end{equation*} 

The following result implies Theorem \ref{the2}.
\begin{proposition}\label{p4.1} Let $1\leq k<n$, $k\in \Bbb Z$. Then we have 
\begin{equation*} \label{e2.16} 
\left|B_{k,n}( f_1, \dots, f_n)\right| \leq C\|f_1\|_{n/k,n} \|f_2\|_{n/k,n}
\dots \|f_n\|_{n/k,n}.   
\end{equation*} 
\end{proposition}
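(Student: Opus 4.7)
The proof naturally splits into the cases $k=1$ and $k\geq 2$.

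For $k=1$, the Grassmannian $G_{1,n}$ is the projective space $S^{n-1}/\{\pm 1\}$ and $S^0_\theta=\{\pm\omega_\theta\}$ for any unit vector $\omega_\theta$ spanning $\theta$. Hence $S_{1,n}(f)(\theta)=\tfrac12(f(\omega_\theta)+f(-\omega_\theta))$ and, lifting $d\sigma$ back to $S^{n-1}$,
$$B_{1,n}(f_1,\dots,f_n)=c\int_{S^{n-1}}\prod_{j=1}^{n}\tfrac12\bigl(f_j(\omega)+f_j(-\omega)\bigr)\,d\omega.$$
Expanding the product yields $2^{n}$ terms; each is bounded by H\"older's inequality on $S^{n-1}$ by $\prod_j\|f_j\|_{L^n(S^{n-1})}$. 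Since $n/k=n$ and $\|f\|_{n,n}\simeq\|f\|_n$, this is the desired estimate.

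For $k\geq 2$, the plan is to embed Proposition \ref{p4.1} into a wider multilinear family (this is Proposition \ref{p4.2} of the paper) of estimates
$$\bigl|B_{k,n}(f_1,\dots,f_n)\bigr|\leq C\prod_{j=1}^{n}\|f_j\|_{p_j,n},\qquad \sum_{j=1}^{n}\frac{1}{p_j}=k,$$
valid in an open range of exponents containing the diagonal point $p_1=\dots=p_n=n/k$; this diagonal case is precisely the proposition. The family is obtained by multilinear real interpolation (Lorentz version) from endpoint estimates \eqref{e2.15} of the same shape but with $L^{p_j,1}$ norms on the right, posted at the vertices of a suitable polytope; the required interpolation machinery is deferred to Sections \ref{sec5}--\ref{sec9}.

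The endpoints are established by induction on $k$. Applying Lemma \ref{L2.6} to the first $k$ factors of $\prod_{j=1}^n\int_{S^{k-1}_\theta}f_j\,d\lambda_\theta$ rewrites
$$B_{k,n}(f_1,\dots,f_n)=c\int_{(S^{n-1})^k}\Bigl(\prod_{j=1}^{k}f_j(\omega_j)\Bigr)\Bigl(\prod_{j=k+1}^{n}S_{k,n}(f_j)(\theta(\omega_1,\dots,\omega_k))\Bigr)\bigl|G(\omega_1,\dots,\omega_k)\bigr|^{(k-n)/2}d\omega_1\cdots d\omega_k.$$
Peeling off the variable $\omega_k$ and using the Gram determinant identity recorded in Lemma \ref{L2.10} (which factors $G(\omega_1,\dots,\omega_k)$ as $G(\omega_1,\dots,\omega_{k-1})$ times the squared distance from $\omega_k$ to $\mathrm{span}(\omega_1,\dots,\omega_{k-1})$), combined with polar coordinates in the orthogonal complement of that span, converts the $\omega_k$-integration into an integral over a lower-dimensional sphere and produces the lower Gram factor $|G(\omega_1,\dots,\omega_{k-1})|^{(k-n)/2}$ outside. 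Iterating reduces to the base case treated above.

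The main obstacle is the singularity of the weight $|G(\omega_1,\dots,\omega_k)|^{(k-n)/2}$ on the set where the $\omega_j$ are linearly dependent. For the induction to close, the endpoint must be formulated in the Lorentz $L^{p,1}$ scale, which is just strong enough to absorb this singularity after each peeling step; the subsequent interpolation then trades strength for a larger exponent on the second Lorentz index, landing exactly at the $L^{n/k,n}$ bound required by Proposition \ref{p4.1}.
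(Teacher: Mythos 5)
Your architecture matches the paper's: $k=1$ by H\"older after lifting $d\sigma$ to $S^{n-1}$, and $k\geq 2$ by proving the open-range family (Proposition \ref{p4.2}) via interpolation from an endpoint estimate \eqref{e2.15} established by induction. The $k=1$ case and the general interpolation outline are fine, and you correctly identify that the endpoint uses $L^{p_j,1}$ norms so the Gram singularity is absorbed.

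However, there is a genuine gap in your description of the endpoint reduction. First, you attribute to Lemma \ref{L2.10} the standard Gram recursion $G(\omega_1,\dots,\omega_k)=G(\omega_1,\dots,\omega_{k-1})\cdot\operatorname{dist}(\omega_k,\operatorname{span}(\omega_1,\dots,\omega_{k-1}))^2$; that is \emph{not} what Lemma \ref{L2.10} says. The actual lemma takes $\omega_1=e_1$, writes each remaining $\omega_j=(\cos t_j,(\sin t_j)\widetilde\omega_j)$ with $\widetilde\omega_j\in S^{n-2}$, and yields $G(\omega_1,\dots,\omega_k)=\bigl(\prod_{j=2}^k\sin^2t_j\bigr)G(\widetilde\omega_2,\dots,\widetilde\omega_k)$. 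Second, and more substantively, your proposed iteration peels off $\omega_k$ and claims to leave the weight $|G(\omega_1,\dots,\omega_{k-1})|^{(k-n)/2}$ behind, then iterates. But the factional operator one lands on after one step would need the weight $|G(\omega_1,\dots,\omega_{k-1})|^{((k-1)-m)/2}$ for the appropriate ambient dimension $m$, and $(k-n)/2\neq(k-1-n)/2$; the exponents do not line up, so the iteration as written does not close. The paper avoids this precisely by reducing $k$ \emph{and} $n$ simultaneously: after rotating $\omega_1$ to $e_1$ and applying Lemma \ref{L2.10}, all the other $\omega_j$ drop to $S^{n-2}$, the exponent $(k-n)/2=((k-1)-(n-1))/2$ is automatically preserved, the angular integrals produce the truncated operator $C_k$ controlled by \eqref{e2.17}, and a second application of Lemma \ref{L2.6} (now on $S^{n-2}$) identifies the remaining integral as a $B_{k-1,n-1}$-type quantity, to which the induction hypothesis (Theorem \ref{the2} for $S_{k-1,n-1}$) applies. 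You would need to replace your one-variable peeling with this simultaneous reduction in $(k,n)$ for the induction to be valid.
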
 
We note that  this follows by H\"{o}lder's inequality for $k=1$.  
This can be described more precisely as follows. 
Let $\theta\in G_{1,n}$ and $\theta \cap S^{n-1}=\{\eta, -\eta\}$.  
Then we have 
\begin{equation*} \label{S1n}
S_{1,n}(f)(\theta)
=\int_{S^{0}_\theta} f(\omega)\, d\lambda_{\theta}(\omega)
=(f(\eta)+f(-\eta))/2.  
\end{equation*} 
Let $\beta : S^{n-1} \to G_{1,n}$ be defined by $\beta(\omega)=
\theta(\omega)$. We note that $\beta^{-1}(\{\theta(\omega)\})
=\{\omega, -\omega\}$. 
The measure $d\sigma$ on $G_{1,n}$ is defined as (see \cite[3.2]{KP}) 
$$\int_{G_{1,n}} F(\theta)\, d\sigma(\theta) =\int_{S^{n-1}}F(\beta(\omega))
\, d\omega.  $$  
Thus 
\begin{equation*} 
B_{1,n}( f_1, \dots, f_n) 
=\int_{G_{1,n}} \left(\prod_{j=1}^n S_{1,n}(f_j)(\theta) \right)
\, d\sigma(\theta)  
=\int_{S^{n-1}} \left(\prod_{j=1}^n (f_j(\omega)+f_j(-\omega))/2\right)
\, d\omega. 
\end{equation*} 
Therefore, H\"{o}lder's inequality implies Proposition \ref{p4.1} for $k=1$. 
\par 
As in Proposition $\ref{prop1.3}$, we have more general result, 
when $k\geq 2$.  
\begin{proposition} \label{p4.2} 
Suppose that $2\leq k<n$, $k\in \Bbb Z$. 
Let 
$$\frac{k-1}{n-1}<\frac{1}{p_j}<1, \quad\quad  1\leq j\leq n, 
\quad\quad \sum_{j=1}^n \frac{1}{p_j}=k.  $$  
Then 
\begin{equation*} 
\left|B_{k,n}( f_1, \dots, f_n)\right| 
\leq C\prod_{j=1}^n\|f_j\|_{p_j,n}.   
\end{equation*} 

\end{proposition}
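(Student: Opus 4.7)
The plan is to deduce Proposition \ref{p4.2} from a strong-type multilinear endpoint estimate \eqref{e2.15}, together with its symmetric permutations, by multilinear real interpolation, in parallel with the derivation of Proposition \ref{prop1.3} from \eqref{e10} in Section \ref{sec3}. The scaling identity $\sum_{j=1}^n 1/p_j=k$ combined with the strict admissibility condition $(k-1)/(n-1)<1/p_j<1$ describes the relative interior of an $(n-1)$-dimensional simplex inside the hyperplane $\sum 1/p_j=k$, whose $n$ vertices are, up to permutation, the configurations obtained by sending exactly one $1/p_j$ to $1$ and the remaining $n-1$ to the common value $(k-1)/(n-1)$. The candidate endpoint estimate is therefore
\begin{equation*}
\left|B_{k,n}(f_1,\dots,f_n)\right|\;\leq\;C\,\|f_1\|_{1}\prod_{j=2}^{n}\|f_j\|_{(n-1)/(k-1),\,1},
\end{equation*}
together with the $n-1$ analogues obtained by permuting which $f_j$ plays the $L^1$ role.

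To prove \eqref{e2.15} I would argue by induction on $k$, with base case $k=2$. Starting from the representation
\begin{equation*}
B_{k,n}(f_1,\dots,f_n)=c\int_{(S^{n-1})^k}\prod_{j=1}^{k}f_j(\omega_j)\prod_{j=k+1}^{n}S_{k,n}(f_j)(\theta(\omega_1,\dots,\omega_k))\,|G(\omega_1,\dots,\omega_k)|^{\frac{k-n}{2}}\prod_{j=1}^{k}d\omega_j
\end{equation*}
furnished by Lemma \ref{L2.6}, I fix $\omega_1\in S^{n-1}$ and write $\omega_j=\cos\phi_j\,\omega_1+\sin\phi_j\,\eta_j$ with $\eta_j\in S^{n-2}\subset\omega_1^\perp$ for $j=2,\dots,k$. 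Since $\omega_1\perp\eta_j$, elementary row operations on the Gram matrix give
\begin{equation*}
G(\omega_1,\omega_2,\dots,\omega_k)=(\sin\phi_2)^2\cdots(\sin\phi_k)^2\,G(\eta_2,\dots,\eta_k),
\end{equation*}
while the plane $\theta(\omega_1,\dots,\omega_k)$ decomposes as $\Bbb R\omega_1\oplus\theta'(\eta_2,\dots,\eta_k)$ with $\theta'$ a $(k-1)$-plane in $\omega_1^\perp$; consequently each factor $S_{k,n}(f_j)(\theta)$ for $j\geq k+1$ reduces, after integration in the angle $\psi$ along the $\omega_1$-direction, to $c\,S_{k-1,n-1}(F_j(\omega_1,\cdot))(\theta')$ for an explicit radial average $F_j$ of $f_j$. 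Using $d\omega_j=\sin^{n-2}\phi_j\,d\phi_j\,d\eta_j$ together with Lemma \ref{L2.10} to factor the Gram weight cleanly between $\omega_1$ and the $\eta_j$, the inner $(k-1)$-fold integral literally becomes an instance of $B_{k-1,n-1}$ for sliced data on $S^{n-2}$, to which the inductive form of \eqref{e2.15} at level $(k-1,n-1)$ applies. The residual integrations in $\omega_1$ and in the angular variables $\phi_j,\psi$ are then controlled by a distribution-function argument analogous to \eqref{e3.5} together with Lemma \ref{L1.5}, absorbing the powers of $\sin\phi_j$ and converting the one-dimensional radial integrals into Lorentz norms $\|f_j\|_{(n-1)/(k-1),1}$ and the singled-out factor into $\|f_1\|_1$. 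For $k=2$ the reduction lands in $B_{1,n-1}$, which as noted after Proposition \ref{p4.1} is a direct application of H\"older's inequality on $S^{n-2}$, and the $L^{n-1,1}$-strength of the resulting endpoint is obtained from the trivial embedding $L^{n-1,1}\hookrightarrow L^{n-1}$.

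With \eqref{e2.15} and its $n$ symmetric variants in hand, Proposition \ref{p4.2} follows from the multilinear real interpolation apparatus of Sections \ref{sec5}--\ref{sec9}. The admissible region of the proposition is precisely the open simplex spanned by the $n$ vertices, and $n$-fold real interpolation from second Lorentz index $1$ produces second index $n$ in each factor, matching $\prod_{j=1}^n\|f_j\|_{p_j,n}$ exactly.

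The main obstacle, as in the parallel argument of Section \ref{sec3}, is the clean execution of the inductive step for \eqref{e2.15}: one must exploit Lemma \ref{L2.10} to split $|G(\omega_1,\dots,\omega_k)|^{(k-n)/2}$ into an $\omega_1$-part and a piece that literally reproduces the kernel of $B_{k-1,n-1}$ in the $\eta_j$ variables, and then to verify that the inductive endpoint exponents $(n-2)/(k-2)$ and second Lorentz index $1$ close up under the residual $\omega_1$-integration to return the target exponents $(n-1)/(k-1)$ with second index $1$. Once this exponent and normalization bookkeeping is in place, the remainder of the argument is a standard convex-combination interpolation on the simplex.
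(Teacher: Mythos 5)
Your overall architecture matches the paper's: use Lemma \ref{L2.6} to pass to the Gram-weighted representation, spherical coordinates around $\omega_1$, Lemma \ref{L2.10} to factor the Gram determinant, a slicing operator $F=C_k$ to land in the $(k-1,n-1)$-setting, the weak-type estimate of Lemma \ref{L2.9} (via \eqref{e2.17}) to undo $C_k$, and then multilinear interpolation from the asymmetric endpoint \eqref{e2.15} and its permutations to reach the full open simplex. That part is correct and is what the paper does.

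The gap is in what you apply to the sliced form. After the reduction, the inner integral is $\int_{G_{k-1,n-1}}\prod_{j=2}^n S_{k-1,n-1}(F(f_j(O\cdot)))(\theta)\,d\sigma(\theta) = B_{k-1,n-1}(F(f_2(O\cdot)),\dots,F(f_n(O\cdot)))$. You propose to apply the endpoint \eqref{e2.15} at level $(k-1,n-1)$, which puts one $F(f_{j_0})$ in $L^1(S^{n-2})$ and the others in $L^{(n-2)/(k-2),1}(S^{n-2})$. But \eqref{e2.17} with $\alpha=k$ maps $L^{(n-1)/(k-1),1}(S^{n-1})\to L^{(n-1)/(k-1)}(S^{n-2})$; it gives no bound on $\|F(f_j)\|_{(n-2)/(k-2),1}$, and since $(n-2)/(k-2)>(n-1)/(k-1)$ with also a second Lorentz index $1$ demanded in the target, you would need a strictly stronger (and unproved) boundedness of $C_k$. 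There is no mechanism in your argument by which the $(n-2)/(k-2)$ exponents ``close up''; you flag this as the main obstacle but do not resolve it. The paper avoids it by taking Proposition \ref{p4.1} at level $(k-1,n-1)$ as the induction hypothesis (equivalently Theorem \ref{the2} for $S_{k-1,n-1}$): one first applies H\"older's inequality on $G_{k-1,n-1}$ with the symmetric exponent $n-1$, giving $\prod_{j=2}^n\|S_{k-1,n-1}(F(f_j(O\cdot)))\|_{L^{n-1}}$, then the $L^{(n-1)/(k-1),n-1}\to L^{n-1}$ bound, then $\|\cdot\|_{(n-1)/(k-1),n-1}\leq\|\cdot\|_{(n-1)/(k-1)}$, and finally \eqref{e2.17}. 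This produces precisely the exponent $(n-1)/(k-1)$ that \eqref{e2.17} can handle. The logic is then a cycle: Proposition \ref{p4.1}$_{k-1}$ $\Rightarrow$ \eqref{e2.15}$_{k}$ $\Rightarrow$ Proposition \ref{p4.2}$_{k}$ $\Rightarrow$ (specializing $p_j=n/k$) Proposition \ref{p4.1}$_{k}$, seeded by the H\"older case $k=1$. You should replace ``inductive form of \eqref{e2.15} at level $(k-1,n-1)$'' with ``Proposition \ref{p4.1} at level $(k-1,n-1)$ together with H\"older's inequality on $G_{k-1,n-1}$,'' and then the rest of your outline goes through.
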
 

\par 
 For $k\geq 2$, we show that 
\begin{equation}\label{e2.15} 
\left|B_{k,n}( f_1, \dots, f_n)\right| \leq C\|f_1\|_1 
\prod_{j=2}^n \|f_j\|_{(n-1)/(k-1),1}, 
\end{equation}  
which implies Proposition \ref{p4.2} by interpolation. 
See Section 5 for the interpolation arguments.   
\par 
Let $1<\alpha<n$,  $\widetilde{\omega} \in S^{n-2}$ and 
\begin{equation*} 
C_\alpha f(\widetilde{\omega})=
\int_0^{\pi} f(\cos t, (\sin t)\widetilde{\omega})|\sin t|^{\alpha-2}
\, dt. 
\end{equation*} 
Suppose that a function $g$ satisfies 
\begin{equation} \label{ftng}
g(\cos t, (\sin t)\widetilde{\omega})=g(0,\widetilde{\omega}), \quad 
0\leq t\leq \pi, \quad \widetilde{\omega}\in S^{n-2}. 
\end{equation} 
Then we have 
\begin{align*} 
&\left|\int_{S^{n-2}} g(0,\widetilde{\omega})
C_\alpha f(\widetilde{\omega})\, d\widetilde{\omega} \right| 
\\ 
&= \left|\int_{S^{n-2}} 
\int_0^{\pi} g(\cos t,(\sin t)\widetilde{\omega})
f(\cos t, (\sin t)\widetilde{\omega})|\sin t|^{\alpha-2}
\, dt \, d\widetilde{\omega}\right|    
\\ 
&=c\left|\int_{S^{n-1}} g(\omega) f(\omega)|\omega'|^{\alpha-n} \, d\omega
\right| 
\\ 
&\leq c\|f\|_{p, 1}\|g(\omega)|\omega'|^{\alpha-n}\|_{p',\infty}, 
\end{align*}
where $\omega=(\omega^{(1)}, \omega')$, $\omega'\in \Bbb R^{n-1}$, 
$p=(n-1)/(\alpha-1)$.  
We need the inequality 
\begin{equation}\label{e2.17}  
\|C_\alpha f\|_{L^{(n-1)/(\alpha-1)}(S^{n-2})}
 \leq C\|f\|_{L^{(n-1)/(\alpha-1),1}(S^{n-1})},  
\end{equation} 
which can be shown by applying the following lemma in the estimates above. 

\begin{lemma}\label{L2.9} 
We have 
$$\|g(\omega)|\omega'|^{\alpha-n}\|_{(n-1)/(n-\alpha),\infty}\leq 
C\left(\int_{S^{n-2}} |g(0,\widetilde{\omega})|^{(n-1)/(n-\alpha)}
\, d\widetilde{\omega}\right)^{(n-\alpha)/(n-1)},   $$ 
where $g$ is assumed to satisfy \eqref{ftng} and $1<\alpha<n$. 
\end{lemma}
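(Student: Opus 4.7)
The plan is to prove the lemma by directly computing (in fact, estimating) the distribution function of $F(\omega) := g(\omega)|\omega'|^{\alpha-n}$ on $S^{n-1}$, exploiting the parameterization of the sphere adapted to its first coordinate. First I would write $\omega = (\cos t,(\sin t)\widetilde{\omega})$ with $t\in[0,\pi]$ and $\widetilde{\omega}\in S^{n-2}$, so that $|\omega'|=\sin t$ and $d\omega=(\sin t)^{n-2}\,dt\,d\widetilde{\omega}$. The invariance hypothesis \eqref{ftng} collapses $F$ to the separated form
$$|F(\omega)|=|g(0,\widetilde{\omega})|(\sin t)^{\alpha-n},$$
so the super-level set $\{|F|>\lambda\}$ translates exactly into the condition $\sin t<s(\widetilde{\omega})$, where $s(\widetilde{\omega}):=(|g(0,\widetilde{\omega})|/\lambda)^{1/(n-\alpha)}$ (using $n-\alpha>0$).

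The technical core of the argument is the one-dimensional bound
$$\int_0^{\pi}\chi_{\{\sin t<s\}}(\sin t)^{n-2}\,dt\le C\,s^{n-1}\qquad(s>0).$$
For $0<s\le 1$ this follows by estimating $(\sin t)^{n-2}\le s^{n-2}$ on the relevant set and combining with $|\{t\in[0,\pi]:\sin t<s\}|\le 2\arcsin s\le \pi s$; for $s\ge 1$ the total mass $\int_0^{\pi}(\sin t)^{n-2}\,dt$ is a finite dimensional constant dominated by $s^{n-1}$. Applying this bound with $s=s(\widetilde{\omega})$ and then integrating in $\widetilde{\omega}\in S^{n-2}$ via Fubini yields
$$\bigl|\{\omega\in S^{n-1}:|F(\omega)|>\lambda\}\bigr|\le C\lambda^{-\frac{n-1}{n-\alpha}}\int_{S^{n-2}}|g(0,\widetilde{\omega})|^{\frac{n-1}{n-\alpha}}\,d\widetilde{\omega}.$$

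Finally, raising this inequality to the power $(n-\alpha)/(n-1)$, multiplying by $\lambda$, and taking the supremum over $\lambda>0$ gives precisely the claimed estimate in the weak Lorentz quasi-norm $\|\cdot\|_{(n-1)/(n-\alpha),\infty}$. I do not expect any serious obstacle: the only point requiring care is the pointwise estimate on the one-dimensional integral above, which must be uniform in $s$ on $(0,\infty)$ with the correct power $s^{n-1}$ so that the resulting distributional inequality has the correct dependence on $\lambda$. Everything else is bookkeeping (the exponents $(n-1)/(n-\alpha)$ arise from balancing $(\sin t)^{n-2}$ from the surface measure against $(\sin t)^{\alpha-n}$ from the weight), and the hypothesis $1<\alpha<n$ guarantees that both exponents appearing in the statement are finite and positive.
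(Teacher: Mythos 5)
Your proposal is correct and follows essentially the same approach as the paper: parameterize $S^{n-1}$ via $(\cos t,(\sin t)\widetilde{\omega})$, use the invariance hypothesis \eqref{ftng} to separate variables, and bound the resulting distribution function. The only difference is cosmetic: you bound the inner one-dimensional integral $\int_0^{\pi}\chi_{\{\sin t<s\}}(\sin t)^{n-2}\,dt$ by the sup-of-integrand-times-measure argument, while the paper substitutes $u=\sin t$ and estimates $\int_0^{\min(1,s)}u^{n-2}(1-u^2)^{-1/2}\,du\le Cs^{n-1}$; both yield the same $s^{n-1}$ bound.
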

\begin{proof} Let $\lambda>0$.  We have 
\begin{align*} 
&\left|\left\{\omega\in S^{n-1}: |\omega'|^{\alpha-n}  
\left|g(\omega)\right|>\lambda \right\}\right| 
=\int_{S^{n-1}}\chi_{[1,\infty)}\left(\lambda^{-1}|\omega'|^{\alpha-n}
|g(\omega)|\right)\, d\omega  
\\ 
&=c\int_{S^{n-2}}\int_0^\pi  
\chi_{[1,\infty)}\left(\lambda^{-1}(\sin t)^{\alpha-n}|
g(\cos t,(\sin t)\widetilde{\omega})|\right)(\sin t)^{n-2}\, dt  
\, d\widetilde{\omega} 
\\ 
&=c\int_{S^{n-2}}\int_0^\pi  
\chi_{[1,\infty)}\left(\lambda^{-1}(\sin t)^{\alpha-n}|
g(0,\widetilde{\omega})|\right)(\sin t)^{n-2}\, dt
\, d\widetilde{\omega},  
\end{align*} 
which is equal to 
\begin{align*} 
&c\int_{S^{n-2}}\int_0^\pi
\chi_{\left(0, (\lambda^{-1}|g(0,\widetilde{\omega})|)^{1/(n-\alpha)}\right]}
(\sin t) (\sin t)^{n-2}  \, dt\, d\widetilde{\omega}
\\ 
& 
=2c\int_{S^{n-2}}\int_0^{\pi/2}
\chi_{\left(0, (\lambda^{-1}|
g(0,\widetilde{\omega})|)^{1/(n-\alpha)}\right]}(\sin t) 
(\sin t)^{n-2}  \, dt\, d\widetilde{\omega} =:I.  
\end{align*} 
Changing variables, we see that 
\begin{align*} 
&I=2c\int_{S^{n-2}}\int_0^{1}
\chi_{\left(0, (\lambda^{-1}|
g(0,\widetilde{\omega})|)^{1/(n-\alpha)}\right]}(u) 
u^{n-2} (1-u^2)^{-1/2} \, du\, d\widetilde{\omega} 
\\ 
&
= 2c\int_{S^{n-2}}
\int_0^{\min\left(1, (\lambda^{-1}|g(0,\widetilde{\omega})|)^{1/(n-\alpha)}
\right)} u^{n-2} (1-u^2)^{-1/2} \, du\, d\widetilde{\omega}
\\ 
&
\leq C\int_{S^{n-2}}(\lambda^{-1}|g(0,\widetilde{\omega})|)^{(n-1)/(n-\alpha)}
\, d\widetilde{\omega}.     
\end{align*} 
This completes the proof. 
\end{proof}
\par 
We assume Proposition \ref{p4.1} for $B_{k-1,m}$, $m>k-1$, and prove 
\eqref{e2.15} for $B_{k,n}$, $2 \leq k<n$.  This proves Proposition \ref{p4.2} 
for $B_{k,n}$, $2 \leq k<n$ by interpolation.  
Since Proposition \ref{p4.1} is true for $k=1$, this will give the proofs of 
Propositions \ref{p4.1} and \ref{p4.2} by induction. 
\par 
We note by Lemma \ref{L2.6} that 
\begin{align}\label{e4.1}  
&B_{k,n}(f_1, \dots, f_n) 
\\ 
&=\int f_1(\omega_1)\dots f_k(\omega_k)\left(\prod_{j=k+1}^n 
\int_{S^{k-1}_\theta} f_j(\omega_j)\, d\lambda_{\theta}(\omega_j) \right) 
 d\lambda_\theta(\omega_1) \dots \, d\lambda_\theta(\omega_k)\, 
d\sigma(\theta)                            \notag 
\\ 
&=c\int f_1(\omega_1)\dots f_k(\omega_k)\left(\prod_{j=k+1}^n 
\int_{S^{k-1}_{\theta(\omega_1, \dots, \omega_k)}} 
f_j(\omega_j)\, d\lambda_{\theta(\omega_1, \dots, \omega_k)}(\omega_j) \right) 
 \notag 
\\ 
&\quad\quad \times \left|G(\omega_1, \dots, \omega_k)\right|^{\frac{k-n}{2}} 
\, d\omega_1 \dots\, d\omega_k.      \notag 
\end{align} 
\par 
Let $\omega_1=e_1$ and write 
$\omega_\ell=(\cos t_\ell,(\sin t_\ell)\widetilde{\omega}_\ell)$, 
$0<t_\ell\leq \pi$, $\widetilde{\omega}_\ell\in S^{n-2}$ 
for $2\leq \ell\leq k$. 
Then, for $j\geq k+1$, 
\begin{multline*} 
\int_{S^{k-1}_{\theta(\omega_1, \dots, \omega_k)}} 
f_j(\omega_j)\, d\lambda_{\theta(\omega_1, \dots, \omega_k)}(\omega_j) 
\\ 
=c\int_{S^{k-2}_{\theta(\tilde{\omega}_2, \dots, \tilde{\omega}_k)}}
\int_0^\pi f_j(\cos t, (\sin t)\widetilde{\omega}_j)(\sin t)^{k-2}\, dt 
\, d\lambda_{\theta(\tilde{\omega}_2, \dots, \tilde{\omega}_k)}
(\widetilde{\omega}_j).  
\end{multline*}  
Let 
\begin{equation*}  
F(f_j)(\widetilde{\omega}_j)=C_k(f_j)(\widetilde{\omega}_j)
=\int_0^\pi f_j(\cos t, (\sin t)\tilde{\omega}_j)(\sin t)^{k-2}\, dt, 
\quad \tilde{\omega}_j \in S^{n-2}  
\end{equation*}  
for $2\leq j\leq n$. 
Define 
\begin{multline*}  
\Omega(\omega_1, \omega_2)=\Omega(\omega_1, \omega_2)(f_3, \dots, f_n)
\\ 
=
\int f_3(\omega_3)\dots f_k(\omega_k) 
\\ 
\times \prod_{j=k+1}^n 
\int_{S^{k-1}_{\theta(\omega_1,\dots, \omega_k)}} 
f_j(\omega_j)\, d\lambda_{\theta(\omega_1,\dots, \omega_k)}(\omega_j)
|G(\omega_1, \dots, \omega_k)|^{(k-n)/2}\, d\omega_3\dots \,d\omega_k 
\end{multline*}  
for $k\geq 3$; let 
\begin{multline*}  
\Omega(\omega_1, \omega_2)=\Omega(\omega_1, \omega_2)(f_3, \dots, f_n)
\\ 
= \prod_{j=3}^n 
\int_{S^{1}_{\theta(\omega_1,\omega_2)}} 
f_j(\omega_j)\, d\lambda_{\theta(\omega_1,\omega_2)}(\omega_j)
|G(\omega_1,\omega_2)|^{(2-n)/2} 
\end{multline*}  
when $k=2$.

We need the following result. 

\begin{lemma} \label{L2.10}
Let $\omega_1=e_1=(1, 0, \dots,0)$ and $\omega_j=(\cos t_j, 
(\sin t_j)\widetilde{\omega}_j)$, $\widetilde{\omega}_j\in S^{n-2}$, 
$2\leq j\leq k$. Then 
$$G(\omega_1,\omega_2, \dots, \omega_k)
=\sin^2 t_2 \dots \sin^2 t_k
G(\widetilde{\omega}_2, \dots, \widetilde{\omega}_k). $$ 
\end{lemma}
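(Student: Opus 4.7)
The plan is to reduce the computation to a block-diagonal Gram matrix by standard column operations, and then exploit the scaling invariance of the Gram determinant already recorded in the paper.

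First I would use the decomposition $\omega_j = (\cos t_j)\,\omega_1 + (\sin t_j)(0,\widetilde{\omega}_j)$ valid for $2 \le j \le k$, and observe that replacing $\omega_j$ by $\omega_j - (\cos t_j)\omega_1 = (\sin t_j)(0,\widetilde\omega_j)$ leaves the Gram determinant unchanged. This is because such a substitution amounts to adding $-(\cos t_j)$ times the first row to the $j$-th row and $-(\cos t_j)$ times the first column to the $j$-th column of $G_0(\omega_1,\dots,\omega_k)$; each such symmetric pair of elementary operations preserves $\det$. Hence
\[
G(\omega_1,\omega_2,\dots,\omega_k) = G\bigl(\omega_1,(\sin t_2)(0,\widetilde\omega_2),\dots,(\sin t_k)(0,\widetilde\omega_k)\bigr).
\]

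Next I would note that $\omega_1 = e_1$ is orthogonal in $\mathbb{R}^n$ to every vector of the form $(0,\widetilde\omega_j)$. Therefore the Gram matrix on the right is block diagonal, with the $(1,1)$-block equal to the scalar $\langle\omega_1,\omega_1\rangle = 1$ and the lower $(k-1)\times(k-1)$ block equal to the Gram matrix of $(\sin t_2)(0,\widetilde\omega_2),\dots,(\sin t_k)(0,\widetilde\omega_k)$. Taking determinants gives
\[
G(\omega_1,\dots,\omega_k) = G\bigl((\sin t_2)(0,\widetilde\omega_2),\dots,(\sin t_k)(0,\widetilde\omega_k)\bigr).
\]

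Finally I would apply the homogeneity formula $G(\alpha_1 v_1,\dots,\alpha_{k-1}v_{k-1}) = \alpha_1^2\cdots\alpha_{k-1}^2\, G(v_1,\dots,v_{k-1})$ recalled in the paper to pull out the factor $\sin^2 t_2 \cdots \sin^2 t_k$, leaving $G\bigl((0,\widetilde\omega_2),\dots,(0,\widetilde\omega_k)\bigr)$. Since $\langle (0,\widetilde\omega_\ell),(0,\widetilde\omega_m)\rangle_{\mathbb{R}^n} = \langle \widetilde\omega_\ell,\widetilde\omega_m\rangle_{\mathbb{R}^{n-1}}$, this last Gram determinant equals $G(\widetilde\omega_2,\dots,\widetilde\omega_k)$, yielding the claimed identity. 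No step here is really an obstacle; the only point requiring a moment of care is the verification that the simultaneous row-and-column elementary operation on the symmetric matrix $G_0$ does preserve its determinant, but this follows at once from writing the change of basis matrix as a product of unit-triangular matrices acting on both sides.
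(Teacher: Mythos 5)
Your proof is correct and takes essentially the same approach as the paper: both amount to subtracting $\cos t_j$ times the first vector/row from the $j$-th, using that $\omega_1 = e_1$ is then orthogonal to the modified vectors, and factoring out the sines. The paper works directly on the entries of the Gram determinant (row operations followed by cofactor expansion along the first column), whereas you reformulate the same steps at the vector level via the invariance of $G$ under replacing $\omega_j$ by $\omega_j - (\cos t_j)\omega_1$, block-diagonality, and the scaling identity for $G$ already recorded in the paper; this is a slightly cleaner packaging of the identical computation.
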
 
\begin{proof} 
We have 
$$\langle \omega_j, \omega_\ell \rangle = \cos t_j\cos t_\ell + 
\sin t_j\sin t_\ell\langle \widetilde{\omega}_j, 
\widetilde{\omega}_\ell \rangle,   $$ 
where $\langle \widetilde{\omega}_j, \widetilde{\omega}_\ell \rangle$ denotes 
the inner product in $\Bbb R^{n-1}$. 
Thus $G(\omega_1,\omega_2, \dots, \omega_k)$ is equal to 
\begin{align*}
  &\begin{vmatrix} 
 1 & \cos t_2  & \cos t_3 &\dots       &  \cos t_k \\ 
  \cos t_2  & \langle \omega_2, \omega_2 \rangle & 
\langle \omega_2, \omega_3 \rangle   &\dots       & 
 \langle \omega_2, \omega_k \rangle \\ 
  \cos t_3 & \langle \omega_3, \omega_2 \rangle & 
\langle \omega_3, \omega_3 \rangle   &\dots       & 
 \langle \omega_3, \omega_k \rangle \\ 
\hdotsfor{5}      \\ 
\hdotsfor{5}      \\ 
\cos t_k & \langle \omega_k, \omega_2 \rangle  
&\langle \omega_k, \omega_3 \rangle &\dots       &  
\langle \omega_k, \omega_k \rangle \\ 
\end{vmatrix}  
\\ 
\vspace{5mm} 
\\ 
&=\begin{vmatrix} 
 1 & \cos t_2  & \cos t_3 &\dots       &  \cos t_k \\ 
 0  & 1-\cos^2 t_2 & 
\langle \omega_2, \omega_3 \rangle -\cos t_2\cos t_3  &\dots       & 
 \langle \omega_2, \omega_k \rangle - \cos t_2\cos t_k \\ 
0& \langle \omega_3, \omega_2 \rangle -\cos t_3\cos t_2 & 
1- \cos^2 t_3 &\dots       & 
 \langle \omega_3, \omega_k \rangle - \cos t_3\cos t_k
\\
\hdotsfor{5}      \\ 
\hdotsfor{5}      \\ 
0 & \langle \omega_k, \omega_2 \rangle -\cos t_k\cos t_2
&\langle \omega_k, \omega_3 \rangle -\cos t_k\cos t_3 &\dots       &  
1 -\cos^2 t_k\\ 
\end{vmatrix}   
\\ 
\vspace{5mm} 
\\ 
&=\begin{vmatrix} 
 \sin^2 t_2 & \sin t_2 \sin t_3\langle \widetilde{\omega}_2, 
\widetilde{\omega}_3 \rangle  & \dots       &  
\sin t_2 \sin t_k\langle \widetilde{\omega}_2, 
\widetilde{\omega}_k \rangle  \\ 
\sin t_3 \sin t_2\langle \widetilde{\omega}_3, 
\widetilde{\omega}_2 \rangle  
  & \sin^2 t_3 &  \dots       & 
\sin t_3 \sin t_k\langle \widetilde{\omega}_3, 
\widetilde{\omega}_k\rangle   
\\
\hdotsfor{4}      \\ 
\hdotsfor{4}      \\ 
\sin t_k \sin t_2\langle \widetilde{\omega}_k, 
\widetilde{\omega}_2\rangle
& \sin t_k \sin t_3\langle \widetilde{\omega}_k, 
\widetilde{\omega}_3\rangle &\dots      &  
\sin^2 t_k\\ 
\end{vmatrix}   
\\ 
\vspace{5mm} 
\\ 
&=\sin^2 t_2\sin^2 t_3 \dots \sin^2 t_k
\begin{vmatrix} 
1 & \langle \widetilde{\omega}_2, 
\widetilde{\omega}_3 \rangle  & \dots       &  
\langle \widetilde{\omega}_2, \widetilde{\omega}_k \rangle  \\ 
\langle \widetilde{\omega}_3, \widetilde{\omega}_2 \rangle  
  & 1 &  \dots       & 
\langle \widetilde{\omega}_3, \widetilde{\omega}_k\rangle   
\\
\hdotsfor{4}      \\ 
\hdotsfor{4}      \\ 
\langle \widetilde{\omega}_k, \widetilde{\omega}_2\rangle
& \langle \widetilde{\omega}_k, \widetilde{\omega}_3\rangle &\dots      &  
1\\ 
\end{vmatrix}   
\\ 
\vspace{5mm} 
\\ 
&=\sin^2 t_2\sin^2 t_3 \dots \sin^2 t_k 
 G(\widetilde{\omega}_2, \dots, \widetilde{\omega}_k). 
\end{align*} 
This completes the proof. 
\end{proof} 
\par 
Choose $O\in {\rm SO}(n)$ so that $O^{-1}\omega_1=e_1$. 
Then, by changing variables and applying Lemma \ref{L2.10}, we have 
\begin{multline*} 
\int f_2(\omega_2)\Omega(\omega_1, \omega_2)\, d\omega_2 
=c\int_{(S^{n-2})^{k-1}} F(f_2(O\cdot))(\widetilde{\omega}_2)\dots  
F(f_k(O\cdot))(\widetilde{\omega}_k) 
\\ 
\times \prod_{j=k+1}^n 
\int_{S^{k-2}_{\theta(\tilde{\omega}_2,\dots, \tilde{\omega}_k)}} 
F(f_j(O\cdot))(\widetilde{\omega}_j)\, 
d\lambda_{\theta(\widetilde{\omega}_2,\dots, \widetilde{\omega}_k)}
(\widetilde{\omega}_j) 
\\ 
\times |G(\widetilde{\omega}_2, \dots, \widetilde{\omega}_k)|^{(k-n)/2}\, 
d\widetilde{\omega}_2\dots \,d\widetilde{\omega}_k. 
\end{multline*}  
By Lemma \ref{L2.6} this is equal to 

\begin{multline*} 
c\int_{G_{k-1,n-1}}\int_{(S^{k-2}_{\theta})^{k-1}} 
F(f_2(O\cdot))(\widetilde{\omega}_2)\dots  
F(f_k(O\cdot))(\widetilde{\omega}_k) 
\\ 
\times \left(\prod_{j=k+1}^n 
\int_{S^{k-2}_{\theta}} 
F(f_j(O\cdot))(\widetilde{\omega}_j)\, 
d\lambda_{\theta}(\widetilde{\omega}_j)\right) 
d\lambda_{\theta}(\widetilde{\omega}_2) \dots 
d\lambda_{\theta}(\widetilde{\omega}_k)\, d\sigma(\theta) 
\\ 
= c\int_{G_{k-1,n-1}} \prod_{j=2}^n  S_{k-1,n-1}(F(f_j(O\cdot)))(\theta) 
\, d\sigma(\theta).  
\end{multline*}  
We have Proposition \ref{p4.1} for $B_{k-1,m}$, $m>k-1$, as the induction 
hypothesis and hence we have the inequality of Theorem \ref{the2} for 
$S_{k-1,m}$.
Thus by H\"older's inequality and the induction hypothesis, we have 

\begin{align*} 
\left|\int_{G_{k-1,n-1}} \prod_{j=2}^n  S_{k-1,n-1}(F(f_j(O\cdot)))(\theta) 
\, d\sigma(\theta)\right|
&\leq  \prod_{j=2}^n  \|S_{k-1,n-1}(F(f_j(O\cdot)))\|_{n-1}
\\  
&\leq  C\prod_{j=2}^n  \|F(f_j(O\cdot))\|_{(n-1)/(k-1), n-1} 
\\ 
&\leq  C\prod_{j=2}^n  \|F(f_j(O\cdot))\|_{(n-1)/(k-1)}.  
\end{align*} 
Applying \eqref{e2.17} with $\alpha=k$,  
\begin{equation*}  
 \|F(f_j(O\cdot))\|_{(n-1)/(k-1)}\leq C\|f_j\|_{(n-1)/(k-1),1}.  
\end{equation*}  
This estimate is uniform in $O$ and hence in $\omega_1$. 
Thus we have 
\begin{equation*}  
\left|\int f_1(\omega_1)f_2(\omega_2)\Omega(\omega_1, \omega_2)\, d\omega_1 
\, d\omega_2\right|  
\leq C \|f_1\|_1\prod_{j=2}^n  \|f_j\|_{(n-1)/(k-1), 1}. 
\end{equation*}
By \eqref{e4.1} this proves \eqref{e2.15} for $B_{k,n}$.

\section{Interpolation arguments and proofs of Propositions \ref{prop1.3}and 
\ref{p4.2}}\label{sec5}  

 By \eqref{e10} we can show the following result, which will be used 
 in proving Proposition \ref{prop1.3}. 
\begin{lemma}\label{L1.6} 
Suppose that $1\leq k<n$.  
Let $1<p_0, p_1, \dots, p_n<\frac{n}{k}$, $\sum_{j=0}^n \frac{1}{p_j}=
k+1$. Then 
$$\left|A_{k,n}(f_0, f_1, \dots , f_n)\right| 
\leq C \|f_0\|_{p_0, 1}\|f_1\|_{p_1, \infty}
\dots \|f_n\|_{p_n, \infty}.   $$ 
\end{lemma}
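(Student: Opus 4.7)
The plan is to derive Lemma \ref{L1.6} from the endpoint bound \eqref{e10} by multilinear real-method interpolation, using the machinery to be developed in Sections \ref{sec5}--\ref{sec9}. The essential starting observation is that the multilinear form $A_{k,n}(f_0,\dots,f_n)$ is fully symmetric in its $n+1$ arguments, since the integrand $\prod_{j=0}^n T_{k,n}f_j(x,\theta)$ depends on the unordered collection $\{f_0,\dots,f_n\}$. Thus \eqref{e10} immediately upgrades to $n+1$ ``vertex'' estimates: for each $i\in\{0,1,\dots,n\}$,
$$|A_{k,n}(f_0,\dots,f_n)|\leq C\,\|f_i\|_1\prod_{j\neq i}\|f_j\|_{n/k,1}.$$
In the exponent simplex parameterized by the reciprocal tuple $(1/p_0,\dots,1/p_n)$, these correspond to $n+1$ vertices $V_i$ whose $i$-th entry equals $1$ and whose other entries equal $k/n$; each $V_i$ automatically satisfies $\sum_j 1/p_j=k+1$.

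The next step is to verify that the target tuple $(1/p_0,\dots,1/p_n)$ of the lemma is a strict convex combination of $V_0,\dots,V_n$. Writing $1/p_j=k/n+\alpha_j$, the scaling identity $\sum 1/p_j=k+1$ becomes $\sum_j\alpha_j=(n-k)/n$, while $1<p_j<n/k$ translates to $0<\alpha_j<(n-k)/n$. Setting $t_j=n\alpha_j/(n-k)$ yields strictly positive weights summing to $1$ with $(1/p_0,\dots,1/p_n)=\sum_i t_iV_i$. Hence the target is a genuine interior point of the simplex spanned by the vertex bounds.

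With the vertex estimates in hand, the remaining work is to combine them by multilinear interpolation so that the distinguished factor $f_0$ carries the fine Lorentz exponent $1$ while the others carry the coarse exponent $\infty$. I would proceed one coordinate at a time. Freezing $f_1,\dots,f_n$ and viewing $A_{k,n}$ as a linear functional in $f_0$, the two families of vertex estimates ($i=0$ versus $i\geq 1$) provide strong $L^1$ and $L^{n/k,1}$ bounds on $f_0$; Hunt-type real interpolation between Lorentz spaces then yields an $\|f_0\|_{p_0,1}$ bound, with the dependence on $(f_1,\dots,f_n)$ controlled by intermediate Lorentz norms on those factors. Iterating this scheme successively for $f_1,\dots,f_n$, each subsequent interpolation converts the current fine exponent on that factor into the coarse exponent $\infty$ (since one is now interpolating a weak-type linear bound), and after $n$ further steps one arrives exactly at the desired conclusion
$$|A_{k,n}(f_0,\dots,f_n)|\leq C\,\|f_0\|_{p_0,1}\prod_{j=1}^n\|f_j\|_{p_j,\infty}.$$

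The main obstacle is the careful packaging of this multilinear interpolation so that the asymmetric mix of Lorentz indices emerges correctly from iterated applications of Hunt's theorem. In particular, one must formulate the intermediate steps as estimates on mixed Lorentz spaces and keep track of which coordinates have already been interpolated (receiving index $\infty$) and which remain at the fine endpoint (receiving index $1$); one must also check at each stage that the intermediate exponents lie strictly inside $(1,\infty)$, which is guaranteed by the strict inequalities $1<p_j<n/k$. This delicate bookkeeping, rather than any single interpolation step, is precisely the content of the interpolation results to be established in Sections \ref{sec5}--\ref{sec9}, which will be invoked here to complete the proof.
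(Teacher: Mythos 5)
Your proposal begins exactly where the paper does: use \eqref{e10} together with the symmetry of $A_{k,n}$ to get the $n+1$ ``vertex'' estimates, each of the form $|A_{k,n}|\leq C\|f_i\|_1\prod_{j\neq i}\|f_j\|_{n/k,1}$, and your convex-geometry check that the target tuple $(1/p_j)$ is a strict interior combination of the vertices $V_i$ is correct and essentially the same observation the paper makes (it underlies the construction of the sequence $u_1,\ldots,u_n$). Up to that point the routes coincide.

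The gap lies in the ``one coordinate at a time'' linear iteration. Any two distinct vertex estimates $V_0$ and $V_i$, $i\geq 1$, differ in \emph{two} coordinates simultaneously: $V_0$ puts $L^1$ on $f_0$ and $L^{n/k,1}$ on $f_i$, while $V_i$ does the reverse. Thus there is no pair of available bounds that differ only in the $f_0$-exponent; if you freeze $f_1,\ldots,f_n$ and interpolate linearly in $f_0$, the two ``constants'' $M_0$ and $M_1$ depend differently on $f_i$, and the interpolation yields a factor $\|f_i\|_1^{\theta}\|f_i\|_{n/k,1}^{1-\theta}$. That geometric mean of two Lorentz norms is not itself a Lorentz norm, so the output is not a multilinear Lorentz estimate of the form required to iterate. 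Your phrase ``the dependence on $(f_1,\ldots,f_n)$ controlled by intermediate Lorentz norms'' is silently assuming exactly the structure that fails. This is the reason the paper's Lemma~\ref{L1.7} is a \emph{bilinear} real-interpolation statement: at each step one interpolates simultaneously in a pair $(f_j,f_{j'})$ (the two coordinates where the two starting bounds disagree), with the other $n-1$ factors genuinely frozen at identical exponents so that $M_0=M_1$ and no geometric mean appears. The constraint $1/v+1/w=1$ in Lemma~\ref{L1.7} is what forces one factor of the pair to come out with Lorentz index $\infty$ and the other with index $1$, and the auxiliary sequence $u_1,\ldots,u_n$ tracks the exponent that is ``carried forward'' to the next pairing; none of this mechanism is present in your sketch. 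To repair the proposal you would need to replace the single-coordinate Hunt interpolation by the bilinear interpolation of Lemma~\ref{L1.7}, applied iteratively to pairs $(f_0,f_1)$, $(f_1,f_2)$, \ldots, $(f_{n-1},f_n)$ together with the symmetrized versions of each intermediate estimate, and verify that the intermediate exponents $1/u_j$ stay strictly between $k/n$ and $1$ --- which is precisely what the strict inequalities $1<p_j<n/k$ are for.
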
 
When $p=\infty$, we consider only the case $q=\infty$ in $L^{p,q}$. 
We need the following interpolation results.   

\begin{lemma}\label{L1.7} 
Let $1\leq v, w \leq \infty$ and 
$$\frac{1}{v}+\frac{1}{w}=1.  $$ 
Let $0<\theta<1$.   
Let $1\leq s_i, u_i, a_i, b_i\leq \infty$, $i=0, 1$. 
We assume that $a_i=1$ if $s_i=1$ and that $b_i=1$ if $u_i=1$. 
Define 
$A_i=L^{s_i, a_i}(\Bbb R^n)$, $B_i=L^{u_i, b_i}(\Bbb R^n)$, $i=0, 1$, and 
$\bar{A}=(A_0, A_1)$, $\bar{B}=(B_0, B_1)$. 
 Then $\bar{A}_{\theta, v}=L^{s,v}(\Bbb R^n)$, where 
$$\frac{1}{s}=\frac{1-\theta}{s_0}+\frac{\theta}{s_1}$$  
and we require 
$$\frac{1}{v}=\frac{1-\theta}{a_0}+\frac{\theta}{a_1}$$ 
 if $s_0=s_1$ and also $\bar{B}_{\theta, w}
=L^{u,w}(\Bbb R^n)$, where 
$$\frac{1}{u}=\frac{1-\theta}{u_0}+\frac{\theta}{u_1}$$  
and we assume 
$$\frac{1}{w}=\frac{1-\theta}{b_0}+\frac{\theta}{b_1}$$
 if $u_0=u_1$.   $($See \cite[Chap. 3]{BL} for $\bar{A}_{\theta, v}$.$)$ 
 Suppose that $T: A_i\times B_i \to \Bbb C$ be 
a bilinear operator such that 
$$|T(f_1,f_2)|\leq M_i\|f_1\|_{A_i}\|f_2\|_{B_i}, \quad i=0, 1. $$  
We assume that 
$$T(f_1,f_2)= A_{k,n}(g_0, g_1, \dots , g_n)$$  
with $f_1=g_j$, $f_2=g_k$ for some fixed $j, k$, $j\neq k$; functions except
 for $g_j$, $g_k$ are fixed. Also,  
all functions $g_j$ are initially assumed to be continuous and 
compactly supported. 
Then 
$$|T(f_1,f_2)|\leq C M_1^{1-\theta}M_2^\theta
\|f_1\|_{\bar{A}_{\theta,v}}\|f_2\|_{\bar{B}_{\theta, w}}
=C M_1^{1-\theta}M_2^\theta
\|f_1\|_{L^{s,v}}\|f_2\|_{L^{u,w}}. $$ 
\end{lemma}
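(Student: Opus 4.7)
The plan is to split the lemma into two independent classical pieces and then combine them: (i) identifying the real interpolation spaces between Lorentz spaces as Lorentz spaces themselves, and (ii) a bilinear real-interpolation theorem converting the two diagonal boundedness hypotheses on $T$ into a single bound on the interpolated spaces. Once both are in place, the statement is immediate.

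For (i), I would compute the Peetre $K$-functional
\[
K(t, f; \bar A) = \inf_{f = f_0 + f_1}\bigl(\|f_0\|_{L^{s_0, a_0}} + t\|f_1\|_{L^{s_1, a_1}}\bigr)
\]
via the decreasing rearrangement $f^{*}$. A balancing argument shows that $K(t, f; \bar A)$ is equivalent, up to universal constants, to a sum of two weighted integrals of $f^{*}$ with a cut-off $\phi(t)$ chosen so that the two contributions match in size. Inserting this expression into $\|f\|_{\bar A_{\theta, v}} = \|t^{-\theta} K(t, f; \bar A)\|_{L^v(dt/t)}$ and simplifying the iterated integral yields $\|f\|_{\bar A_{\theta, v}} \sim \|f\|_{L^{s, v}}$; the matching hypothesis $1/v = (1-\theta)/a_0 + \theta/a_1$ in the diagonal case $s_0 = s_1$ is exactly what is needed to keep this integral finite. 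The identification $\bar B_{\theta, w} = L^{u, w}$ is analogous.

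For (ii), I would employ the $J$-method representation. With $J(t, u) = \max(\|u\|_{A_0}, t\|u\|_{A_1})$, every $a \in \bar A_{\theta, v}$ admits a representation $a = \int_0^\infty u(t)\,dt/t$ satisfying $\|t^{-\theta} J(t, u(t))\|_{L^v(dt/t)} \leq C\|a\|_{\bar A_{\theta, v}}$, and similarly for $b \in \bar B_{\theta, w}$. The two diagonal hypotheses on $T$ combine into the pointwise bound
\[
|T(u(t), v(s))| \leq \min\bigl(M_0,\, M_1(ts)^{-1}\bigr)\,J(t, u(t))\,J(s, v(s)).
\]
Setting $g(t) = t^{-\theta} J(t, u(t))$, $h(s) = s^{-\theta} J(s, v(s))$, and $\widetilde K(r) = \min(M_0 r^{\theta}, M_1 r^{\theta-1})$, we obtain
\[
|T(a, b)| \leq \int_0^\infty\!\!\int_0^\infty \widetilde K(ts)\,g(t)\,h(s)\,\frac{dt}{t}\,\frac{ds}{s}.
\]
A direct calculation gives $\|\widetilde K\|_{L^1(dr/r)} = M_0^{1-\theta} M_1^{\theta}/(\theta(1-\theta))$, and the logarithmic substitution $t = e^\tau$, $s = e^\sigma$ converts $\widetilde K(ts)$ into a function of $\tau + \sigma$, turning the double integral into a convolution form on the line. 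Young's convolution inequality---applicable precisely when $1/v + 1/w = 1$---then yields the bound $C M_0^{1-\theta} M_1^{\theta} \|a\|_{\bar A_{\theta, v}} \|b\|_{\bar B_{\theta, w}}$ after taking the infimum over admissible representations.

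The main obstacle is the technical care required in step (i): verifying that the $K$-functional identification is sharp enough to recover the Lorentz norm on the nose across all the boundary configurations (the matching identity in the diagonal case $s_0 = s_1$, the $a_i = 1$ restriction when $s_i = 1$, and the $p = \infty \Rightarrow q = \infty$ convention). Step (ii) is then largely mechanical, since the $\min$ kernel depends only on the product $ts$ and so is tailor-made for Young's inequality in logarithmic variables; the initial assumption that the $g_j$ are continuous of compact support ensures the $J$-representations converge absolutely, and the final statement extends to general inputs by density.
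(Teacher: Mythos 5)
Your proposal is correct and follows essentially the same route as the paper: both are $J$-method bilinear interpolation arguments in which a $J$-representation $a=\int_0^\infty u(t)\,dt/t$, $b=\int_0^\infty v(s)\,ds/s$ is inserted into the bilinear form, the two diagonal bounds on $T$ are merged into a kernel $\widetilde K(ts)=\min(M_0(ts)^\theta,M_1(ts)^{\theta-1})$, and the double integral is estimated using H\"older in $(dt/t,\,ds/s)$ with the constraint $1/v+1/w=1$. The paper's inequality \eqref{e1.9} together with the subsequent H\"older step is exactly your $\|\widetilde K\|_{L^1(dr/r)}$ balancing argument, just organized as an estimate for the marginal $w_\epsilon(t)=\int T(u_1(t/t_1),u_2(t_1))\,dt_1/t_1$ followed by a scalar bound on $\int w_\epsilon\,dt/t$; and your step (i) is a sketch of Theorem 5.3.1 of \cite{BL}, which the paper simply cites. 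The one place you are lighter than the paper is the justification that $T(a,b)$ equals the double integral $\iint T(u(t),v(s))\,dt/t\,ds/s$: the paper handles this carefully by restricting to representations $u$ that are finite sums $\sum_j F_j(x)m_j(t)$ and mollifying, then passing to the limit (together with Lemma \ref{L1.8}, which establishes that such restricted $J$-representations give an equivalent norm). Your appeal to ``absolute convergence'' plus density is the right idea, but it implicitly relies on Bochner integrability of $u$ and $v$ and on the equivalence of the $J$- and $K$-methods, so in a fully written proof you would still need the content of Lemma \ref{L1.8} or its analogue.
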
 
See \cite{LP} and \cite{Z} for results relevant to Lemma \ref{L1.7}.  
\par 
We also need the following. 

\begin{lemma}\label{L1.9} 
Let $1\leq p, r, q, q_0, q_1\leq \infty$, $r\leq q_0, q_1$, 
 $0<\theta, \eta<1$, 
\begin{align*} 
\frac{1}{q}&=\frac{1-\eta}{q_0}+\frac{\eta}{q_1}, 
\\ 
\frac{1}{p}&=\frac{1-\theta}{r}, \quad r<p. 
\end{align*} 
Then 
$$(L^{p,q_0}, L^{p,q_1})_{\eta, q}=(L^r, L^\infty)_{\theta, q}
=L^{p,q}= (L^{p,q_0}, L^{p,q_1})_{[\eta]}.  $$
\end{lemma}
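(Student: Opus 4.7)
The plan is to establish the three claimed identifications in sequence, relying on $K$-functional computations together with reiteration.

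First I would prove $(L^r, L^\infty)_{\theta, q} = L^{p, q}$, which is the classical real-interpolation description of Lorentz spaces. The key ingredient is the computation
\begin{equation*}
K(t, f; L^r, L^\infty) \asymp \Bigl(\int_0^{t^{r}} f^*(s)^r\, ds\Bigr)^{1/r}
\end{equation*}
(a Holmstedt-type formula with one endpoint $L^\infty$), after which a change of variables in the defining integral of the $(\theta, q)$-norm produces the Lorentz norm $\|f\|_{L^{p,q}}$ with the stated relation $1/p = (1-\theta)/r$.

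Next I would identify $(L^{p, q_0}, L^{p, q_1})_{\eta, q}$ with $(L^r, L^\infty)_{\theta, q} = L^{p,q}$. Since $L^{p, q_0}$ and $L^{p, q_1}$ are both obtained as $(L^r, L^\infty)_{\theta, q_i}$ with the \emph{same} primary index $\theta$, the usual reiteration theorem (which requires distinct primary indices) does not apply directly. I would therefore invoke the form of the reiteration theorem adapted to the equal-endpoint case in \cite[Ch.~3]{BL}, which yields the identification. An alternative route is to compute the $K$-functional of the pair $(L^{p,q_0}, L^{p,q_1})$ directly via a Holmstedt-type formula and recognize the resulting norm as that of $L^{p,q}$.

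Finally, for $(L^{p, q_0}, L^{p, q_1})_{[\eta]} = L^{p, q}$ I would invoke Calder\'on's product formula $[X_0, X_1]_{[\eta]} = X_0^{1-\eta} X_1^{\eta}$ for compatible K\"othe function spaces. The rearrangement inequality together with H\"older's inequality in the weighted space $L^q(dt/t)$ then reduces the matter to a pointwise identification of the norm, which is straightforward because the primary index $p$ is common to both endpoints. The main obstacle I anticipate is the degenerate reiteration step $\theta_0=\theta_1=\theta$: one must verify that the stability form of reiteration applies under the hypothesis $r \le q_0, q_1$ (which guarantees that the intermediate Lorentz spaces are admissible as interpolation spaces of the pair), with extra care when $q_0$ or $q_1$ equals $\infty$.
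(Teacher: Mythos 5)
The first two identifications in your plan track the paper's proof exactly: the paper establishes $(L^r,L^\infty)_{\theta,q}=L^{p,q}$ via the $K$-functional formula (its Lemma \ref{L1.10}/\ref{L1.11}, from \cite[Thm.\ 5.2.1]{BL}), and obtains $(L^{p,q_0},L^{p,q_1})_{\eta,q}=L^{p,q}$ by citing \cite[Thm.\ 5.3.1]{BL}, which already treats the equal primary index $p_0=p_1$ directly, so the degenerate-reiteration worry you flag is settled by that theorem rather than by a Holmstedt computation. Where you genuinely diverge is the complex-interpolation identity $(L^{p,q_0},L^{p,q_1})_{[\eta]}=L^{p,q}$. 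The paper derives it from the abstract statement $\bigl((L^r,L^\infty)_{\theta,q_0},(L^r,L^\infty)_{\theta,q_1}\bigr)_{[\eta]}=(L^r,L^\infty)_{\theta,q}$, an equal-$\theta$ analogue of \cite[Thm.\ 4.7.2]{BL}, and then proves that abstract statement from scratch in Section \ref{sec9}: one inclusion via a discrete $J$-functional representation and an explicit analytic family $f(z)=\exp(\delta(z-\eta)^2)\sum_\nu f_\nu(z)$, the other via the Poisson-integral representation of $f(\eta)$. You instead propose Calder\'on's product formula $[X_0,X_1]_{[\eta]}=X_0^{1-\eta}X_1^\eta$ for Banach lattices. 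That is a valid alternative and conceptually shorter if one is willing to quote Calder\'on, but two points need more than the word ``straightforward.'' First, Calder\'on's theorem requires at least one endpoint to have order-continuous norm, which $L^{p,\infty}$ lacks; the case $q_0=q_1=\infty$ must be set aside (it is the trivial couple $(A,A)$), and the mixed case $q_0<\infty=q_1$ must be explicitly checked against the hypotheses. Second, the lattice identity $(L^{p,q_0})^{1-\eta}(L^{p,q_1})^\eta=L^{p,q}$ is not a bare consequence of H\"older in $L^q(dt/t)$: the inclusion $L^{p,q}\subset(L^{p,q_0})^{1-\eta}(L^{p,q_1})^\eta$ requires a genuine factorization $|f|\le g_0^{1-\eta}g_1^\eta$ with $g_i\in L^{p,q_i}$, and the naive choice $g_0=g_1=|f|$ fails when $q_0<q$. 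So your route is sound in outline but leaves the factorization lemma and the lattice-theoretic hypotheses to be filled in, whereas the paper's self-contained $J$-method/Poisson-integral argument avoids both.
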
 
In the conclusion of the lemma, the equality of spaces 
means that the spaces are equal with 
equivalent norms; we also have this rule for description in what follows. 
\par 
Here we recall $\bar{A}_{[\theta]}$, where $\bar{A}=(A_0,A_1)$ denotes a 
compatible pair of normed vector spaces (see 2.3 of \cite{BL}).  
Let $\mathscr F(\bar{A})$ be the space of all continuous functions $f$ from 
$S=\{z\in \Bbb C: 0\leq \re z\leq 1\}$ to $\Sigma(\bar{A})=A_0+A_1$ which are 
analytic in $S_0=\{z\in \Bbb C: 0< \re z< 1\}$ and functions $f_j(t)=f(j+it)$, 
$t\in \Bbb R$, are  $A_j$-valued continuous functions on $\Bbb R$ 
with respect to $A_j$-norm such that     
$\lim_{|t|\to \infty} \|f_j(t)\|_{A_j}=0$, $j=0,1$. 
\par 
Let $0<\theta<1$ and 
$$\bar{A}_{[\theta]}=\{a\in \Sigma(\bar{A}): a=f(\theta) \quad 
\text{for some $f\in \mathscr F(\bar{A})$}\}. $$ 
We define 
$$ \|a\|_{\bar{A}_{[\theta]}}=\inf\{\|f\|_{\mathscr F(\bar{A})}: 
a=f(\theta), \quad f\in \mathscr F(\bar{A})\},  $$  
where 
$$\|f\|_{\mathscr F(\bar{A})}=\max\left(\sup_{t\in \Bbb R}\|f(it)\|_{A_0}, 
\sup_{t\in \Bbb R}\|f(1+it)\|_{A_1}\right). 
$$ 
We also write  $\|a\|_{[\theta]}$ for $\|a\|_{\bar{A}_{[\theta]}}$
 when $\bar{A}$ is fixed. (See  \cite[Chap. 4]{BL}.) 
\begin{lemma}\label{L1.9+}
Let $0<\eta<1$.   
Let $1\leq  a_i, b_i\leq \infty$, $i=0,1$ and $1<s, u \leq \infty$.  
Define $v, w $ by 
$$\frac{1}{v}=\frac{1-\eta}{a_0}+\frac{\eta}{a_1},  \quad 
\frac{1}{w}=\frac{1-\eta}{b_0}+\frac{\eta}{b_1}. $$
  Let  $T: L^{s,a_i}\times L^{u,b_i} \to \Bbb C$ be as in Lemma $\ref{L1.7}$. 
Suppose that 
$$|T(f_1,f_2)|\leq 
M_i\|f_1\|_{L^{s, a_i}}\|f_2\|_{L^{u,b_i}}, \quad i=0, 1. $$ 
Then 
$$|T(f_1,f_2)|\leq C M_0^{1-\eta}M_1^\eta
\|f_1\|_{L^{s,v}}\|f_2\|_{L^{u, w}}. $$ 
\end{lemma}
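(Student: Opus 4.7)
The plan is to deduce Lemma \ref{L1.9+} from bilinear complex interpolation, using Lemma \ref{L1.9} to identify the complex interpolation spaces between Lorentz spaces with common first exponent. Concretely, Lemma \ref{L1.9} (applied with $p=s$, $q_i=a_i$, and separately with $p=u$, $q_i=b_i$) gives
\[
(L^{s,a_0}, L^{s,a_1})_{[\eta]} = L^{s,v}, \qquad
(L^{u,b_0}, L^{u,b_1})_{[\eta]} = L^{u,w},
\]
with equivalent norms. So it suffices to prove the estimate with $\|f_1\|_{L^{s,v}}$ and $\|f_2\|_{L^{u,w}}$ replaced by $\|f_1\|_{[\eta]}$ (with respect to $(L^{s,a_0},L^{s,a_1})$) and $\|f_2\|_{[\eta]}$ (with respect to $(L^{u,b_0},L^{u,b_1})$).

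To do this, I would fix $f_1, f_2$ continuous and compactly supported (so that all norms on $T(f_1,f_2) = A_{k,n}(g_0,\dots,g_n)$ are finite), and, for any $\varepsilon>0$, choose $F_1 \in \mathscr F(L^{s,a_0},L^{s,a_1})$ and $F_2 \in \mathscr F(L^{u,b_0},L^{u,b_1})$ with $F_1(\eta)=f_1$, $F_2(\eta)=f_2$, and
\[
\|F_1\|_{\mathscr F} \le (1+\varepsilon)\|f_1\|_{[\eta]},
\qquad
\|F_2\|_{\mathscr F} \le (1+\varepsilon)\|f_2\|_{[\eta]}.
\]
Define
\[
\Phi(z) = M_0^{z-\eta}\, M_1^{-(z-\eta)}\, T\bigl(F_1(z), F_2(z)\bigr).
\]
One verifies $\Phi$ is bounded and continuous on the strip $\bar S$, analytic in $S_0$, with boundary bounds
\[
|\Phi(it)| \le \|F_1(it)\|_{L^{s,a_0}}\|F_2(it)\|_{L^{u,b_0}},
\quad
|\Phi(1+it)| \le \|F_1(1{+}it)\|_{L^{s,a_1}}\|F_2(1{+}it)\|_{L^{u,b_1}},
\]
by the hypothesized endpoint estimates (after absorbing the $M_i$ into the prefactor). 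The three lines theorem then yields
\[
|T(f_1,f_2)| = M_0^{1-\eta} M_1^{\eta}\, |\Phi(\eta)|
\le M_0^{1-\eta} M_1^{\eta}\, \|F_1\|_{\mathscr F}\,\|F_2\|_{\mathscr F},
\]
and letting $\varepsilon\to 0$ and applying Lemma \ref{L1.9} gives the conclusion.

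The main obstacle will be verifying analyticity and continuity of $z\mapsto T(F_1(z),F_2(z))$ on the strip, since $T$ is only bilinear on the relevant Lorentz spaces and the families $F_j(z)$ take values in different spaces on the two edges. A standard workaround is to first reduce to simple $F_j$, namely finite sums $F_j(z)=\sum_k e^{\lambda_k z} c_{j,k}\, h_{j,k}$ with $h_{j,k}$ continuous and compactly supported, for which $T(F_1(z),F_2(z))$ is a finite sum of products of exponentials and hence manifestly holomorphic in $z$. Such simple analytic families are dense in $\mathscr F$ (this density is part of the proof of the Stein--Weiss interpolation theorem in \cite[Chap.~4]{BL}), so the argument above extends to general $F_1,F_2$ by approximation, together with the boundary continuity built into the definition of $\mathscr F$. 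Once this technical point is handled, the rest of the proof is the three lines theorem followed by the identification of interpolation spaces.
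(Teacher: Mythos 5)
Your proof is correct and follows essentially the same route as the paper: apply bilinear complex interpolation to get the bound with the $(\cdot,\cdot)_{[\eta]}$ norms, then invoke Lemma \ref{L1.9} to identify those with the Lorentz norms. The only difference is that the paper simply cites Theorem 4.4.1 of \cite{BL} for the bilinear interpolation step, whereas you spell out the three-lines argument (including the density reduction to simple analytic families) that underlies that theorem.
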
 
\begin{proof} 
By Theorem 4.4.1 of \cite{BL}, we have 
$$|T(f_1,f_2)|\leq M_0^{1-\eta}M_1^\eta
\|f_1\|_{\left(L^{s,a_0}, L^{s,a_1}\right)_{[\eta]}}
\|f_2\|_{\left(L^{u,b_0}, L^{u,b_1}\right)_{[\eta]}}. $$ 
From this and Lemma \ref{L1.9}, the conclusion follows. 
\end{proof} 

\begin{remark}\label{re5.5} 
We have analogues of Lemmas \ref{L1.7} and \ref{L1.9+} for the Lorentz 
spaces over $S^{n-1}$, where the operator $T$ is replaced with the one defined 
by using $B_{k,n}$ in Section \ref{sec4}.  
\end{remark} 

 We shall give proofs of Lemmas \ref{L1.7} and \ref{L1.9} in Sections 6 and 8. 
\begin{proof}[Proof of Lemma $\ref{L1.6}$]   
We may assume 
$$\frac{k}{n}<\frac{1}{p_0}\leq \frac{1}{p_1}\leq \dots \leq \frac{1}{p_n}<1, 
\qquad \sum_{j=0}^n \frac{1}{p_j}=k+1.  $$
Define $\theta_0\in (0,1)$ and $u_1$ by    
$$\frac{1}{p_0}=(1-\theta_0)\frac{k}{n}  +\theta_0, \qquad 
\frac{1}{u_1}=(1-\theta_0)\ +\theta_0\frac{k}{n}.  $$   
Then
$$\frac{1}{p_0}+\frac{1}{u_1}=1+ \frac{k}{n}    $$  
and 
$$\frac{1}{p_n}<\frac{1}{u_1}, $$  
since if $\frac{1}{p_n}\geq \frac{1}{u_1}$, then 
$$\frac{1}{p_0}+\frac{1}{p_n}\geq \frac{1}{p_0}+\frac{1}{u_1}=
\frac{n+k}{n}, $$  
and so 
$$\frac{1}{p_0}+\frac{1}{p_n} +\frac{1}{p_1}+\dots +\frac{1}{p_{n-1}}>
\frac{n+k}{n} +\frac{k}{n}(n-1)=k+1, $$  
which contradicts our assumption. 
Next, define $\theta_1\in (0,1)$ and $u_2$ by 
$$\frac{1}{p_1}=(1-\theta_1)\frac{k}{n}  +\theta_1\frac{1}{u_1}, \qquad   
\frac{1}{u_2}=(1-\theta_1)\frac{1}{u_1}\ +\theta_1\frac{k}{n}.  $$   
Then
$$\frac{1}{p_1}+\frac{1}{u_2}=\frac{k}{n}+ \frac{1}{u_1}   $$ 
and if $n\geq 3$, 
$$\frac{1}{p_n}<\frac{1}{u_2}, $$  
since if we would have $\frac{1}{p_n}\geq \frac{1}{u_2}$, then using 
$$\frac{1}{p_0}+\frac{1}{p_1}+\frac{1}{u_2}= \frac{1}{p_0}+\frac{1}{u_1}
+ \frac{k}{n}=\frac{n+2k}{n}, $$  
we would  have 
$$\frac{1}{p_0}+\frac{1}{p_1}+\frac{1}{p_n}\geq \frac{1}{p_0}+\frac{1}{p_1}+ 
\frac{1}{u_2}=\frac{n+2k}{n}, $$  
and hence 
$$\frac{1}{p_0}+\frac{1}{p_1}+\frac{1}{p_n} +\frac{1}{p_2}+\dots +
\frac{1}{p_{n-1}}>
\frac{n+2k}{n} +\frac{k}{n}(n-2)=k+1, $$  
which violates our assumption. Also, if $n=2$, $1/p_2= 1/u_2$. 
\par 
After defining $\theta_{j-1}$ and $u_j$, 
similarly, we can define $\theta_j\in (0,1)$ and  $u_{j+1}$ by 
$$\frac{1}{p_j}=(1-\theta_j)\frac{k}{n}  +\theta_j\frac{1}{u_j}, \qquad   
\frac{1}{u_{j+1}}=(1-\theta_j)\frac{1}{u_j}\ +\theta_j\frac{k}{n}  $$ 
for $j=1,2, \dots, n-1$.    
Then
$$\frac{1}{p_j}+\frac{1}{u_{j+1}}=\frac{k}{n}+\frac{1}{u_j}. 
$$ 
We have 
 $$\frac{1}{p_n}<\frac{1}{u_{j+1}}, \qquad 0\leq j \leq n-2. $$  
To see this we observe 
\begin{align*}
\frac{1}{p_0}+\frac{1}{p_1}+\dots +\frac{1}{p_j} +\frac{1}{u_{j+1}}
&= \frac{1}{p_0}+\dots +\frac{1}{p_{j-1}} +\frac{1}{u_{j}}
+\frac{k}{n}   
\\ 
&=\frac{1}{p_0}+\dots +\frac{1}{p_{j-2}} +\frac{1}{u_{j-1}}
+\frac{2k}{n} 
\\ 
&= \dots 
\\ 
&= \frac{1}{p_0}+\frac{1}{u_1}+\frac{jk}{n}.     
\end{align*} 
Thus if $\frac{1}{p_n}\geq \frac{1}{u_{j+1}}$, then 
\begin{align*}
\frac{1}{p_0}+\frac{1}{p_1}+\dots +\frac{1}{p_j} +\frac{1}{p_n}
&\geq 
\frac{1}{p_0}+\frac{1}{p_1}+\dots +\frac{1}{p_j} +\frac{1}{u_{j+1}}
\\ 
&= \frac{1}{p_0}+\frac{1}{u_1}+\frac{jk}{n} 
\\ 
&=\frac{n+(j+1)k}{n}
\end{align*} 
and hence 
\begin{align*}
&\frac{1}{p_0}+\frac{1}{p_1}+\dots +\frac{1}{p_j} +\frac{1}{p_n}
+ \frac{1}{p_{j+1}}+\dots  +\frac{1}{p_{n-1}}
\\ 
&>\frac{n+(j+1)k}{n}+\frac{k}{n}(n-j-1)
\\
&= k+1, 
\end{align*} 
which contradicts our assumption.  
Also, we have 
$$ \frac{1}{u_n}=\frac{1}{p_n}$$  
since 
$$\frac{1}{p_0}+\frac{1}{p_1}+\dots +\frac{1}{p_{n-1}}+
\frac{1}{u_n}=\frac{n+nk}{n}=k+1.   $$  
\par 
We write $A=|A_{k,n}(f_0, f_1, \dots, f_n)|$. 
By \eqref{e10} and symmetry, we have 
$$A
\leq C \|f_0\|_{n/k,1}\|f_1\|_1\|f_2\|_{n/k,1}\|f_3\|_{n/k,1}
\dots \|f_n\|_{n/k,1}, $$
$$A\leq C \|f_0\|_{1}\|f_1\|_{n/k,1}
\|f_2\|_{n/k,1}\|f_3\|_{n/k,1}\dots \|f_n\|_{n/k,1}. $$
Interpolating by using Lemma \ref{L1.7} and applying symmetry, 
$$A\leq C \|f_0\|_{p_0,\infty}\|f_1\|_{u_1,1}
\|f_2\|_{n/k,1}\|f_3\|_{n/k,1}\dots \|f_n\|_{n/k,1}, $$
$$A\leq C \|f_0\|_{p_0,\infty}\|f_1\|_{n/k,1}
\|f_2\|_{u_1,1}\|f_3\|_{n/k,1}\dots \|f_n\|_{n/k,1}. $$  
Using Lemma \ref{L1.7} and symmetry, 
$$A\leq C \|f_0\|_{p_0,\infty}\|f_1\|_{p_1,\infty}
\|f_2\|_{u_2,1}\|f_3\|_{n/k,1}\dots \|f_n\|_{n/k,1}, $$
$$A\leq C \|f_0\|_{p_0,\infty}\|f_1\|_{p_1,\infty}
\|f_2\|_{n/k,1}\|f_3\|_{u_2,1}\dots \|f_n\|_{n/k,1}. $$  
Continuing this procedure, 
$$A\leq C \|f_0\|_{p_0,\infty}\|f_1\|_{p_1,\infty}
\dots \|f_{n-2}\|_{p_{n-2},\infty}\|f_{n-1}\|_{u_{n-1},1}\|f_n\|_{n/k,1}, $$
$$A\leq C \|f_0\|_{p_0,\infty}\|f_1\|_{p_1,\infty}
\dots \|f_{n-2}\|_{p_{n-2},\infty}\|f_{n-1}\|_{n/k,1}\|f_n\|_{u_{n-1},1}.
$$  
Interpolating between these estimates, we have 
$$A \leq C \|f_0\|_{p_0,\infty}\|f_1\|_{p_1,\infty}
\dots \|f_{n-2}\|_{p_{n-2},\infty}\|f_{n-1}\|_{p_{n-1},\infty}
\|f_n\|_{u_{n},1}. 
$$  
This and symmetry complete the proof, since $u_n=p_n$.

\end{proof}  
\begin{proof}[Proof of Proposition $\ref{prop1.3}$]  
We also write $A=|A_{k,n}(f_0, f_1, \dots, f_n)|$. 
By Lemma \ref{L1.6}, 
$$A\leq C \|f_0\|_{p_0,1}\|f_1\|_{p_1,\infty}
\|f_1\|_{p_2,\infty}
\dots \|f_{n-2}\|_{p_{n-2},\infty}\|f_{n-1}\|_{p_{n-1},\infty}
\|f_n\|_{p_{n},\infty}, 
$$  
$$A\leq C \|f_0\|_{p_0,\infty}\|f_1\|_{p_1,1}
\|f_1\|_{p_2,\infty}
\dots \|f_{n-2}\|_{p_{n-2},\infty}\|f_{n-1}\|_{p_{n-1},\infty}
\|f_n\|_{p_{n},\infty}. 
$$  
Since 
$$\frac{1}{n+1}=\frac{1}{1}\frac{1}{n+1}  +\frac{1}{\infty}\frac{n}{n+1}, 
\quad 
\frac{n}{n+1}=\frac{1}{\infty}\frac{1}{n+1}+ \frac{1}{1}\frac{n}{n+1}, 
\quad 
\frac{1}{n+1}+ \frac{n}{n+1}=1, 
$$ 
by Lemma \ref{L1.7} in the case $s_0=s_1$ and $u_0=u_1$ or 
Lemma \ref{L1.9+} and symmetry, we have   
$$A\leq C \|f_0\|_{p_0,n+1}\|f_1\|_{p_1,\frac{n+1}{n}}
\|f_2\|_{p_2,\infty}\|f_3\|_{p_3,\infty}
\dots \|f_{n-2}\|_{p_{n-2},\infty}\|f_{n-1}\|_{p_{n-1},\infty}
\|f_n\|_{p_{n},\infty}, 
$$  
$$A\leq C \|f_0\|_{p_0,n+1}\|f_1\|_{p_1,\infty}
\|f_2\|_{p_2,\frac{n+1}{n}}\|f_3\|_{p_3,\infty}
\dots \|f_{n-2}\|_{p_{n-2},\infty}\|f_{n-1}\|_{p_{n-1},\infty}
\|f_n\|_{p_{n},\infty}. 
$$  
Since 
$$\frac{1}{n+1}=\frac{n}{n+1}\frac{1}{n}  +\frac{1}{\infty}\frac{n-1}{n}, 
\quad 
\frac{n-1}{n+1}=\frac{1}{\infty}\frac{1}{n}+ \frac{n}{n+1}\frac{n-1}{n}, 
$$ 
by Lemma \ref{L1.9+} and symmetry, we have 
$$A\leq C \|f_0\|_{p_0,n+1}\|f_1\|_{p_1,n+1}
\|f_2\|_{p_2,\frac{n+1}{n-1}}\|f_3\|_{p_3,\infty}
\dots \|f_{n-1}\|_{p_{n-1},\infty}
\|f_n\|_{p_{n},\infty}, 
$$  
$$A\leq C \|f_0\|_{p_0,n+1}\|f_1\|_{p_1,n+1}
\|f_2\|_{p_2,\infty}\|f_3\|_{p_3,\frac{n+1}{n-1}}
\dots \|f_{n-1}\|_{p_{n-1},\infty}
\|f_n\|_{p_{n},\infty}. 
$$  
(Note that $1/(n+1)+ (n-1)/(n+1)=n/(n+1)\neq 1$.) 
In general, since 
\begin{align*}   
\frac{1}{n+1}&=\frac{n-j+1}{n+1}\frac{1}{n-j+1} 
 +\frac{1}{\infty}\frac{n-j}{n-j+1},   
\\ 
\frac{n-j}{n+1}&=\frac{1}{\infty}\frac{1}{n-j+1}+ 
\frac{n-j+1}{n+1}\frac{n-j}{n-j+1}, 
\\ 
&\frac{1}{n-j+1}+\frac{n-j}{n-j+1}=1, 
\end{align*}   
interpolating by using Lemma \ref{L1.9+} between the estimates: 
\begin{multline*}
A\leq C \|f_0\|_{p_0,n+1}\dots \|f_{j-1}\|_{p_{j-1},n+1} 
\\ 
\times \|f_j\|_{p_j,\frac{n+1}{n-j+1}}\|f_{j+1}\|_{p_{j+1},\infty}
\|f_{j+2}\|_{p_{j+2},\infty}\dots \|f_n\|_{p_{n},\infty},  
\end{multline*} 
\begin{multline*}
A\leq C \|f_0\|_{p_0,n+1}\dots \|f_{j-1}\|_{p_{j-1},n+1} 
\\ 
\times \|f_j\|_{p_j,\infty}\|f_{j+1}\|_{p_{j+1},\frac{n+1}{n-j+1}}
\|f_{j+2}\|_{p_{j+2},\infty}
\dots 
\|f_n\|_{p_{n},\infty}    
\end{multline*} 
for $j\leq n-1$, we have 
\begin{multline*}
A\leq C \|f_0\|_{p_0,n+1}\dots \|f_{j-1}\|_{p_{j-1},n+1}
\\                                  
\times \|f_j\|_{p_j,n+1}\|f_{j+1}\|_{p_{j+1},\frac{n+1}{n-j}}
\|f_{j+2}\|_{p_{j+2},\infty}\dots \|f_n\|_{p_n,\infty}
\end{multline*} 
for $j\leq n-2$, and for $j=n-1$ this becomes 
$$A\leq C\|f_0\|_{p_0,n+1}\dots \|f_n\|_{p_n,n+1}, $$
which completes the proof of Proposition $\ref{prop1.3}$ by induction. 
\end{proof} 
\par 
\par 
Next, we prove Proposition \ref{p4.2} from \eqref{e2.15} by  interpolation 
arguments as above.   
Recall \eqref{e2.15}: 
\begin{equation*}
\left|B_{k,n}( f_1, \dots, f_n)\right| \leq C\|f_1\|_1 
\prod_{j=2}^n \|f_j\|_{(n-1)/(k-1),1}.   
\end{equation*}  
Let 
$$\frac{k-1}{n-1}<\frac{1}{p_j}<1, \quad 1\leq j\leq n, \qquad 
\sum_{j=1}^n\frac{1}{p_j}=k.  
$$ 
Then using Remark \ref{re5.5} with \eqref{e2.15} and arguing as in the proof of Lemma \ref{L1.6} from  \eqref{e10}, by taking $n-1$ and $k-1$ for $n$ and $k$, 
respectively, we have 
\begin{equation}\label{e3.1}
\left|B_{k,n}( f_1, \dots, f_n)\right| \leq C\|f_1\|_{p_0,\infty} \dots 
\|f_{n-1}\|_{p_{n-1},\infty} \|f_n\|_{p_n,1}. 
\end{equation}  
Similarly, as Lemma \ref{L1.6} proves Proposition \ref{prop1.3}, 
\eqref{e3.1} and Remark \ref{re5.5} imply  
\begin{equation*}
\left|B_{k,n}( f_1, \dots, f_n)\right| \leq C\prod_{j=1}^n \|f_j\|_{p_j,n}.  
\end{equation*}  
This completes the proof of Proposition \ref{p4.2}.

\section{Proof of Lemma $\ref{L1.7}$}\label{sec6}

Let $\bar{A}=(A_0, A_1)$, $\bar{B}=(B_0, B_1)$ and let 
$S(\bar{A}, v,\theta)=S(\bar{A}, (v,v),\theta)$,  
$S(\bar{B}, w,\theta)=S(\bar{B}, (w,w),\theta)$. 
Here  
$S(\bar{A}, (r_0,r_1),\theta)$ is the subspace of $\Sigma(\bar{A})$ 
consisting of all $a\in \Sigma(\bar{A})$ 
such that 
\begin{equation}\label{e1.7} 
a=\int_0^\infty u(t)\, \frac{dt}{t},  
\end{equation}  
with $u(t)\in \Delta(\bar{A})=A_0\cap A_1$ for all $t>0$ (see \cite[2.3]{BL}), 
 and the integral is taken in $\Sigma(\bar{A})$; further it is  assumed that 
$$\max\left(\left\|t^{-\theta}u(t) \right\|_{L^{r_0}(A_0, dt/t)}, 
\left\|t^{1-\theta}u(t) \right\|_{L^{r_1}(A_1, dt/t)} \right)<\infty,   $$   
where 
$$ \left\|U(t) \right\|_{L^{r_j}(A_j, dt/t)}= 
\left(\int_0^\infty\|U(t)\|_{A_j}^{r_j}\, \frac{dt}{t}\right)^{1/r_j}, $$
with the usual modification when $r_j=\infty$.  (See \cite[3.12]{BL}).)  
The norm is defined as 
\begin{multline*}
\left\|a\right\|_{S(\bar{A}, (r_0,r_1),\theta)}
\\
=\inf\left\{ \max\left(\left\|t^{-\theta}u(t) \right\|_{L^{r_0}(A_0, dt/t)}, 
\left\|t^{1-\theta}u(t) \right\|_{L^{r_1}(A_1, dt/t)} \right): 
 \text{$u$ is as in \eqref{e1.7}}\right\}.  
\end{multline*}
\par 
We assume that $u$ has the form 
\begin{equation}\label{ee6.2}
u(t)(x)=\sum_{j=1}^M F_j(x)m_j(t),  
\end{equation}  
where $F_j \in A_0\cap A_1$  
and $m_j$ is a bounded measurable 
function on $(0, \infty)$ supported on a compact subinterval of $(0, \infty)$. 
\par 
We assume that $a\in \Delta(\bar{A})$ is expressed as in \eqref{e1.7} with $u$ 
as in \eqref{ee6.2}: 
$$ a=\int_0^\infty u(t)\, \frac{dt}{t}=\sum_{j=1}^M c_jF_j, \quad   
c_j=\int_0^\infty m_j(t)\, \frac{dt}{t}. $$  
We note that $\|u(t)\|_{A_j}$ is measurable in $t$ for $j=0, 1$  
and define 
\begin{align*}
\left\|a\right\|_{S^*(\bar{A}, q,\theta)}
&=\inf\Big\{\max\left(\left\|t^{-\theta}u(t) \right\|_{L^{q}(A_0, dt/t)}, 
\left\|t^{1-\theta}u(t) \right\|_{L^{q}(A_1, dt/t)} \right): 
\\ 
&\qquad \qquad\qquad \qquad \quad \text{$a$ and $u$ are as in \eqref{e1.7}
 and $u$ is as in \eqref{ee6.2}}\Big\}.  
\end{align*}

\par 
Let $f_1$, $f_2$ be continuous functions on $\Bbb R^n$ with compact 
support. Let $u_1$, $u_2$ be as in \eqref{ee6.2} such that 
$$f_i=\int_0^\infty u_i(t)\, \frac{dt}{t}, \quad i=1, 2. $$ 
We choose an infinitely differentiable non-negative function $\varphi$ on 
$\Bbb R^n$ with compact support and with integral $1$. Put $\varphi_\epsilon(x)  =\epsilon^{-n}\varphi(\epsilon^{-1}x)$ with $\epsilon>0$.  
Let $f_i^{(\epsilon)}=f_i*\varphi_\epsilon$ 
and 
$$u_i^{(\epsilon)}(t)(x)=\sum_{j=1}^M \widetilde{F}_j^{(i)}*\varphi_\epsilon(x)
m_j(t),    
$$
where 
$\widetilde{F}_j^{(i)}=F_j^{(i)}\chi_K$ with $\chi_K$ denoting the 
characteristic function of a compact set $K$ in $\Bbb R^n$, 
if 
$$u_i(t)(x)=\sum_{j=1}^M F_j^{(i)}(x)m_j(t).     
$$ 
Then if $K$ is sufficiently large according to the supports of $f_1$, $f_2$, 
we see that 
$$f_i^{(\epsilon)}=\int_0^\infty u_i^{(\epsilon)}(t)\, \frac{dt}{t}, 
\quad i=1, 2. $$ 
Further, we see that $\widetilde{F}_j^{(i)}*\varphi_\epsilon$ is 
compactly supported and continuous. 
So we can define    
$$w_\epsilon(t)=\int_0^\infty T\left(u_1^{(\epsilon)}
\left(\frac{t}{t_1}\right), 
u_2^{(\epsilon)}(t_1)\right)\, \frac{dt_1}{t_1}, $$ 
which satisfies  
\begin{equation}\label{e1.8}
T(f_1^{(\epsilon)},f_2^{(\epsilon)})= \int_0^\infty w_\epsilon(t)\, 
\frac{dt}{t}.  
\end{equation}  
\par   
 For $\theta\in (0,1)$, we have 
 \begin{equation}\label{e1.9}
 \left|\int_0^\infty w_\epsilon(t)\, \frac{dt}{t}\right|\leq 
 C_\theta\|t^{-\theta}w_\epsilon(t)\|_\infty^{1-\theta}
 \|t^{1-\theta}w_\epsilon(t)\|_\infty^{\theta}.   
 \end{equation} 
To see this we evaluate the integral on the left hand side as follows: 
\begin{align*}  
\left|\int_0^\infty w_\epsilon(t)\, \frac{dt}{t}\right|& \leq  
\left|\int_0^A w_\epsilon(t)\, \frac{dt}{t}\right| +
\left|\int_A^\infty w_\epsilon(t)\, \frac{dt}{t}\right| 
\\ 
& \leq  
  \|t^{-\theta}w_\epsilon(t)\|_\infty\left|\int_0^A t^{\theta-1}\, dt\right| + 
\|t^{1-\theta}w_\epsilon(t)\|_\infty\left|\int_A^\infty t^{-2+\theta}
\, dt\right| 
\\ 
&= \frac{1}{\theta}A^\theta \|t^{-\theta}w_\epsilon(t)\|_\infty 
+ \frac{1}{1-\theta}A^{\theta-1}\|t^{1-\theta}w_\epsilon(t)\|_\infty 
\\ 
&= 2\theta^{\theta-1}(1-\theta)^{-\theta}
\|t^{-\theta}w_\epsilon(t)\|_\infty^{1-\theta} 
\|t^{1-\theta}w_\epsilon(t)\|_\infty^\theta, 
\end{align*} 
where $A=\theta(1-\theta)^{-1}\|t^{-\theta}w_\epsilon(t)\|_\infty^{-1} 
\|t^{1-\theta}w_\epsilon(t)\|_\infty$.  Now, by H\"{o}lder's inequality,  
\begin{align*}
|t^{-\theta}w_\epsilon(t)|
&\leq M_0\int_0^\infty t^{-\theta}\|u_1^{(\epsilon)}(tt_1^{-1})\|_{A_0} 
\|u_2^{(\epsilon)}(t_1)\|_{B_0} \, \frac{dt_1}{t_1} 
\\ 
&\leq M_0\|t^{-\theta}u_1^{(\epsilon)}(t)\|_{L^{v}(A_0,dt/t)}
\|t^{-\theta}u_2^{(\epsilon)}(t)\|_{L^{w}(B_0,dt/t)}. 
\end{align*} 
Similarly, 
$$|t^{1-\theta}w_\epsilon(t)|\leq 
M_1\|t^{1-\theta}u_1^{(\epsilon)}(t)\|_{L^{v}(A_1,dt/t)}
\|t^{1-\theta}u_2^{(\epsilon)}(t)\|_{L^{w}(B_1,dt/t)}. 
$$  
Thus by \eqref{e1.8} and \eqref{e1.9} we have 
\begin{align*}   
|T(f_1^{(\epsilon)},f_2^{(\epsilon)})|&\leq C_\theta M_0^{1-\theta}M_1^\theta
\|t^{-\theta}u_1^{(\epsilon)}(t)\|_{L^{v}(A_0,dt/t)}^{1-\theta}
\|t^{1-\theta}u_1^{(\epsilon)}(t)\|_{L^{v}(A_1,dt/t)}^\theta  
\\ 
&\quad \times 
\|t^{-\theta}u_2^{(\epsilon)}(t)\|_{L^{w}(B_0,dt/t)}^{1-\theta} 
\|t^{1-\theta}u_2^{(\epsilon)}(t)\|_{L^{w}(B_1,dt/t)}^\theta 
\\ 
&\leq C_\theta M_0^{1-\theta}M_1^\theta
\|t^{-\theta}u_1(t)\|_{L^{v}(A_0,dt/t)}^{1-\theta}
\|t^{1-\theta}u_1(t)\|_{L^{v}(A_1,dt/t)}^\theta  
\\ 
&\quad \times 
\|t^{-\theta}u_2(t)\|_{L^{w}(B_0,dt/t)}^{1-\theta} 
\|t^{1-\theta}u_2(t)\|_{L^{w}(B_1,dt/t)}^\theta, 
\end{align*} 
where the last inequality follows since 
$$\|u_1^{(\epsilon)}(t)\|_{A_i} \leq C\|u_1(t)\|_{A_i}, 
\|u_2^{(\epsilon)}(t)\|_{B_i} \leq C\|u_2(t)\|_{B_i}, \quad i=1, 2,  $$ 
with a constant $C$ independent of $\epsilon>0$.  
Letting $\epsilon \to 0$, we have  
\begin{align*}   
|T(f_1,f_2)| 
&\leq C_\theta M_0^{1-\theta}M_1^\theta
\|t^{-\theta}u_1(t)\|_{L^{v}(A_0,dt/t)}^{1-\theta}
\|t^{1-\theta}u_1(t)\|_{L^{v}(A_1,dt/t)}^\theta  
\\ 
&\quad \times 
\|t^{-\theta}u_2(t)\|_{L^{w}(B_0,dt/t)}^{1-\theta} 
\|t^{1-\theta}u_2(t)\|_{L^{w}(B_1,dt/t)}^\theta, 
\end{align*} 
and hence, 
taking  $u_1, u_2$  suitably, we see that  
$$|T(f_1,f_2)|\leq C_\theta M_0^{1-\theta}M_1^\theta 
\|f_1\|_{S^*(\bar{A}, v,\theta)}\|f_2\|_{S^*(\bar{B}, w,\theta)}. 
$$  
This and Lemma \ref{L1.8} below imply 
$$|T(f_1,f_2)|\leq C_\theta M_0^{1-\theta}M_1^\theta 
\|f_1\|_{\bar{A}_{\theta,v}}\|f_2\|_{\bar{B}_{\theta,w}}. 
$$  
The relation $\bar{A}_{\theta,v}=L^{s,v}$, $\bar{B}_{\theta,w}=
L^{u,w}$ claimed in the 
lemma follows from Theorem 5.3.1 of \cite{BL}.

\begin{lemma}\label{L1.8}   Let $1\leq q \leq\infty$ and $0<\theta<1$.  Then 
$\|a\|_{S^*(\bar{A}, q,\theta)} \sim \|a\|_{\bar{A}_{\theta,q}}$ for 
$a\in \Delta(\bar{A})$.    
\end{lemma}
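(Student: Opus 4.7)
The plan is to recognize $S(\bar{A}, q, \theta)$ as a variant of the classical $J$-method interpolation space $\bar{A}_{\theta, q; J}$ and reduce the lemma to the standard equivalence theorem. Setting $J(t,x) := \max(\|x\|_{A_0}, t\|x\|_{A_1})$ for $x \in \Delta(\bar{A})$, the elementary inequality $\max(\alpha,\beta) \le (\alpha^q+\beta^q)^{1/q} \le 2^{1/q}\max(\alpha,\beta)$ applied to $\alpha = \|t^{-\theta} u\|_{L^q(A_0, dt/t)}$ and $\beta = \|t^{1-\theta} u\|_{L^q(A_1, dt/t)}$ identifies $\|a\|_{S(\bar{A}, q, \theta)}$ with the $J$-norm
\[
\|a\|_{\bar{A}_{\theta, q; J}} = \inf_u \left(\int_0^\infty (t^{-\theta} J(t, u(t)))^q\, \frac{dt}{t}\right)^{1/q}
\]
up to constants depending only on $q$. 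By the equivalence theorem for real interpolation (Theorem 3.12.2 of \cite{BL}), $\bar{A}_{\theta, q; J} = \bar{A}_{\theta, q}$ with equivalent norms, so the lemma reduces to showing $\|a\|_{S^*(\bar{A}, q, \theta)} \sim \|a\|_{S(\bar{A}, q, \theta)}$ for $a \in \Delta(\bar{A})$.

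The bound $\|a\|_S \le \|a\|_{S^*}$ is immediate: every $u$ of the simple form \eqref{ee6.2}, with $F_j \in \Delta(\bar{A}) \subset A_0 \cap A_1$ and $m_j$ bounded with support in a compact subinterval of $(0, \infty)$, is automatically admissible for the $S$-norm, so the infimum defining $\|a\|_S$ is at most that defining $\|a\|_{S^*}$.

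For the reverse inequality, I would fix $a \in \Delta(\bar{A})$ and $\eta > 0$ and choose an $S$-admissible representation $u$ of $a$ with weighted norm at most $(1+\eta)\|a\|_S$. I would then approximate $u$ in the weighted $L^q$-sense by a finite step function $v(t) = \sum_{j=1}^M F_j \chi_{I_j}(t)$, where $\{I_j\}_{j=1}^M$ partitions a large compact subinterval $[\alpha, \beta] \subset (0, \infty)$ and each $F_j \in \Delta(\bar{A})$ is an average of $u$ on $I_j$. Such an approximation is available by density of simple functions in the weighted $L^q$ spaces; the tail is handled by H\"older's inequality, which gives, for $\theta \in (0,1)$,
\[
\int_0^\alpha \|u(t)\|_{A_0}\, \frac{dt}{t} \le \Bigl(\int_0^\alpha t^{q'\theta}\,\frac{dt}{t}\Bigr)^{1/q'} \|t^{-\theta} u\|_{L^q(A_0, dt/t)} \longrightarrow 0
\]
as $\alpha \to 0$, and symmetrically on $(\beta, \infty)$ using $t^{1-\theta}$ and the $A_1$ norm. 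The step function $v$ is of the form \eqref{ee6.2} but need not satisfy $\int_0^\infty v(t)\, dt/t = a$. The residue $R := a - \int_0^\infty v(t)\, dt/t$ lies in $\Delta(\bar{A})$ (since $a$ and each $F_j$ do), and will be small in both $A_0$ and $A_1$ by the estimates above. Fixing once and for all a nonnegative bounded $\phi$ supported in a compact subinterval of $(0,\infty)$ with $\int_0^\infty \phi(t)\, dt/t = 1$, the corrected function $w(t) := v(t) + R\,\phi(t)$ is still of the simple form \eqref{ee6.2}, satisfies $a = \int_0^\infty w(t)\, dt/t$ exactly, and differs from $u$ in $S$-norm by $O(\eta)$, yielding $\|a\|_{S^*} \le (1 + O(\eta))\|a\|_S$.

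The main technical obstacle is the residue absorption: one must verify that the smallness of $u - v$ in the weighted $L^q(A_i, dt/t)$ norms forces $\|R\|_{A_i} \to 0$ for $i = 0, 1$, so that the correction term $R\phi$ contributes negligibly to the $S$-norm of $w$. This follows by splitting $\int_0^\infty \|(u-v)(t)\|_{A_i}\, dt/t$ into the bounded piece over $[\alpha, \beta]$ (where H\"older applied with weight $t^{i-\theta}$ is effective because $t^{-(i-\theta)}$ is in $L^{q'}(dt/t)$ over bounded intervals of $(0,\infty)$) and the tail contribution from the complement (which is precisely the integral controlled in the previous paragraph).
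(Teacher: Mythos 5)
There is a genuine gap in the residue-absorption step, which is the heart of the reverse inequality $\|a\|_{S^*} \lesssim \|a\|_S$. You need $\|R\|_{A_0}$ and $\|R\|_{A_1}$ \emph{both} small, because the contribution of the correction $R\phi$ to the $S$-norm of $w$ is $\|R\|_{A_0}\|t^{-\theta}\phi\|_{L^q(dt/t)}$ on the $A_0$ side and $\|R\|_{A_1}\|t^{1-\theta}\phi\|_{L^q(dt/t)}$ on the $A_1$ side. But your H\"older estimate only controls the tail on one side for each norm: on $(0,\alpha)$ you use the weight $t^\theta$, whose $L^{q'}(dt/t)$ norm is finite near $0$, so you get $\|\int_0^\alpha u\,dt/t\|_{A_0}$ small; and symmetrically on $(\beta,\infty)$ you get $\|\int_\beta^\infty u\,dt/t\|_{A_1}$ small. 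The ``wrong-side'' pieces $\|\int_\beta^\infty u\,dt/t\|_{A_0}$ and $\|\int_0^\alpha u\,dt/t\|_{A_1}$ are \emph{not} controlled: the corresponding H\"older weights $t^\theta$ on $(\beta,\infty)$ and $t^{\theta-1}$ on $(0,\alpha)$ have divergent $L^{q'}(dt/t)$ norm, so the hypothesis that $\|t^{-\theta}u\|_{L^q(A_0,dt/t)}$ and $\|t^{1-\theta}u\|_{L^q(A_1,dt/t)}$ are finite gives no bound on these quantities — they may even be $+\infty$, i.e.\ $\int_0^\alpha u\,dt/t$ need not converge in $A_1$ at all. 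So $R$ can be large (or undefined) in one of the two norms, and the single correction $R\phi$ cannot be made to have small $S$-norm. (A secondary issue: for $q=\infty$ step functions are not dense in the weighted $L^\infty$ space, so the approximation step also needs a separate argument there.)

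This is exactly why the paper does \emph{not} try to prove $S^*\sim S$ by a direct density argument, but instead routes through the two-piece representation $\underline{S}^*$, where $a=a_0(t)+a_1(t)$ for all $t$. In that formulation the residue can be placed where it is damped by the weight: the $a_0$-piece's residue $a\chi_{[2^{M+1},\infty)}(t)$ sits at large $t$ where $t^{-\theta}$ decays, and the $a_1$-piece's residue $a\chi_{(0,2^{-M})}(t)$ sits at small $t$ where $t^{1-\theta}$ decays — each residue is measured only in the norm where it is harmless. The passage from $S^*$ to $\underline{S}^*$ is done separately (in \eqref{e1.11}) by mollifying and differentiating, which stays within compactly supported simple data. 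If you want to salvage your one-function correction, you would essentially be forced to rediscover this two-sided placement of the residue, e.g.\ by decomposing $R$ itself into pieces living on the correct sides — at which point you have reproduced the $\underline{S}^*$ mechanism.
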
   
See \cite[Theorem 3.12.1]{BL} for the case $q<\infty$ 
and \cite[3.14.12]{BL} for the case $q=\infty$ .  
We give a proof of Lemma \ref{L1.8} 
in the following section.  

\section{Proof of Lemma $\ref{L1.8}$} \label{sec7}  

Let $\underline{S}(\bar{A}, (r_0,r_1),\theta)$ be 
the subspace of $\Sigma(\bar{A})$ of all $a\in \Sigma(\bar{A})$ 
such that 
\begin{multline}\label{e1.10}
a=a_0(t)+a_1(t) \quad \text{for every $t>0$, with}  
\\ 
\left\|t^{-\theta}a_0(t) \right\|_{L^{r_0}(A_0, dt/t)}<\infty, \quad 
 \left\|t^{1-\theta}a_1(t) \right\|_{L^{r_1}(A_1, dt/t)} <\infty.
\end{multline}  
The norm is defined as 
\begin{multline*}
\left\|a\right\|_{\underline{S}(\bar{A}, (r_0,r_1),\theta)}
\\
=\inf\left\{\left\|t^{-\theta}a_0(t) \right\|_{L^{r_0}(A_0, dt/t)} 
+\left\|t^{1-\theta}a_1(t) \right\|_{L^{r_1}(A_1, dt/t)}: 
 \text{$a_0$, $a_1$ are as in \eqref{e1.10}}\right\}.  
\end{multline*}
Let $\underline{S}(\bar{A}, q,\theta)=\underline{S}(\bar{A}, (q,q),\theta)$. 
\par We assume that 
\begin{align} \label{ee7.2} 
a_0(t)(x)&=\sum_{j=1}^M G_j(x)g_j(t), 
\\  
a_1(t)(x)&=\sum_{j=1}^M H_j(x)h_j(t),   \notag 
\end{align}  
where  $G_j, H_j \in A_0\cap A_1$ and $g_j, h_j$ are bounded measurable 
functions supported on $[\epsilon, \infty)$ and $(0, \tau]$, respectively, 
for some $\epsilon, \tau>0$.    
\par 
Let $a\in \Delta(\bar{A})$ and 
\begin{multline*}
\left\|a\right\|_{\underline{S}^*(\bar{A}, q,\theta)}
\\
=\inf\left\{\left\|t^{-\theta}a_0(t) \right\|_{L^{q}(A_0, dt/t)} 
+\left\|t^{1-\theta}a_1(t) \right\|_{L^{q}(A_1, dt/t)}: 
 (a_0, a_1) \in \mathscr{G}(a, \bar{A})\right\}, 
\end{multline*}
where 
$$
\mathscr G(a,  \bar{A})=\left\{(a_0,a_1): 
\text{$a_0$, $a_1$ are as in \eqref{ee7.2} and  
$a=a_0(s)+a_1(s)$ for all $s>0$} \right\}.   
$$ 
The conclusion of Lemma \ref{L1.8} follows from the next two results.  

\begin{equation}\label{e1.11}  
\|a\|_{S^*(\bar{A},q, \theta)}\sim 
\|a\|_{\underline{S}^*(\bar{A}, q,\theta)},    
\end{equation} 
\begin{equation}\label{e1.12}  
\|a\|_{\underline{S}^*(\bar{A}, q,\theta)}\sim \|a\|_{\bar{A}_{\theta,q}},    
\end{equation}  
where $a\in \Delta(\bar{A})$.  
\par 
We note that $\bar{A}_{\theta,q}$ is as in \cite[Chap.3]{BL}, although 
the norms of $S^*(\bar{A},q, \theta)$ and $\underline{S}^*(\bar{A}, q,\theta)$ 
are stated in expressions slightly different from those of 
$S(\bar{A},q, \theta)$ and $\underline{S}(\bar{A}, q,\theta)$, respectively.

\begin{proof}[Proof of \eqref{e1.11}]
We first prove $\|a\|_{S^*(\bar{A},q, \theta)} \gtrsim  
\|a\|_{\underline{S}^*(\bar{A}, q,\theta)}$.  
Let $a=\int_0^\infty u(s)ds/s$  
 with $u$ satisfying \eqref{ee6.2}.   If we define 
$$a_0(t)=\int_0^1u(ts)\, \frac{ds}{s}, \quad  
a_1(t)=\int_1^\infty u(ts)\, \frac{ds}{s}, $$    
then $(a_0, a_1)\in \mathscr G(a,  \bar{A})$ and 
$$\left\|t^{-\theta}a_0(t) \right\|_{L^{q}(A_0, dt/t)}
\leq \theta^{-1}\left\|t^{-\theta}u(t) \right\|_{L^{q}(A_0, dt/t)}, $$  
$$\left\|t^{1-\theta}a_1(t) \right\|_{L^{q}(A_1, dt/t)}
\leq (1-\theta)^{-1}\left\|t^{1-\theta}u(t) \right\|_{L^{q}(A_1, dt/t)}. $$  
This implies that 
$\|a\|_{\underline{S}^*(\bar{A}, q,\theta)}\leq C_\theta 
\|a\|_{S^*(\bar{A}, q,\theta)}$.  
\par 
Next, let $(a_0, a_1)\in \mathscr G(a,  \bar{A})$. 
Take $\varphi\in C_0^\infty(\Bbb R)$ such that $\supp(\varphi) 
\subset [1,2]$ and $\int_0^\infty \varphi(s)\, ds/s=1$.  Define 
$$b_j(t)=\int_0^\infty \varphi(s)a_j(ts^{-1})\, \frac{ds}{s}
=\int_0^\infty \varphi(ts^{-1})a_j(s)\, \frac{ds}{s}.  $$  
Then $a=b_0(t)+b_1(t)$, 
$b_0(t)=0$ if $t$ is small, $b_1(t)=0$ if $t$ is large.  
Also we have 
\begin{gather}\label{ee1.13}
\|t^{-\theta}tb_0'(t)\|_{L^{q}(A_0, dt/t)}\leq C 
\|t^{-\theta} a_0(t)\|_{L^{q}(A_0, dt/t)},  
\\   
\label{ee1.14}
\|t^{1-\theta}tb_1'(t)\|_{L^{q}(A_1, dt/t)}\leq C 
\|t^{1-\theta} a_1(t)\|_{L^{q}(A_1, dt/t)}.   
\end{gather}
Let 
$$u(t)=tb_0'(t)=-tb_1'(t) \in \Delta(\bar{A}). $$   
Then, $u$ is supported in a compact subinterval of $(0, \infty)$ 
and $u$ is as in \eqref{ee6.2}. 
We note that 
$$\int_0^\infty u(t)\frac{dt}{t}=\int_0^1 b_0'(t)\, dt 
-\int_1^\infty b_1'(t)\, dt=b_0(1)+b_1(1)=a.  $$  
Thus 
$$\|a\|_{S^*(\bar{A}, q,\theta)}\leq 
C\max\left(\|t^{-\theta}tb_0'(t)\|_{L^{q}(A_0, dt/t)}, 
\|t^{1-\theta}tb_1'(t)\|_{L^{q}(A_1, dt/t)}\right). $$  
By this and \eqref{ee1.13}, \eqref{ee1.14} we have 
$\|a\|_{S^*(\bar{A}, q,\theta)}\leq 
C\|a\|_{\underline{S}^*(\bar{A}, q,\theta)}$.  
This completes the proof of \eqref{e1.11}. 
\end{proof}
\begin{proof}[Proof of \eqref{e1.12}]  We first consider the case $q<\infty$. 
We easily see that 
\begin{equation*} 
\|a\|_{\underline{S}^*(\bar{A}, q,\theta)} 
\sim 
\inf_{(a_0, a_1)\in \mathscr G(a,  \bar{A})}
\left(\left\|t^{-\theta}a_0(t) \right\|_{L^{q}(A_0, dt/t)}^q 
+\left\|t^{1-\theta}a_1(t) \right\|_{L^{q}(A_1, dt/t)}^q\right)^{1/q}.  
\end{equation*}  
This implies 
\begin{align} \label{ee1.15}
\|a\|_{\underline{S}^*(\bar{A}, q,\theta)}^q 
&\gtrsim
\int_0^\infty\inf_{(a_0, a_1)\in \mathscr G(a,  \bar{A})}
\left(t^{-q\theta}\left\|a_0(t) \right\|_{A_0}^q 
+t^{q(1-\theta)}\left\|a_1(t) \right\|_{A_1}^q\right)
\,\frac{dt}{t} 
\\  \notag 
&\gtrsim \int_0^\infty \left(t^{-\theta} K(t, a; A_0, A_1) \right)^q 
\,\frac{dt}{t},   
\end{align}  
where $K$ is the functional as in \cite[Chap. 3]{BL}.  
It follows that 
\begin{equation}\label{e2.12+}
\|a\|_{(A_0, A_1)_{\theta,q}}\leq C\|a\|_{\underline{S}^*(\bar{A}, q,\theta)}. 
\end{equation}   
\par 
Next we prove the reverse inequality.  We note the following. 
\begin{align*} 
&\int_0^\infty t^{-q\theta} K(t, a; A_0, A_1)^q\, \frac{dt}{t} 
\sim \sum_{k=-\infty}^\infty 2^{-kq\theta}K(2^k, a; A_0, A_1)^q 
\\ 
& \gtrsim \sum_{k=-\infty}^\infty 2^{-kq\theta}\left(\|a_0(2^k)\|_{A_0}^q +
2^{kq}\|a_1(2^k)\|_{A_1}^q\right) -\epsilon  
\\ 
& \gtrsim \sum_k\int_0^\infty t^{-q\theta}\left(\left\|a_0(2^k)
\chi_{[2^k, 2^{k+1})}(t)\right\|_{A_0}^q 
+ t^q\left\|a_1(2^k)\chi_{[2^k, 2^{k+1})}(t)\right\|_{A_1}^q\right)
\, \frac{dt}{t} -c\epsilon 
\\ 
& \gtrsim \int_0^\infty t^{-q\theta}\left(\left\|\sum_k a_0(2^k)
\chi_{[2^k, 2^{k+1})}(t)\right\|_{A_0}^q 
+ t^q\left\|\sum_k a_1(2^k)\chi_{[2^k, 2^{k+1})}(t)\right\|_{A_1}^q\right)
\, \frac{dt}{t} -c\epsilon, 
\end{align*}  
for any $\epsilon>0$ with some $a_0(2^k)\in A_0$, $a_1(2^k)\in A_1$  
 such that $a=a_0(2^k)+ a_1(2^k)$. 
We note that $a_0(2^k), a_1(2^k) \in A_0\cap A_1$ since $a\in 
 A_0\cap A_1$. 
 Thus we have 
\begin{align*} 
&\int_0^\infty t^{-q\theta} K(t, a; A_0, A_1)^q\, \frac{dt}{t} 
\\ 
& \gtrsim \int_0^\infty \left(\left\|t^{-\theta}\sum_k a_0(2^k)
\chi_{[2^{k}, 2^{k+1})}(t)\right\|_{A_0}^q 
+ \left\|t^{1-\theta}\sum_k a_1(2^k)\chi_{[2^{k}, 2^{k+1})}(t)
\right\|_{A_1}^q\right)\, \frac{dt}{t} -c\epsilon 
\\ 
& = \lim_{M\to \infty}\int_0^\infty \left(\left\|t^{-\theta}
\sum_{k=-M}^M a_0(2^k) \chi_{[2^{k}, 2^{k+1})}(t)\right\|_{A_0}^q \right.  
\\ 
&\quad\qquad \qquad \qquad \qquad \left. + \left\|t^{1-\theta}
\sum_{k=-M}^M a_1(2^k)\chi_{[2^{k}, 2^{k+1})}(t) \right\|_{A_1}^q\right)
\, \frac{dt}{t} -c\epsilon 
\\ 
& =: I.  
\end{align*} 
To comply with the definition of the norm of 
$\underline{S}^*(\bar{A}, q,\theta)$ in \eqref{e1.12} 
(see \eqref{ee7.2}),   this  may be modified as follows. 
\begin{align*} 
I & = \lim_{M\to \infty}\int_0^\infty \left(\left\|t^{-\theta}
\left(\sum_{k=-M}^M a_0(2^k)
\chi_{[2^{k}, 2^{k+1})}(t) + a\chi_{[2^{M+1}, \infty)}(t)\right)
\right\|_{A_0}^q 
\right.  
\\ 
&\quad\left. + \left\|t^{1-\theta}\left(
\sum_{k=-M}^M a_1(2^k)\chi_{[2^{k}, 2^{k+1})}(t)
+a\chi_{(0, 2^{-M})}(t)\right) \right\|_{A_1}^q\right)\, \frac{dt}{t} 
-c\epsilon.  
\\ 
&\gtrsim \inf_{(a_0, a_1)\in \mathscr G(a,  \bar{A})}
\left(\left\|t^{-\theta}a_0(t) \right\|_{L^{q}(A_0, dt/t)}^q 
+\left\|t^{1-\theta}a_1(t) \right\|_{L^{q}(A_1, dt/t)}^q \right) -c\epsilon    
\\ 
&\gtrsim \|a\|_{\underline{S}^*(\bar{A}, q,\theta)}^q -c\epsilon,  
\end{align*}   
which implies that 
\begin{equation} \label{ee.7.9}
\int_0^\infty t^{-q\theta} K(t, a; A_0, A_1)^q\, \frac{dt}{t} 
\gtrsim \|a\|_{\underline{S}^*(\bar{A}, q,\theta)}^q.  
\end{equation}
Combining this with \eqref{e2.12+}, we have 
\begin{equation*} \label{equiv}
\|a\|_{\underline{S}^*(\bar{A}, q,\theta)}\sim \|a\|_{(A_0, A_1)_{\theta,q}}. 
\end{equation*}   
\par 
The case $q=\infty$ can be handled by an obvious modification of the 
arguments for the case $q<\infty$ as follows. 
As in \eqref{ee1.15}, we have 
\begin{align} \label{eee1.15}
\|a\|_{\underline{S}^*(\bar{A}, \infty,\theta)}  
&= \inf_{(a_0, a_1)\in \mathscr G(a,  \bar{A})}
\left(\sup_{t>0}t^{-\theta}\left\|a_0(t) \right\|_{A_0} 
+\sup_{t>0}t^{1-\theta}\left\|a_1(t) \right\|_{A_1}\right) 
\\ \notag 
&\geq 
\inf_{(a_0, a_1)\in \mathscr G(a,  \bar{A})} 
\sup_{t>0}t^{-\theta}\left(\left\|a_0(t) \right\|_{A_0}
+t\left\|a_1(t) \right\|_{A_1}\right) 
\\  \notag 
&\geq \sup_{t>0} t^{-\theta} K(t, a; A_0, A_1)
\\ 
&= \|a\|_{(A_0, A_1)_{\theta,\infty}}.   \notag  
\end{align}  
Next, we prove the reverse inequality.  As in the proof of \eqref{ee.7.9}
we have 
\begin{align*} 
&\sup_{t>0} t^{-\theta} K(t, a; A_0, A_1)  
\sim \sup_{k\in \Bbb Z} 2^{-k\theta}K(2^k, a; A_0, A_1)  
\\ 
&  \gtrsim \sup_{k\in \Bbb Z} 2^{-k\theta}\left(\|a_0(2^k)\|_{A_0} +
2^{k}\|a_1(2^k)\|_{A_1}\right) -\epsilon  
\\ 
& \gtrsim \sup_{t>0}  t^{-\theta}\left(\left\|\sum_{k\in \Bbb Z} a_0(2^k)
\chi_{[2^k, 2^{k+1})}(t)\right\|_{A_0} 
+ t\left\|\sum_{k\in \Bbb Z} 
a_1(2^k)\chi_{[2^k, 2^{k+1})}(t)\right\|_{A_1}\right) -c\epsilon. 
\end{align*}  
We modify this as follows to comply with the definition of the norm of 
$\underline{S}^*(\bar{A}, q,\theta)$ in \eqref{e1.12}. 
\begin{align*} 
&\sup_{t>0} t^{-\theta} K(t, a; A_0, A_1)  
\\ 
& \gtrsim  \lim_{M \to \infty}\sup_{t>0} \left(\left\|t^{-\theta}
\left(\sum_{k=-M}^M a_0(2^k)
\chi_{[2^{k}, 2^{k+1})}(t) + a\chi_{[2^{M+1}, \infty)}(t)\right)\right\|_{A_0}
\right.  
\\ 
&\quad\left. + \left\|t^{1-\theta}\left(
\sum_{k=-M}^M a_1(2^k)\chi_{[2^{k}, 2^{k+1})}(t)
+a\chi_{(0, 2^{-M})}(t)\right) \right\|_{A_1}\right) -c\epsilon 
\\ 
& \geq  \frac{1}{2}\lim_{M \to \infty}\left(\sup_{t>0}\left\|t^{-\theta}
\left(\sum_{k=-M}^M a_0(2^k)
\chi_{[2^{k}, 2^{k+1})}(t) + a\chi_{[2^{M+1}, \infty)}(t)\right)\right\|_{A_0}
\right.
\\ 
&\quad\left. + \sup_{t>0}\left\|t^{1-\theta}\left(
\sum_{k=-M}^M a_1(2^k)\chi_{[2^{k}, 2^{k+1})}(t)
+a\chi_{(0, 2^{-M})}(t)\right) \right\|_{A_1}\right) -c\epsilon 
\\ 
&\gtrsim \inf_{(a_0, a_1)\in \mathscr G(a,  \bar{A})}
\left(\left\|t^{-\theta}a_0(t) \right\|_{L^{\infty}(A_0, dt/t)}
+\left\|t^{1-\theta}a_1(t) \right\|_{L^{\infty}(A_1, dt/t)}\right) 
-c\epsilon    
\\ 
&= \|a\|_{\underline{S}^*(\bar{A}, \infty,\theta)} -c\epsilon   
\end{align*}   
for all $\epsilon>0$, where the first inequality holds since $0<\theta<1$. 
Thus it follows that 
\begin{equation} \label{ee.7.11}
\left\|t^{-\theta} K(t, a; A_0, A_1)\right\|_{L^\infty(dt/t)} 
\gtrsim \|a\|_{\underline{S}^*(\bar{A}, \infty,\theta)}.  
\end{equation}
By \eqref{eee1.15} and \eqref{ee.7.11} we have 
\begin{equation*} \label{equiv}
\|a\|_{\underline{S}^*(\bar{A}, \infty,\theta)}\sim 
\|a\|_{(A_0, A_1)_{\theta,\infty}}. 
\end{equation*}   
This proves \eqref{e1.12} for $q=\infty$. 

\end{proof} 
This completes the proof of Lemma \ref{L1.8}.  We refer to \cite{LP} and 
\cite[3.12]{BL} for relevant results.  

\section{Proof of Lemma  $\ref{L1.9}$}   \label{sec8}  

To prove Lemma \ref{L1.9},  we need the next two results.  
\begin{lemma}\label{L1.10} 
Let $f\in L^p+L^\infty, 1\leq p<\infty$.  Then 
\begin{equation} 
K(t, f;L^p, L^\infty) \sim \left(\int_0^{t^p}(f^*(s))^p\, ds\right)^{1/p},  
\end{equation} 
where $f^*$ denotes the nonincreasing rearrangement of $f$. 
\end{lemma}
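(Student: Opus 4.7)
The plan is to establish both directions of the equivalence separately, using the standard truncation decomposition for the upper bound and a simple rearrangement inequality for the lower bound.

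For the upper bound, I would take the natural truncation at height $\lambda = f^*(t^p)$: set
$$f_1(x) = \sgn(f(x))\min(|f(x)|, \lambda), \qquad f_0(x) = f(x) - f_1(x),$$
so that $f = f_0 + f_1$, $\|f_1\|_\infty \leq \lambda = f^*(t^p)$, and $|f_0(x)| = (|f(x)| - \lambda)_+$. The decreasing rearrangement of $(|f|-\lambda)_+$ is $(f^*(s) - \lambda)_+$, which vanishes for $s \geq \mu_f(\lambda)$; hence
$$\|f_0\|_p^p = \int_0^{\mu_f(\lambda)}(f^*(s) - \lambda)^p\, ds \leq \int_0^{t^p}(f^*(s))^p\, ds,$$
using $\mu_f(f^*(t^p)) \leq t^p$. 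Since $f^*$ is nonincreasing, $t\lambda = tf^*(t^p) \leq \bigl(\int_0^{t^p}(f^*(s))^p\, ds\bigr)^{1/p}$, so adding the two contributions yields
$$K(t, f; L^p, L^\infty) \leq \|f_0\|_p + t\|f_1\|_\infty \leq 2\left(\int_0^{t^p}(f^*(s))^p\, ds\right)^{1/p}.$$

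For the lower bound, let $f = f_0 + f_1$ be any admissible decomposition. Since $|f(x)| \leq |f_0(x)| + \|f_1\|_\infty$ a.e., the standard subadditivity of the decreasing rearrangement (applied to a sum where one summand is constant) gives $f^*(s) \leq f_0^*(s) + \|f_1\|_\infty$ for all $s > 0$. Then Minkowski's inequality on the interval $(0, t^p)$ yields
$$\left(\int_0^{t^p}(f^*(s))^p\, ds\right)^{1/p} \leq \left(\int_0^{t^p}f_0^*(s)^p\, ds\right)^{1/p} + \left(\int_0^{t^p}\|f_1\|_\infty^p\, ds\right)^{1/p} \leq \|f_0\|_p + t\|f_1\|_\infty,$$
where in the first term I have used $\int_0^{t^p}f_0^*(s)^p\, ds \leq \int_0^\infty f_0^*(s)^p\, ds = \|f_0\|_p^p$. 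Taking the infimum over all decompositions completes the lower bound.

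The main technical point, which is the only place some care is required, is the rearrangement inequality $f^*(s) \leq f_0^*(s) + \|f_1\|_\infty$ used in the lower bound. I would deduce it from the elementary observation that for any $\sigma \geq 0$ one has $\{|f| > \sigma + \|f_1\|_\infty\} \subset \{|f_0| > \sigma\}$, whence $\mu_f(\sigma + \|f_1\|_\infty) \leq \mu_{f_0}(\sigma)$; rewriting this in terms of the generalized inverses $f^*$ and $f_0^*$ gives the claim. Apart from this, everything reduces to the definition of the $K$-functional and properties of the decreasing rearrangement, so the argument is essentially routine.
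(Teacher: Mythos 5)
Your proof is correct. The paper does not actually prove Lemma \ref{L1.10}; it simply cites Theorem 5.2.1 of Bergh--L\"ofstr\"om, so there is no internal proof to compare against. Your argument --- truncating $f$ at height $\lambda = f^*(t^p)$ to get the upper bound, observing that the rearrangement of $(|f|-\lambda)_+$ is $(f^*(\cdot)-\lambda)_+$ and that $\mu_f(f^*(t^p)) \le t^p$, and then using the rearrangement inequality $f^*(s) \le f_0^*(s) + \|f_1\|_\infty$ together with Minkowski on $(0,t^p)$ for the lower bound --- is precisely the standard proof that appears in that reference, specialized to $p_1=\infty$ (where the second term $t f^*(t^p)$ of the general formula is absorbed into $\bigl(\int_0^{t^p}(f^*)^p\bigr)^{1/p}$ exactly as you note). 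All the steps check out.
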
 
\begin{lemma}\label{L1.11} 
Suppose that $1\leq p \leq q \leq\infty$ and $ p <\infty$. 
Let $1/r=(1-\theta)/p$, $0<\theta<1$. Then 
\begin{equation} 
(L^p, L^\infty)_{\theta,q}= L^{r,q}, \quad \text{with equivalent norms.}
\end{equation}
\end{lemma}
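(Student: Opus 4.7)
The plan is to compute the $K$-functional explicitly via Lemma~\ref{L1.10} and then identify the resulting weighted integral with the Lorentz norm $\|\cdot\|_{L^{r,q}}$. Writing $F(u)=\int_0^u (f^*(s))^p\,ds$, Lemma~\ref{L1.10} gives $K(t,f;L^p,L^\infty)\sim F(t^p)^{1/p}$, so for $q<\infty$
$$\|f\|_{(L^p,L^\infty)_{\theta,q}}^q\sim\int_0^\infty t^{-\theta q}F(t^p)^{q/p}\,\frac{dt}{t}.$$
The substitution $u=t^p$ reduces this to a constant multiple of $\int_0^\infty u^{-\theta q/p}F(u)^{q/p}\,du/u$, and it suffices to show that this quantity is comparable to $\|f\|_{L^{r,q}}^q=\int_0^\infty u^{q/r-1}(f^*(u))^q\,du$.

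For the lower bound I would use that $f^*$ is non-increasing, which gives $F(u)\ge(u/2)(f^*(u))^p$, hence $u^{-\theta/p}F(u)^{1/p}\gtrsim u^{(1-\theta)/p}f^*(u)=u^{1/r}f^*(u)$; raising to the $q$-th power and integrating $du/u$ yields $\int_0^\infty u^{-\theta q/p}F(u)^{q/p}\,du/u\gtrsim\|f\|_{L^{r,q}}^q$. For the upper bound, I would apply the classical weighted Hardy inequality with parameters $a=q/p\ge 1$ and $\gamma=\theta q/p>0$:
$$\int_0^\infty u^{-\gamma-1}\left(\int_0^u h(s)\,ds\right)^a du\le C_{\gamma,a}\int_0^\infty u^{-\gamma-1+a}h(u)^a\,du,$$
taking $h=(f^*)^p$. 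The exponent on the right collapses to $(1-\theta)q/p-1=q/r-1$, which is exactly the weight in $\|f\|_{L^{r,q}}^q$.

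For the endpoint $q=\infty$, the same substitution gives $\|f\|_{(L^p,L^\infty)_{\theta,\infty}}\sim\sup_{u>0}u^{-\theta/p}F(u)^{1/p}$. The lower estimate $\gtrsim\sup_u u^{1/r}f^*(u)$ again follows from $F(u)\ge(u/2)(f^*(u))^p$. For the upper estimate, setting $M=\sup_u u^{1/r}f^*(u)$, the bound $(f^*(s))^p\le M^p s^{-p/r}$ and the fact that $p<r$ (which follows from $\theta>0$) yield $F(u)\le C M^p u^{1-p/r}$, and hence $u^{-\theta/p}F(u)^{1/p}\lesssim M$ after using $1/p-1/r=\theta/p$.

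The main obstacle is bookkeeping: the hypothesis $p\le q$ is exactly what makes $a=q/p\ge 1$, so that Hardy's inequality applies, and the exponents must be arranged so that the identity $(1-\theta)/p=1/r$ produces the correct Lorentz weight on the right. Once these alignments are in place, the monotonicity of $f^*$ and Hardy's inequality furnish the two directions of the equivalence.
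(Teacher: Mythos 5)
Your proof is correct, and for the key direction $q<\infty$ it takes a slightly different route than the paper. Both proofs start by invoking Lemma~\ref{L1.10}, substitute $u=t^p$, and get the lower bound $\|f\|_{(L^p,L^\infty)_{\theta,q}}\gtrsim\|f\|_{L^{r,q}}$ from the monotonicity of $f^*$ (here $F(u)\ge u(f^*(u))^p$, so your factor $1/2$ is harmless slack). The difference is in the upper bound: you apply the weighted Hardy inequality
\[
\int_0^\infty u^{-\gamma-1}\Bigl(\int_0^u h\Bigr)^a\,du\le C\int_0^\infty u^{a-\gamma-1}h^a\,du,\qquad a\ge1,\ \gamma>0,
\]
with $a=q/p$, $\gamma=\theta q/p$, $h=(f^*)^p$, whereas the paper rewrites $\int_0^{t^p}(f^*)^p$ as $t^p\int_0^1 (f^*(st^p))^p\,ds$ and applies Minkowski's integral inequality (with exponent $q/p\ge1$) before changing variables. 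Both hinge on the same hypothesis $p\le q$, and both are standard: Hardy's inequality is arguably the more direct statement of what is being used, while Minkowski's argument avoids quoting Hardy and keeps the computation self-contained. Your $q=\infty$ argument is also correct and essentially matches the paper's (pointwise bound $f^*(s)\le Ms^{-1/r}$, integrate, and cancel the powers of $u$ using $1/p-1/r=\theta/p$).
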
 
Lemma \ref{L1.10} is found in Theorem 5.2.1 of \cite{BL}. Lemma \ref{L1.11} 
is also almost in the same theorem (the case $p=q<\infty$ is not stated 
there). Here we give a proof for completeness. 
\begin{proof}[Proof of Lemma $\ref{L1.11}$]  We first consider the case 
$q<\infty$. 
Since 
$$\|f\|_{(L^{p}, L^\infty)_{\theta,q}}=\left(\int_0^\infty 
\left(t^{-\theta}K(t,f;L^p, L^\infty)\right)^q\, \frac{dt}{t}\right)^{1/q}, 
\quad p\leq q<\infty, $$ 
by Lemma \ref{L1.10} we have 
\begin{align*}  
\|f\|_{(L^{p}, L^\infty)_{\theta,q}}&\sim 
\left(\int_0^\infty 
\left(t^{-\theta p}\int_0^{t^{p}}(f^*(s))^{p}\, ds\right)^{q/p}
\, \frac{dt}{t}\right)^{1/q} 
\\ 
&= \left(\int_0^\infty 
\left(t^{-\theta p+p}\int_0^{1}(f^*(st^{p}))^{p}
\, ds\right)^{q/p}\, \frac{dt}{t}\right)^{1/q}. 
\end{align*}  
Since $q/p\geq 1$, Minkowski's inequality with changing variables implies that 
\begin{align*}    
\|f\|_{(L^{p}, L^\infty)_{\theta,q}}
&\leq C\left(\int_0^1\left(\int_0^{\infty}t^{(-\theta+1)q}(f^*(st^{p}))^{q}
\, \frac{dt}{t}\right)^{p/q}\, ds\right)^{1/p} 
\\ 
&= C\left(\int_0^1 p^{-p/q}s^{-p/r}
\left(\int_0^{\infty}t^{q/r}(f^*(t))^{q}
\, \frac{dt}{t}\right)^{p/q}\, ds\right)^{1/p} 
\\ 
\\ 
&= Cp^{-1/q}\left(\int_0^1 s^{\theta-1}\, ds\right)^{1/p}
\left(\int_0^{\infty}t^{q/r}(f^*(t))^{q}
\, \frac{dt}{t}\right)^{1/q}  
\\ 
&= Cp^{-1/q}\theta^{-1/p} \|f\|_{r,q}, 
\end{align*}   
where we have used the relation $1/r=(1-\theta)/p$. 
\par 
To prove the reverse inequality, we simply apply that $0\leq f^*(t^{p})\leq 
f^*(s)$, if $0<s \leq t^p$,  to get 
\begin{align*}  
\|f\|_{(L^{p}, L^\infty)_{\theta,q}}
&\geq C\left(\int_0^\infty 
\left(t^{-\theta p}\int_0^{t^{p}}(f^*(s))^{p}\, ds\right)^{q/p}
\, \frac{dt}{t}\right)^{1/q}  
\\ 
&\geq 
C\left(\int_0^\infty 
\left(t^{-\theta p}t^{p}(f^*(t^{p}))^{p}\right)^{q/p}
\, \frac{dt}{t}\right)^{1/q} 
\\ 
&\geq C \|f\|_{r,q}. 
\end{align*}  
\par Next we consider the case $q=\infty$. By Lemma \ref{L1.10} we see that 
\begin{align*} 
\|f\|_{(L^{p}, L^\infty)_{\theta,\infty}}&=\sup_{t>0} 
t^{-\theta}K(t,f;L^p, L^\infty)  
\\ 
&\leq C\sup_{t>0} 
t^{-\theta}\left(\int_0^{t^p}(f^*(s))^p\, ds\right)^{1/p}
\\ 
&\leq C\sup_{s>0} s^{1/r}f^*(s) \sup_{t>0} 
t^{-\theta}\left(\int_0^{t^p}s^{-p/r}\, ds\right)^{1/p}=:I.  
\end{align*}  
Using the relation $1/r=(1-\theta)/p$, we have 
\begin{align*} 
I &= C\sup_{s>0} s^{1/r}f^*(s) \sup_{t>0} 
t^{-\theta}\left(\int_0^{t^p}s^{\theta-1}\, ds\right)^{1/p}
\\ 
&= C\theta^{-1/p}\sup_{s>0} s^{1/r}f^*(s)
\\ 
&= C\theta^{-1/p}\|f\|_{r,\infty}. 
\end{align*}  
Also, 
\begin{align*} 
\|f\|_{(L^{p}, L^\infty)_{\theta,\infty}} 
&\geq C\sup_{t>0} 
t^{-\theta}\left(\int_0^{t^p}(f^*(s))^p\, ds\right)^{1/p} 
\\
&\geq C\sup_{t>0} t^{-\theta}tf^*(t^p) 
\\ 
&= C\sup_{t>0} t^{1/r}f^*(t) 
\\ 
&= C\|f\|_{r,\infty}. 
\end{align*}  
This completes the proof of the case $q=\infty$. 
  
\end{proof}

\begin{proof}[Proof of Lemma $\ref{L1.9}$]  
By Lemma \ref{L1.11}, since $r\leq q_j$, we have 
\begin{equation}\label{e1.21} 
 L^{p,q_j}=(L^r, L^\infty)_{\theta, q_j}, \quad j=0, 1. 
\end{equation} 
By Theorem 5.3.1 of \cite{BL} and Lemma \ref{L1.11}, we have  
\begin{equation}\label{e1.22} 
(L^{p,q_0}, L^{p,q_1})_{\eta, q}= L^{p,q}=(L^r,L^\infty)_{\theta, q}. 
\end{equation} 
Also, by Theorem 4.7.2 of \cite{BL}, 
\begin{equation}\label{e1.24} 
\left((L^r, L^\infty)_{\theta, q_0}, (L^r, L^\infty)_{\theta,q_1}
\right)_{[\eta]}=(L^r,L^\infty)_{\theta, q}. 
\end{equation} 
Although Theorem 4.7.2 is stated with $\theta_0, \theta_1\in (0,1)$ 
such that $\theta_0 \neq \theta_1$, we easily see that we may assume that 
$\theta_0=\theta_1=\theta$ to get the result above.  
Combining \eqref{e1.21}, \eqref{e1.22} and \eqref{e1.24},  we see that 
$$(L^{p,q_0}, L^{p,q_1})_{\eta, q}= 
\left(L^{p,q_0}, L^{p,q_1}\right)_{[\eta]}=L^{p,q}. $$     
This completes the proof of Lemma $\ref{L1.9}$.  
\end{proof} 
\par 
We give a proof of \eqref{e1.24} in Section 9 below.  

\section{Proof of \eqref{e1.24}}  \label{sec9} 
Let 
$$\ell^q_\eta=\left\{(\alpha_k)_{k\in \Bbb Z}: \alpha_k\in \Bbb C, 
k\in \Bbb Z, \|(\alpha_k)\|_{\ell^q_\eta}=\left(\sum_{k=-\infty}^\infty 
\left(2^{-\eta k}|\alpha_k|\right)^q\right)^{1/q}<\infty \right\},   $$  
where 
$1\leq q\leq \infty$,  $0< \eta<1$, with the usual modification when 
$q=\infty$.  
Let $1\leq q, q_0, q_1\leq \infty$, $1\leq r<\infty$, 
 $0<\theta, \eta<1$, 
$$\frac{1}{q}=\frac{1-\eta}{q_0}+\frac{\eta}{q_1}.$$  
Let $X_j=\bar{A}_{\theta, q_j}$,  $j=0, 1$, with $\bar{A}=(A_0, A_1)$, 
$A_0=L^r$, $A_1=L^\infty$. Let $\bar{X}=(X_0, X_1)$.   
\par 
Let $a \in \bar{A}_{\theta, q}$, $a\neq 0$. 
We show that $a \in \bar{X}_{[\eta]}$.  
By Lemma 3.2.3 and Theorem 3.3.1 of \cite{BL},  there exists  a sequence 
$(u_\nu)_{\nu\in \Bbb Z}$ in $\Delta(\bar{A})$ such that 
$a=\sum_\nu u_\nu$ in $\Sigma(\bar{A})$ and 
\begin{equation*}\label{} 
\left\|\left(J(2^\nu, u_\nu;\bar{A})\right)_\nu\right\|_{\ell_{\theta}^q} 
\leq C \|a\|_{\bar{A}_{\theta, q}}. 
\end{equation*}  
\par 
We first assume that $q<\infty$.  
For $\delta>0$ and $z\in \Bbb C$ with $0\leq \re z \leq 1$, let 
\begin{equation*}
f_\nu(z)=\left(2^{-\theta\nu}J(2^\nu,u_\nu;\bar{A})
\|a\|_{\bar{A}_{\theta, q}}^{-1}\right)^{q(1/q_1-1/q_0)(z-\eta)} u_\nu, 
\end{equation*} 
where $1/q_i=0$ if $q_i=\infty$,   
and 
$$f(z)=\exp(\delta(z-\eta)^2)\sum_\nu f_\nu(z). $$  
Then 
$$\left|\exp(-\delta(it-\eta)^2)\right|\|f(it)\|_{\bar{A}_{\theta, q_0}} 
\leq C\left\|\left(J(2^\nu, f_\nu(it);\bar{A})\right)_\nu\right\|_
{\ell_{\theta}^{q_0}} \leq C \|a\|_{\bar{A}_{\theta, q}}. 
 $$
Lemma 3.2.3 of \cite{BL} implies the first inequality. 
The second inequality can  
be seen as follows. First note that  $-\eta q(1/q_1-1/q_0)=q/q_0 -1$.  Thus, 
if $q_0<\infty$,  
\begin{align*} 
&\left\|\left(J(2^\nu, f_\nu(it);\bar{A})\right)_\nu\right\|_
{\ell_{\theta}^{q_0}}^{q_0} 
\\ 
&= \sum_\nu  2^{-\theta\nu q_0} 
2^{\theta\nu(q_0-q)}J(2^\nu, u_\nu;\bar{A})^{q-q_o}
\|a\|_{\bar{A}_{\theta, q}}^{q_0-q} J(2^\nu, u_\nu;\bar{A})^{q_o} 
\\
&= \sum_\nu 2^{-\theta q\nu}J(2^\nu, u_\nu;\bar{A})^{q}\|a\|_{\bar{A}_
{\theta, q}}^{q_0-q} 
\\ 
&\leq C\|a\|_{\bar{A}_{\theta, q}}^{q_0}.   
\end{align*}  
If $q_0=\infty$, then  $-\eta q(1/q_1-1/q_0)= -1$ and 
$$\left\|\left(J(2^\nu, f_\nu(it);\bar{A})\right)_\nu\right\|_
{\ell_{\theta}^{q_0}} = \sup_\nu 2^{-\theta\nu}2^{\theta\nu}
J(2^\nu, u_\nu;\bar{A})^{-1} \|a\|_{\bar{A}_{\theta, q}} J(2^\nu, u_\nu;\bar{A})=\|a\|_{\bar{A}_{\theta, q}}.  $$
\par 
Also, we have 
$$\left|\exp(-\delta(1+it-\eta)^2)\right|
\|f(1+it)\|_{\bar{A}_{\theta, q_1}} \leq 
C\left\|\left(J(2^\nu, f_\nu(1+it);\bar{A})\right)_\nu\right\|_
{\ell_{\theta}^{q_1}} \leq C \|a\|_{\bar{A}_{\theta, q}},  
 $$
since  $(1-\eta)q(1/q_1 - 1/q_0)=q/q_1-1$ and  
\begin{enumerate} 
\item[(a)]  if $q_1<\infty$,  
\begin{align*} 
&\left\|\left(J(2^\nu, f_\nu(1+it);\bar{A})\right)_\nu\right\|_{\ell_
{\theta}^{q_1}}^{q_1} 
\\ 
&= \sum_\nu  2^{-\theta\nu q_1} 
2^{\theta\nu(q_1-q)}J(2^\nu, u_\nu;\bar{A})^{q-q_1}
\|a\|_{\bar{A}_{\theta, q}}^{q_1-q} J(2^\nu, u_\nu;\bar{A})^{q_1} 
\\
&= \sum_\nu 2^{-\theta q\nu}J(2^\nu, u_\nu;\bar{A})^{q}
\|a\|_{\bar{A}_{\theta, q}}^{q_1-q} 
\\ 
&\leq C\|a\|_{\bar{A}_{\theta, q}}^{q_1};    
\end{align*} 
\item[(b)] 
if $q_1=\infty$, then  $(1-\eta) q(1/q_1-1/q_0)= -1$ and 
\begin{align*} 
\left\|\left(J(2^\nu, f_\nu(1+it);\bar{A})\right)_\nu\right\|_
{\ell_{\theta}^{q_1}} &= \sup_\nu 2^{-\theta\nu}2^{\theta\nu}
J(2^\nu, u_\nu;\bar{A})^{-1} \|a\|_{\bar{A}_{\theta, q}} J(2^\nu, u_\nu;\bar{A})\\ 
&=\|a\|_{\bar{A}_{\theta, q}}.  
\end{align*} 
\end{enumerate}   
Thus $f\in \mathscr F(\bar{A}_{\theta, q_0}, \bar{A}_{\theta, q_1})$ and 
$f(\eta)=a$; also $\|a\|_{\bar{X}_{[\eta]}}\leq C\|a\|_{\bar{A}_{\theta, q}}$ 
by letting $\delta \to 0$.  
\par 
If $q=\infty$, then $q_0=q_1=\infty$. Let $f_\nu(z)=u_\nu$. Then we can argue 
similarly (more directly) to the case $q<\infty$ to have the same conclusion.  
\par  
Next, we show that $a \in \bar{A}_{\theta, q}$ assuming 
$a \in \bar{X}_{[\eta]}$.  Take $f\in \mathscr F(\bar{A}_{\theta, q_0}, 
\bar{A}_{\theta, q_1})$ such that $f(\eta)=a$.  Define 
$$g(z)=2^{(z-\eta)\gamma}f(z). $$
 Then $g\in \mathscr F(\bar{A}_{\theta, q_0}, \bar{A}_{\theta, q_1})$ and 
$g(\eta)=a$. Thus $a$ can be expressed by the Poisson integral in 
$\Sigma(\bar{A})$ (see \cite[Chap. 4]{BL}):  
$$a=\int_{-\infty}^\infty P_0(\eta,t)g(it)\, dt + 
\int_{-\infty}^\infty P_1(\eta,t)g(1+it)\, dt. $$  
This proves 
\begin{align}\label{e1.30} 
&K(2^\nu, a; \bar{A})
\\ 
&\leq 2^{-\eta\gamma}
\int P_0(\eta, t)K(2^\nu, f(it);\bar{A})\, dt 
+ 2^{(1-\eta)\gamma}\int P_1(\eta, t)K(2^\nu, f(1+it);\bar{A})\, dt  \notag 
\\ 
&=2 \left(\int P_0(\eta, t)K(2^\nu, f(it);\bar{A})\, dt
\right)^{1-\eta}  \left(\int P_1(\eta, t)K(2^\nu, f(1+it);\bar{A})
\, dt\right)^{\eta},                  \notag 
\end{align} 
if $\gamma$ is chosen to satisfy 
$$2^\gamma=\left(\int P_0(\eta, t)K(2^\nu, f(it);\bar{A})\, dt\right)
\left(\int P_1(\eta, t)K(2^\nu, f(1+it);\bar{A})\, dt\right)^{-1}.       $$
Putting 
\begin{gather*}
C_\nu=2^{-\nu\theta}\int P_0(\eta, t)K(2^\nu, f(it);\bar{A})\, dt, 
\\ 
D_\nu= 2^{-\nu\theta}\int P_1(\eta, t)K(2^\nu, f(1+it);\bar{A})\, dt, 
\end{gather*} 
by Lemma 3.1.3 of \cite{BL}, 
 \eqref{e1.30}, H\"{o}lder's inequality and Minkowski's inequality, we have
\begin{align*} 
\|a\|_{\bar{A}_{\theta, q}}
&\leq 
C\left\|\left(K(2^\nu,a;\bar{A}) \right)_\nu\right\|_{\ell_{\theta}^q} 
\\ 
&\leq C\left(\sum_\nu C_\nu^{q_0}\right)^{(1-\eta)/q_0}
\left(\sum_\nu D_\nu^{q_1}\right)^{\eta/q_1} 
\\ 
&\leq C\left(\int P_0(\eta, t)\left\|\left(K(2^\nu,f(it);\bar{A}) 
\right)_\nu\right\|_{\ell_{\theta}^{q_0}} dt\right)^{1-\eta} 
\\ 
&\quad \quad \times 
\left(\int P_1(\eta, t)\left\|\left(K(2^\nu,f(1+it);\bar{A}) \right)_\nu
\right\|_{\ell_{\theta}^{q_1}} dt\right)^\eta 
\\ 
&\leq C\left(\int P_0(\eta, t)\left\|f(it)\right\|_{\bar{A}_{\theta, q_0}} dt
\right)^{1-\eta} 
\left(\int P_1(\eta, t)\left\|f(1+it)\right\|_{\bar{A}_{\theta, q_1}} dt
\right)^\eta
\\ 
&\leq C\|f\|_{\mathscr F(\bar{A}_{\theta, q_0}, \bar{A}_{\theta, q_1})}. 
\end{align*} 
This implies $\|a\|_{\bar{A}_{\theta, q}}\leq C\|a\|_{\bar{X}_{[\eta]}}$, 
completing the proof of \eqref{e1.24}.

\section{Invariant measures on homogeneous manifolds}\label{j8intro}  

Let $H$ be a closed Lie subgroup of a Lie Group $G$. We assume that $G$ 
has a countable base of open sets 
(see \cite[p. 6, p. 81]{Bo}, \cite[Chap. IV, \S 6]{M})).  
We consider the homogeneous space (the quotient manifold) 
$G/H$ (see \cite[Chap. IV]{Bo},  
\cite[Chap. IV]{M}). 
Let $\frak{g}=L(G)$ and $\frak{h}=L(H)$ be the Lie algebras of $G$ and $H$, 
respectively. We assume that $1\leq \dim L(H)< \dim L(G)$.    
Let $Ad(g)=Ad_G(g)\in GL(L(G))$ (the general linear group on $L(G)$), 
$g\in G$, be the adjoint 
representation of $G$ in $GL(L(G))$ and $Ad(h)|L(H)= Ad_H(h)\in GL(L(H))$ that 
of $H$ in $GL(L(H))$ with $h\in H$.  
For $k\in G$
$$\tau(k): G/H\to G/H$$ 
 is defined by $\tau(k)(gH)=kgH$.   
We prove the following. 
\begin{proposition}\label{j8p.1.0}
Suppose that 
$$\left|\det Ad_{G}(h)\right|= \left|\det Ad_{H}(h)\right| \quad 
\text{for all $h\in H$.}$$  
Then, there exists a $\tau(g)$-invariant Borel measure $\mu$ on 
$G/H$ $(g\in G);$ by the $\tau(g)$-invariance we mean that 
$$\mu(\tau(g)A)=\mu(A) $$ 
for every $g\in G$ and every Borel set $A$ in $G/H$. 
\end{proposition}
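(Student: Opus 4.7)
The plan is to realize $\mu$ via the classical Weil descent, constructing it as the quotient of left Haar measures on $G$ and on $H$; the hypothesis is precisely the compatibility condition between the modular functions of $G$ and $H$ that makes such a descent well-defined.

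First, I would fix left Haar measures $dg$ on $G$ and $dh$ on $H$ (both exist by general Lie-group theory applied to the second-countable groups $G$ and $H$), and let $\Delta_G$ and $\Delta_H$ denote the associated modular functions. The standard identification of the modular function with the adjoint determinant gives $\Delta_G(g) = |\det Ad_G(g)|^{-1}$ and $\Delta_H(h) = |\det Ad_H(h)|^{-1}$, which one proves by constructing each Haar measure from a left-invariant top-degree form on the Lie algebra and observing that right translation by $g$ acts on such a form at the identity through $Ad(g)$. The hypothesis thus translates into the modular identity $\Delta_G(h) = \Delta_H(h)$ for every $h \in H$.

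Next I would introduce the averaging operator $P \colon C_c(G) \to C_c(G/H)$,
$$(Pf)(gH) = \int_H f(gh)\, dh,$$
and verify, by a standard partition-of-unity argument, that $Pf \in C_c(G/H)$ and that $P$ is surjective. The key technical point is to show that the linear functional $f \mapsto \int_G f\, dg$ on $C_c(G)$ descends through $P$; equivalently, that $Pf \equiv 0$ forces $\int_G f\, dg = 0$. To prove this, pick $\varphi \in C_c(G)$ with $(P\varphi)(gH) \equiv 1$ on a neighbourhood of the projection of $\supp f$, so that $\int_G f\, dg = \int_G \int_H f(g)\varphi(gh)\, dh\, dg$. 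Apply Fubini, substitute $g \mapsto uh^{-1}$ in the inner $G$-integral (producing the Jacobian $\Delta_G(h)^{-1}$), and use the inversion formula on $H$ (producing $\Delta_H(h)^{-1}$); the hypothesis $\Delta_G|_H = \Delta_H$ cancels these Jacobians and reduces the double integral to $\int_G \varphi(u)(Pf)(uH)\, du$, which vanishes. Thus $\Lambda(Pf) := \int_G f\, dg$ is a well-defined positive linear functional on $C_c(G/H)$, and Riesz representation produces the desired Borel measure $\mu$.

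Finally, left-invariance of $dg$ combined with the intertwining identity $P(L_k f) = (Pf) \circ \tau(k^{-1})$, where $(L_k f)(x) = f(k^{-1}x)$, yields $\tau(k)$-invariance of $\mu$: for any $f \in C_c(G)$ one has $\int_{G/H}(Pf)\circ \tau(k^{-1})\, d\mu = \int_G f(k^{-1}g)\, dg = \int_G f(g)\, dg = \int_{G/H}(Pf)\, d\mu$. The step I expect to be the genuine obstacle is the descent of the functional: the modular identification $\Delta_G = |\det Ad_G|^{-1}$ requires some care in sign and inversion conventions, and the double-integral change-of-variables manipulation must carefully balance the Jacobians coming from the $G$- and $H$-sides. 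The remaining steps (surjectivity of $P$, Riesz representation, and the final invariance verification) are standard.
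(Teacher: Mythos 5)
Your proposal is correct, but it follows a genuinely different path from the paper's. The paper constructs $\mu$ intrinsically on $G/H$: it fixes an alternating $m$-form $\omega_o$ at the base point $o=eH$, transports it over each chart $\tau(k)W$ via the maps $\varphi^{(k,p)}$ to get local densities $|\omega_p^{(k)}|$, shows (Lemma \ref{j8inv.l4}) that on chart overlaps these densities differ by the factor $|\det((d\tau(h))_o)|$ together with the change-of-variables Jacobian, and then proves (Lemma \ref{j8inv.l7}) the key identity $\det((d\tau(h))_o)=\det Ad_G(h)/\det Ad_H(h)$, so the hypothesis makes the patchwork of densities consistent; the measure is then assembled by hand on compact sets and extended by a supremum, with invariance checked directly. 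You instead work upstairs: take left Haar measures on $G$ and $H$, translate the hypothesis into $\Delta_G|_H=\Delta_H$ via $\Delta(g)=|\det Ad(g)|^{-1}$, and push the Haar integral down through the averaging operator $P:C_c(G)\to C_c(G/H)$, using the Weil descent computation and Riesz representation. Both routes hinge on the same determinant identity, but the paper's is self-contained and elementary (no appeal to the existence of Haar measure or to Riesz representation, and it produces an explicit smooth local density for $\mu$), while yours is shorter and more conceptual, at the cost of invoking those two theorems. Note that the existence of Haar measure on a Lie group is usually itself proved by the ``left-invariant top form'' construction, which is essentially what the paper does on $G/H$, so one could view the paper's proof as inlining that construction; and you should be explicit that the well-definedness of $\Lambda(Pf):=\int_G f\,dg$ and the surjectivity of $P$ require the usual local-section/partition-of-unity argument, which is where the geometry of the principal bundle $G\to G/H$ actually enters.
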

\par 
Let $(d\tau(h))_o$, $h\in H$, be an element of 
the linear isotropy group  of $G$ at $o=\pi(e)$, 
where $\pi : G \to G/H$ is the canonical mapping, $e$ is the identity element 
 and $$(d\tau(h))_o: T_o(G/H) \to T_o(G/H)$$ 
is the differential of $\tau(h)$ at $o$ 
with $T_o(G/H)$ denoting the tangent vector space at $o$. 
Then, under the assumption of Proposition \ref{j8p.1.0} we have 
$|\det(d\tau(h))_o|=1$ for all $h\in H$ (see Lemma \ref{j8inv.l7} below). 
We can find relevant results in \cite[Chap. 6]{KO}.  

\section{Some results for the proof of 
Proposition \ref{j8p.1.0}}\label{j8Some} 
To prove Proposition \ref{j8p.1.0} we need some preliminaries.  
Let $L(G)=\mathcal M + L(H)$ be a direct sum of vector spaces, i.e., 
$\mathcal M \cap L(H)=\{0\}$. Let $m=\dim \mathcal M$.   
 There exists an open set $U \subset \mathcal M$ 
such that $0\in U$ and,  if $V=\exp(U)$, then 
$$\pi|V : V\to G/H$$ 
is a diffeomorphism from the submanifold $V$ onto $W=\pi(V)$ 
(see \cite[Chap. IV]{M}, \cite[Chap. 3]{S}, \cite[Chap. 2, \S 4]{Hel}).   
Let $p\in \tau(k)W$, $p=kvH, v\in V$, where $v$ is uniquely determined 
by $p$ and $k$: $v=(\pi|V)^{-1}\circ \tau(k^{-1})p$. We define 
$$\varphi^{(k,p)} : \tau(k)W \to W_o=
\{\tau(v^{-1}v_0)(o): v_0\in V\}, \quad o=eH=H,  $$ 
by 
$$\varphi^{(k,p)}(r)=\tau(v^{-1}k^{-1})(r) = 
v^{-1}k^{-1}(kv_0)H=v^{-1}v_0H, $$
if $r=kv_0H \in \tau(k)W$ $(v_0\in V)$. 
We note that $\varphi^{(k,p)}(p)=eH=o$. 
\par 
Take a non-zero alternating $m$ form $\omega_o$ on $T_o(G/H)$. Define 
\begin{equation}\label{e11.1} 
\omega^{(k)}_p =\omega_o\circ \left(d\varphi^{(k,p)}\right)_p^\otimes,    
\end{equation} 
where $\left(d\varphi^{(k,p)}\right)_p^\otimes$ is defined by 
\begin{multline}\label{e11.2} 
\left(d\varphi^{(k,p)}\right)_p^\otimes (X_p^{(1)}), \dots, X_p^{(m)}) 
\\ 
=\left(\left(d\varphi^{(k,p)}\right)_p(X_p^{(1)}), \dots, 
\left(d\varphi^{(k,p)}\right)_p(X_p^{(m)}) \right)  
\end{multline}  
for $X^{(j)}_p \in T_p(G/H)$, $1\leq j \leq m$.  
A similar notation will be used in what follows. 
\par 
Let $F_k: \tau(k)W \to U$ be defined by 
$$F_k(q)= (\pi\circ\exp)^{-1}\tau(k^{-1})(q),   $$ 
where we simply write $\exp$ for $\exp|U$.    
Then we can choose $U$ so that 
$$\left\{(\tau(k)W, F_k): k\in G \right\} $$ 
is a $C^\infty$ coordinate system of $G/H$ (see \cite[Chap. 3]{S}). 
Let $F_k(r)=(x_1(r), \dots , x_m(r))$ be the local coordinates expression of 
$F_k$.  We consider, near $(x_1(p), \dots , x_m(p))$, 
$$F_e\circ \varphi^{(k,p)}\circ F_k^{-1}(x_1, \dots , x_m)= 
\left(\varphi_1^{(k,p)}(x_1, \dots, x_m), \dots , \varphi_m^{(k,p)}(x_1, \dots, x_m) \right). $$  
Then we have the following.  

\begin{proposition}\label{j8prop.1.1} 
For $p\in \tau(k)W$, $k\in G$, we have 
\begin{equation*}
\omega^{(k)}_p =J\left(\varphi_1^{(k,p)}, \dots , \varphi_m^{(k,p)}\right)
(F_k(p))\omega_o\left(\left(\frac{\partial}{\partial y_1}\right)_o, \dots, 
\left(\frac{\partial}{\partial y_m}\right)_o \right)
(dx_1)_p\wedge \dots \wedge (dx_m)_p,  
\end{equation*} 
where $J(p)
=J\left(\varphi_1^{(k,p)}, \dots , \varphi_m^{(k,p)}\right)(F_k(p))$ 
is the Jacobian$:$ 
    
\begin{equation*}
J(p)=
\det\begin{pmatrix} 
\frac{\partial\varphi_1^{(k,p)}}{\partial x_1}(F_k(p)) & \dots       &  
\frac{\partial\varphi_1^{(k,p)}}{\partial x_m}(F_k(p)) \\
\hdotsfor{3}      \\ 
\hdotsfor{3}      \\ 
\frac{\partial\varphi_m^{(k,p)}}{\partial x_1}(F_k(p)) & \dots  &    
\frac{\partial\varphi_m^{(k,p)}}{\partial x_m}(F_k(p))
\end{pmatrix}, 
\end{equation*}   
and $(y_1, \dots , y_m)$ is the local coordinates defined by $F_e(r)=
(y_1(r), \dots, y_m(r))$, $r\in W$. 
$($\!We note that $J(p)$ is $C^\infty$ on $\tau(k)W$.$)$
\end{proposition}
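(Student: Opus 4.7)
The strategy is to evaluate both sides of the claimed identity on the coordinate frame $(\partial/\partial x_1)_p,\dots,(\partial/\partial x_m)_p$ on $T_p(G/H)$ induced by the chart $F_k$. Because both sides are alternating $m$-forms on the $m$-dimensional space $T_p(G/H)$, verifying the equality on this single ordered $m$-tuple is enough to prove the identity.

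First I would unfold the definitions in \eqref{e11.1} and \eqref{e11.2} to obtain
$$
\omega^{(k)}_p\bigl((\partial/\partial x_1)_p,\dots,(\partial/\partial x_m)_p\bigr)
=\omega_o\bigl((d\varphi^{(k,p)})_p(\partial/\partial x_1)_p,\dots,(d\varphi^{(k,p)})_p(\partial/\partial x_m)_p\bigr).
$$
Since $\varphi^{(k,p)}(p)=o$, the coordinate representation $F_e\circ\varphi^{(k,p)}\circ F_k^{-1}$, whose components are the $\varphi_j^{(k,p)}$, is smooth near $F_k(p)$, and the chain rule gives
$$
(d\varphi^{(k,p)})_p(\partial/\partial x_i)_p
=\sum_{j=1}^m \frac{\partial \varphi_j^{(k,p)}}{\partial x_i}(F_k(p))\,(\partial/\partial y_j)_o
\qquad (i=1,\dots,m).
$$

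Next, I would substitute these vectors into $\omega_o$ and expand by multilinearity. The alternating property reduces the resulting $m^m$ terms to a Leibniz-type sum over permutations, producing
$$
J(p)\cdot\omega_o\bigl((\partial/\partial y_1)_o,\dots,(\partial/\partial y_m)_o\bigr),
$$
where $J(p)$ is precisely the Jacobian displayed in the statement of the proposition. On the right-hand side of the claimed formula, the factor $(dx_1)_p\wedge\dots\wedge(dx_m)_p$ evaluates to $1$ on the ordered frame $\{(\partial/\partial x_i)_p\}$. Hence both sides agree at this single frame and therefore as alternating $m$-forms at $p$. Smoothness of $J(p)$ on $\tau(k)W$ is then immediate from smoothness of $\varphi^{(k,p)}$ together with smoothness of the charts $F_k$ and $F_e$.

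The only delicate point is bookkeeping in the chain-rule step, in particular checking that the row/column indexing of the Jacobian matches the convention in the statement (the $i$-th partial derivatives of the $j$-th component versus its transpose). No substantive obstacle is expected, since the identity is essentially the standard coordinate formula for the pullback of a top-degree differential form along a local diffeomorphism.
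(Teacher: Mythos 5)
Your proof is correct and follows essentially the same approach as the paper: use the chain rule to express $(d\varphi^{(k,p)})_p(\partial/\partial x_i)_p$ in the $(\partial/\partial y_j)_o$ frame and then expand $\omega_o$ by multilinearity to extract the Jacobian determinant. The only difference is a minor streamlining: you evaluate both sides on the single coordinate frame $\{(\partial/\partial x_i)_p\}$ and invoke the fact that an alternating $m$-form on an $m$-dimensional space is determined by its value on one ordered basis, whereas the paper carries out the computation for arbitrary tangent vectors $X_p^{(1)},\dots,X_p^{(m)}$ and factors out the common determinant $\det(a_i^{(j)}(p))$ at the end.
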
  

\begin{proof} 
We observe that 
$$\left(d\varphi^{(k,p)}\right)_p(X_p)f
=X_p\left(f\left(\varphi^{(k,p)}\right) \right)= \sum_{i=1}^m a_i(p)
\left(\frac{\partial}{\partial x_i}\right)_pf\left(\varphi^{(k,p)}\right), 
$$ 
where $X_p \in T_p(G/H)$ with the expression 
$$X_p=  \sum_{i=1}^m a_i(p)\left(\frac{\partial}{\partial x_i}\right)_p.  $$  
We note that 
\begin{align*} 
\left(\frac{\partial}{\partial x_i}\right)_pf\left(\varphi^{(k,p)}\right)&= 
 \left[\left(\frac{\partial}{\partial x_i}(f\circ \varphi^{(k,p)}\circ F_k^{-1})  \right)\circ F_k \right](p) 
\\ 
&= \left[\left(\frac{\partial}{\partial x_i}\left((f\circ F_e^{-1})\circ(F_e    \circ \varphi^{(k,p)}\circ F_k^{-1})\right)\right)\circ F_k \right](p). 
\end{align*}
Thus by the chain rule we see that 
\begin{equation*} 
\left(\frac{\partial}{\partial x_i}\right)_pf\left(\varphi^{(k,p)}\right)= 
\sum_{l=1}^m (\frac{\partial}{\partial y_l} (f\circ F_e^{-1}))
(F_e\circ \varphi^{(k,p)})(p) 
\frac{\partial\varphi_l^{(k,p)}}{\partial x_i}(F_k(p)).  
\end{equation*}  
So we have 
\begin{align*}  
\left(d\varphi^{(k,p)}\right)_p(X_p)f 
&= \sum_{i=1}^m a_i(p) \sum_{l=1}^m \left(\frac{\partial}{\partial y_l} 
(f\circ F_e^{-1})\right)(F_e\circ \varphi^{(k,p)})(p) 
\frac{\partial\varphi_l^{(k,p)}}{\partial x_i}(F_k(p))  
\\ 
&= \sum_{l=1}^m \left(\sum_{i=1}^m a_i(p)  
\frac{\partial\varphi_l^{(k,p)}}{\partial x_i}(F_k(p))\right)
\left(\frac{\partial}{\partial y_l}\right)_{\varphi^{(k,p)}(p)}f
\\ 
\\ 
&= \sum_{l=1}^m \left(\sum_{i=1}^m a_i(p)  
\frac{\partial\varphi_l^{(k,p)}}{\partial x_i}(F_k(p))\right) 
\left(\frac{\partial}{\partial y_l}\right)_{o}f.  
\end{align*}
Therefore 
$$\left(d\varphi^{(k,p)}\right)_p(X_p)= 
 \sum_{l=1}^m \left(\sum_{i=1}^m a_i(p)  
\frac{\partial\varphi_l^{(k,p)}}{\partial x_i}(F_k(p))\right) 
\left(\frac{\partial}{\partial y_l}\right)_{o}. $$  
\par  
Let 
$$X_p^{(j)}=  \sum_{i_j=1}^m a_{i_j}^{(j)}(p)
\left(\frac{\partial}{\partial x_{i_j}}\right)_p,   $$  
$1\leq j\leq m$, be in $T_p(G/H)$.  
Then 
\begin{align*} 
&\omega_o\left(\left(d\varphi^{(k,p)}\right)_p(X_p^{(1)}), \dots, 
\left(d\varphi^{(k,p)}\right)_p(X_p^{(m)}) \right)
\\  
&= \sum_{l_1=1}^m \sum_{i_1=1}^m \dots \sum_{l_m=1}^m \sum_{i_m=1}^m 
a_{i_1}^{(1)}(p)\dots a_{i_m}^{(m)}(p)
\frac{\partial\varphi_{l_1}^{(k,p)}}{\partial x_{i_1}}(F_k(p))\dots 
\frac{\partial\varphi_{l_m}^{(k,p)}}{\partial x_{i_m}}(F_k(p)) 
\\ 
&\qquad \times \omega_o\left(\left(\frac{\partial}{\partial y_{l_1}}
\right)_{o},\dots, \left(\frac{\partial}{\partial y_{l_m}}\right)_{o} \right)
\\ 
&= \sum_{i_1=1}^m \dots\sum_{i_m=1}^m a_{i_1}^{(1)}(p)\dots a_{i_m}^{(m)}(p)
\det\begin{pmatrix} 
\frac{\partial\varphi_1^{(k,p)}}{\partial x_{i_1}}(F_k(p)) & \dots       &  
\frac{\partial\varphi_1^{(k,p)}}{\partial x_{i_m}}(F_k(p)) \\
\hdotsfor{3}      \\ 
\hdotsfor{3}      \\ 
\frac{\partial\varphi_m^{(k,p)}}{\partial x_{i_1}}(F_k(p)) & \dots  &    
\frac{\partial\varphi_m^{(k,p)}}{\partial x_{i_m}}(F_k(p))
\end{pmatrix} 
\\ 
&\qquad \times \omega_o\left(\left(\frac{\partial}{\partial y_1}\right)_o, 
\dots, \left(\frac{\partial}{\partial y_m}\right)_o \right) 
\\ 
&=\det\begin{pmatrix} 
a_1^{(1)}(p) & \dots       &   a_1^{(m)}(p)\\
\hdotsfor{3}      \\ 
\hdotsfor{3}      \\ 
a_m^{(1)}(p) & \dots  &   a_m^{(m)}(p)
\end{pmatrix} 
J\left(\varphi_1^{(k,p)}, \dots , \varphi_m^{(k,p)}\right)(F_k(p))
\\ 
&\qquad\qquad\qquad\qquad\qquad\qquad\qquad\qquad\qquad \times 
\omega_o\left(\left(\frac{\partial}{\partial y_1}\right)_o, 
\dots, \left(\frac{\partial}{\partial y_m}\right)_o \right).   
\end{align*}
On the other hand, we also see that 
\begin{align*}
&\left((dx_1)_p\wedge \dots \wedge (dx_m)_p\right)
\left(X_p^{(1)}, \dots, X_p^{(m)}\right)
\\ 
&=\det\begin{pmatrix} 
a_1^{(1)}(p) & \dots       &   a_1^{(m)}(p)\\
\hdotsfor{3}      \\ 
\hdotsfor{3}      \\ 
a_m^{(1)}(p) & \dots  &   a_m^{(m)}(p)
\end{pmatrix} 
\left((dx_1)_p\wedge \dots \wedge (dx_m)_p\right)
\left(\left(\frac{\partial}{\partial x_1}\right)_p, 
\dots, \left(\frac{\partial}{\partial x_m}\right)_p \right)
\\ 
&=\det\begin{pmatrix} 
a_1^{(1)}(p) & \dots       &   a_1^{(m)}(p)\\
\hdotsfor{3}      \\ 
\hdotsfor{3}      \\ 
a_m^{(1)}(p) & \dots  &   a_m^{(m)}(p)
\end{pmatrix}.  
\end{align*}
Thus we have 
\begin{align*} 
&\omega_o\circ \left(d\varphi^{(k,p)}\right)_p^\otimes\left(X_p^{(1)}, \dots, 
X_p^{(m)}\right)  
\\
&=\omega_o\left(\left(d\varphi^{(k,p)}\right)_p(X_p^{(1)}), \dots, 
\left(d\varphi^{(k,p)}\right)_p(X_p^{(m)}) \right)
\\  
&=J\left(\varphi_1^{(k,p)}, \dots , \varphi_m^{(k,p)}\right)(F_k(p))
\\ 
&\qquad \times \omega_o\left(\left(\frac{\partial}{\partial y_1}\right)_o, 
\dots, \left(\frac{\partial}{\partial y_m}\right)_o \right)
\left((dx_1)_p\wedge \dots \wedge (dx_m)_p\right)
\left(X_p^{(1)}, \dots, X_p^{(m)}\right) 
\end{align*} 
for all  $X^{(j)}_p $, $1\leq j\leq m$.    
This completes the proof of Proposition \ref{j8prop.1.1}. 
\end{proof} 
\par 
We write 
$$\omega_p^{(k)}= h^{(k)}(p)\left((dx_1)_p\wedge \dots \wedge (dx_m)_p\right), 
\quad p\in \tau(k)W.   $$  
\par 
Suppose that $p \in (\tau(k_1)W)\cap (\tau(k_2)W)$, $p=k_1v_1H$, $p=k_2v_2H$, 
$k_1, k_2 \in G$, $v_1, v_2\in V$.   
Let $g_1=k_1v_1$,  $g_2=k_2v_2$.   Then  $g_2^{-1}g_1\in H$ and 
for  $r \in (\tau(k_1)W)\cap (\tau(k_2)W)$ we have 
$$\tau(g_2^{-1}g_1)\circ  \varphi^{(k_1,p)}(r) = \varphi^{(k_2,p)}(r). $$
This is true  since 
$$\tau(g_2^{-1}g_1)\circ  \varphi^{(k_1,p)}(r)= 
\tau(g_2^{-1}g_1)(\tau(g_1^{-1})(r))=\tau(g_2^{-1})(r)= \varphi^{(k_2,p)}(r). $$From this we have the following.  

\begin{lemma}\label{j8inv.l1} 
Let $p, g_1, g_2$ be as above and 
$h=g_2^{-1}g_1 \in H$. Then 
$$(d\varphi^{(k_2,p)})_p=(d\tau(h))_o\circ(d\varphi^{(k_1,p)})_p. $$ 
\end{lemma}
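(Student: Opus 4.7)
The plan is to deduce the claimed equality of tangent maps purely from the already-established equation of smooth maps
\[
\varphi^{(k_2,p)} \;=\; \tau(h)\circ \varphi^{(k_1,p)}
\]
on the open set $(\tau(k_1)W)\cap(\tau(k_2)W)$, where $h=g_2^{-1}g_1\in H$. Since both sides are $C^\infty$ maps from a neighborhood of $p$ in $G/H$ into $G/H$, applying the chain rule for differentials of smooth manifold maps at the point $p$ yields
\[
\bigl(d\varphi^{(k_2,p)}\bigr)_p \;=\; \bigl(d\tau(h)\bigr)_{\varphi^{(k_1,p)}(p)} \circ \bigl(d\varphi^{(k_1,p)}\bigr)_p.
\]

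To finish, I would verify that $\varphi^{(k_1,p)}(p)=o$. This is immediate from the definition of $\varphi^{(k,p)}$: writing $p=k_1v_1H$ with $v_1\in V$, the definition gives $\varphi^{(k_1,p)}(p)=v_1^{-1}k_1^{-1}\cdot k_1 v_1 H = eH = o$. Substituting this base-point identification into the chain-rule formula gives
\[
\bigl(d\varphi^{(k_2,p)}\bigr)_p \;=\; \bigl(d\tau(h)\bigr)_{o} \circ \bigl(d\varphi^{(k_1,p)}\bigr)_p,
\]
which is the asserted identity.

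There is essentially no main obstacle here: the proof is a one-line application of the chain rule once the map-level identity $\varphi^{(k_2,p)}=\tau(h)\circ \varphi^{(k_1,p)}$ has been recorded (which the paper does in the paragraph immediately preceding the lemma). The only small subtlety worth stating explicitly is checking that $\tau(h)$ is a well-defined diffeomorphism of $G/H$ (so that $(d\tau(h))_o$ makes sense), which is standard since the $G$-action on $G/H$ is smooth and each $\tau(g)$ has smooth inverse $\tau(g^{-1})$, and that $h\in H$ so that $\tau(h)$ indeed fixes $o=eH$, ensuring $(d\tau(h))_o$ is an endomorphism of $T_o(G/H)$ (this is also the reason the right-hand side composition is formally well-typed).
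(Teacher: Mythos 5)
Your proof is correct and is exactly the argument the paper intends: the paper records the map-level identity $\tau(g_2^{-1}g_1)\circ\varphi^{(k_1,p)}=\varphi^{(k_2,p)}$ immediately before the lemma and states the conclusion without comment, leaving the chain-rule step and the base-point check $\varphi^{(k_1,p)}(p)=o$ (which the paper had already noted when defining $\varphi^{(k,p)}$) implicit. You have simply made those two steps explicit.
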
  
 
Lemma \ref{j8inv.l1} implies 
\begin{multline*} 
\omega_o\left(\left(d\varphi^{(k_2,p)}\right)_p(X_p^{(1)}), \dots, 
\left(d\varphi^{(k_2,p)}\right)_p(X_p^{(m)}) \right)
\\ 
=\det((d\tau(h))_o)
\omega_o\left(\left(d\varphi^{(k_1,p)}\right)_p(X_p^{(1)}), \dots, 
\left(d\varphi^{(k_1,p)}\right)_p(X_p^{(m)}) \right) 
\end{multline*} 
for  $p \in (\tau(k_1)W)\cap (\tau(k_2)W)$, $h=g_2^{-1}g_1$, 
$g_1=k_1(\pi|V)^{-1}\tau(k_1^{-1})p$, $g_2=k_2(\pi|V)^{-1}\tau(k_2^{-1})p$, 
where  
$\det((d\tau(h))_o)$ is the determinant of an $m\times m$ matrix expressing the linear transformation $(d\tau(h))_o$ on $T_o(G/H)$.  
\par 
Thus we have the following.  

\begin{lemma}\label{j8inv.l2} 
Let $F_{k_2}(r)=(y_1(r), \dots, y_m(r))$, $F_{k_1}(r)=(x_1(r), \dots, x_m(r))$. Then for $p \in (\tau(k_1)W)\cap (\tau(k_2)W)$ we have 
$$h^{(k_2)}(p)\left((dy_1)_p\wedge \dots \wedge (dy_m)_p\right) =
\det((d\tau(h))_o)h^{(k_1)}(p)\left((dx_1)_p\wedge \dots 
\wedge (dx_m)_p\right).
$$
\end{lemma}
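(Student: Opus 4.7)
The plan is to combine Proposition \ref{j8prop.1.1}, applied separately in the two charts $(\tau(k_1)W, F_{k_1})$ and $(\tau(k_2)W, F_{k_2})$, with the transition identity obtained from Lemma \ref{j8inv.l1}. Since $h^{(k)}(p)$ is defined by the equation $\omega_p^{(k)} = h^{(k)}(p)\left((dx_1)_p\wedge \dots \wedge (dx_m)_p\right)$ in the $F_k$-coordinates, the entire claim reduces to comparing $\omega_p^{(k_1)}$ and $\omega_p^{(k_2)}$ on $(\tau(k_1)W)\cap(\tau(k_2)W)$.

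First, I would write out the two explicit expressions. Proposition \ref{j8prop.1.1} in the chart $F_{k_1}$ gives $\omega_p^{(k_1)} = h^{(k_1)}(p)\left((dx_1)_p\wedge \dots \wedge (dx_m)_p\right)$, and the analogous computation in the chart $F_{k_2}$ gives $\omega_p^{(k_2)} = h^{(k_2)}(p)\left((dy_1)_p\wedge \dots \wedge (dy_m)_p\right)$. Both equalities hold on the intersection, since the formulas are derived coordinate-by-coordinate from the definition \eqref{e11.1}.

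Next I would establish the intrinsic relation $\omega_p^{(k_2)} = \det\!\left((d\tau(h))_o\right)\omega_p^{(k_1)}$ directly from Lemma \ref{j8inv.l1}. For any $m$-tuple $X_p^{(1)}, \dots, X_p^{(m)}\in T_p(G/H)$, the composition identity $(d\varphi^{(k_2,p)})_p = (d\tau(h))_o\circ (d\varphi^{(k_1,p)})_p$ yields
\begin{align*}
\omega_o\!\left((d\varphi^{(k_2,p)})_p X_p^{(1)},\dots,(d\varphi^{(k_2,p)})_p X_p^{(m)}\right)
&= \omega_o\!\left((d\tau(h))_o Y^{(1)},\dots,(d\tau(h))_o Y^{(m)}\right) \\
&= \det\!\left((d\tau(h))_o\right)\,\omega_o\!\left(Y^{(1)},\dots,Y^{(m)}\right),
\end{align*}
where $Y^{(j)} = (d\varphi^{(k_1,p)})_p X_p^{(j)}\in T_o(G/H)$. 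The last equality is the standard property that an alternating $m$-form on an $m$-dimensional vector space transforms by the determinant under a linear endomorphism. Recalling the definition \eqref{e11.1} of $\omega_p^{(k)}$ via \eqref{e11.2}, this is precisely $\omega_p^{(k_2)} = \det\!\left((d\tau(h))_o\right)\omega_p^{(k_1)}$.

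Substituting the two coordinate expressions from the first step into this relation yields the desired identity. There is no serious obstacle: the argument is a short bookkeeping reduction using linear algebra for alternating top forms. The only points requiring modest care are verifying that both charts are indeed defined at $p$ (which is the hypothesis) and that the element $h = g_2^{-1}g_1$ lies in $H$, so that $(d\tau(h))_o$ acts on $T_o(G/H)$ and has a well-defined determinant.
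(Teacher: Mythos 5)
Your proposal is correct and follows essentially the same route as the paper: it uses Lemma \ref{j8inv.l1} together with the standard fact that an alternating top form transforms by the determinant under a linear endomorphism of $T_o(G/H)$, and then rewrites both sides via the coordinate expressions $\omega_p^{(k_i)} = h^{(k_i)}(p)$ times the corresponding top form. The only cosmetic difference is that you invoke Proposition \ref{j8prop.1.1} for the coordinate expressions, whereas the paper simply introduces $h^{(k)}(p)$ as notation immediately after that proposition; the substance of the argument is the same.
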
 

Let  $U_1=F_{k_1}((\tau(k_1)W)\cap (\tau(k_2)W))$, $U_2=F_{k_2}((\tau(k_1)W)\cap (\tau(k_2)W))$.  
We consider $\psi : U_2 \to U_1$ defined by $\psi= F_{k_1}\circ F_{k_2}^{-1}$. 
We write 
$$\psi(y_1, \dots, y_m)=(\psi^{(1)}(y_1, \dots, y_m), \dots , 
\psi^{(m)}(y_1, \dots, y_m)).  $$ 
Also, we write $J(\psi)(F_{k_2}(p))$ for the Jacobian 
$J(\psi^{(1)}, \dots, \psi^{(m)})(F_{k_2}(p))$. 
The following holds. 

\begin{lemma}\label{j8inv.l3} 
Let $p \in ((\tau(k_1)W)\cap (\tau(k_2)W))$. Then, with the notation of 
Lemma $\ref{j8inv.l2}$, we have  
$$ (dx_1)_p\wedge \dots \wedge (dx_m)_p =J(\psi)(F_{k_2}(p))
(dy_1)_p\wedge \dots \wedge (dy_m)_p. $$  
\end{lemma}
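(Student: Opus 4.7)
The plan is to reduce the claim to the standard transformation rule for a top-degree exterior form under a smooth change of local coordinates. Since $\psi = F_{k_1} \circ F_{k_2}^{-1}$ relates the two coordinate charts, we have $x_i = \psi^{(i)}(y_1,\dots,y_m)$ on the overlap $(\tau(k_1)W)\cap(\tau(k_2)W)$, so the entire argument is local and the only ingredients are the chain rule for $1$-forms and the multilinear-alternating nature of the wedge product.

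First, I would apply the chain rule to write
\begin{equation*}
(dx_i)_p = \sum_{j=1}^m \frac{\partial \psi^{(i)}}{\partial y_j}(F_{k_2}(p))\,(dy_j)_p, \qquad 1\leq i\leq m,
\end{equation*}
which follows because $x_i\circ F_{k_2}^{-1} = \psi^{(i)}$ as functions on $U_2$, so $d(x_i)_p$ pulled back along $F_{k_2}^{-1}$ is the ordinary differential of $\psi^{(i)}$ at $F_{k_2}(p)$.

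Next I would substitute these expressions into $(dx_1)_p \wedge \dots \wedge (dx_m)_p$ and expand using multilinearity of the wedge product. The alternating property collapses the resulting $m^m$ terms to a sum over permutations $\sigma$ of $\{1,\dots,m\}$:
\begin{equation*}
(dx_1)_p\wedge\dots\wedge(dx_m)_p = \sum_{\sigma} \sgn(\sigma)\,\prod_{i=1}^m \frac{\partial \psi^{(i)}}{\partial y_{\sigma(i)}}(F_{k_2}(p))\,(dy_1)_p\wedge\dots\wedge(dy_m)_p,
\end{equation*}
and the sum over $\sigma$ is precisely the determinant of the Jacobian matrix of $\psi=(\psi^{(1)},\dots,\psi^{(m)})$ at $F_{k_2}(p)$, i.e.\ $J(\psi)(F_{k_2}(p))$. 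This gives the asserted identity.

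There is essentially no obstacle here; the proof is pure bookkeeping. The only point to be slightly careful with is the direction of the Jacobian: since $\psi = F_{k_1}\circ F_{k_2}^{-1}$ sends $y$-coordinates to $x$-coordinates, the partial derivatives $\partial \psi^{(i)}/\partial y_j$ evaluated at $F_{k_2}(p)$ are exactly the entries whose determinant is the factor appearing on the right-hand side, so the formula comes out with $J(\psi)(F_{k_2}(p))$ (and not its inverse). One could alternatively invoke the standard change-of-variables formula for top forms directly, but writing out the chain-rule/wedge computation keeps the proof self-contained and makes the identification of the factor with $J(\psi)(F_{k_2}(p))$ transparent.
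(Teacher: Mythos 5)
Your proof is correct and is the standard chain-rule/wedge-product computation for the transformation of a top-degree form under a change of local coordinates. The paper in fact states Lemma \ref{j8inv.l3} without proof, treating it as a routine consequence of the chain rule, so there is nothing in the paper to compare against beyond the fact that your argument is exactly the expected one; in particular you have the direction of the Jacobian right, since $\psi = F_{k_1}\circ F_{k_2}^{-1}$ expresses the $x$-coordinates as functions of the $y$-coordinates.
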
 
Lemmas \ref{j8inv.l2} and \ref{j8inv.l3} imply the following. 

\begin{lemma}\label{j8inv.l4} Using the notations of Lemmas $\ref{j8inv.l2}$  
and $\ref{j8inv.l3}$, we have 
$$h^{(k_2)}(p) =\det((d\tau(h))_o)h^{(k_1)}(p)J(\psi)(F_{k_2}(p))
$$
on $((\tau(k_1)W)\cap (\tau(k_2)W))$.  

\end{lemma}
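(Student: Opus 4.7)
The plan is straightforward: Lemma \ref{j8inv.l4} should follow by simply combining Lemmas \ref{j8inv.l2} and \ref{j8inv.l3} and cancelling the (nonvanishing) top-degree form $(dy_1)_p \wedge \dots \wedge (dy_m)_p$. No new geometry is needed; the content is purely the change-of-variables identity for coordinate representations of an $m$-form.

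First, I would fix $p \in (\tau(k_1)W) \cap (\tau(k_2)W)$ and let $h = g_2^{-1} g_1$ as in Lemma \ref{j8inv.l1}, so that Lemmas \ref{j8inv.l2} and \ref{j8inv.l3} both apply at $p$. From Lemma \ref{j8inv.l3},
\begin{equation*}
(dx_1)_p \wedge \dots \wedge (dx_m)_p = J(\psi)(F_{k_2}(p))\,(dy_1)_p \wedge \dots \wedge (dy_m)_p.
\end{equation*}
Substituting the right-hand side into the identity supplied by Lemma \ref{j8inv.l2} yields
\begin{equation*}
h^{(k_2)}(p)\,(dy_1)_p \wedge \dots \wedge (dy_m)_p = \det((d\tau(h))_o)\,h^{(k_1)}(p)\,J(\psi)(F_{k_2}(p))\,(dy_1)_p \wedge \dots \wedge (dy_m)_p.
\end{equation*}

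The last step would be to observe that $(dy_1)_p \wedge \dots \wedge (dy_m)_p$ is a basis element for the one-dimensional space $\Lambda^m T_p^*(G/H)$ (since $F_{k_2}$ is a coordinate chart near $p$, its differentials are linearly independent), hence nonzero. Dividing through by this $m$-covector gives the claimed scalar identity
\begin{equation*}
h^{(k_2)}(p) = \det((d\tau(h))_o)\,h^{(k_1)}(p)\,J(\psi)(F_{k_2}(p)).
\end{equation*}

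There is no real obstacle here; if anything, the only point worth a line of care is confirming that the cancellation of $(dy_1)_p \wedge \dots \wedge (dy_m)_p$ is legitimate, which reduces to the fact that $F_{k_2}$ is a diffeomorphism on a neighborhood of $p$ so its coordinate differentials form a basis of $T_p^*(G/H)$. Everything else is a direct substitution.
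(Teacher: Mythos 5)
Your proposal is correct and is exactly the argument the paper has in mind: the paper states Lemma \ref{j8inv.l4} with only the remark that Lemmas \ref{j8inv.l2} and \ref{j8inv.l3} imply it, and your substitution of Lemma \ref{j8inv.l3} into Lemma \ref{j8inv.l2} followed by cancelling the nonzero top-degree form $(dy_1)_p\wedge\cdots\wedge(dy_m)_p$ is precisely that implication, spelled out.
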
 

\section{Proof of Proposition $\ref{j8p.1.0}$}  \label{sec12}
We define a measure $\mu$ on $M=G/H$. 
Let $K$ be a compact set in $M$. Let $\frak M_K$ be the collection 
of the Borel sets of $M$ contained in $K$. 
We first define a measure $\mu_K$ on $\frak M_K$.  
We decompose $K=\cup_{j=1}^l E_j$, where $\{E_j\}$ is a 
family of mutually disjoint Borel sets in $\frak M_K$ such that 
$E_j\subset \tau(k_j)W$ for some $k_j\in G$. 
This is possible since $K$ is compact and hence we can select
 a finite subcovering from the covering $\{\tau(k)W\}_{k\in K}$ of $K$.  
 For $A \in \frak M_K$, define $A_j=A\cap E_j$, $1\leq j\leq l$, and 
\begin{equation}\label{j8e3.0}
\mu_K(A)=\sum_{j=1}^l \int_{A_j}|\omega_p^{(k_j)}|
=\sum_{j=1}^l \int_{A_j}\left| h^{(k_j)}(p)\right| \left|(dx_1)_p\wedge \dots 
\wedge (dx_m)_p\right|,  
\end{equation}
where  $F_{k_j}(p)=(x_1(p), \dots, x_m(p))$ and 
$$I:=\int_{A_j}\left| h^{(k_j)}(p)\right| \left|(dx_1)_p\wedge \dots 
\wedge (dx_m)_p\right|
=\int_{F_{k_j}(A_j)}\left| h^{(k_j)}(F_{k_j}^{-1}(x))\right| \, dx_1 \dots 
dx_m. $$ 
\par 
We see that $\mu_K$ is well-defined. Let 
$K=\cup_{u=1}^r D_u$, where $\{D_u\}$ is another family of mutually disjoint 
Borel sets in $K$ such that $D_u\subset \tau(s_u)W$ for some $s_u\in G$. 
Then  
\begin{equation*}
I=\sum_{u=1}^r 
\int_{F_{k_j}(A_j\cap D_u)}\left| h^{(k_j)}(F_{k_j}^{-1}(x))\right| \, dx_1 
\dots dx_m
\end{equation*}
for each $j$.  By change of variables $x=F_{k_j}\circ F_{s_u}^{-1}(y)$, writing $\psi=F_{k_j}\circ F_{s_u}^{-1}$,  we see that 
\begin{align*}
I&=\sum_{u=1}^r 
\int_{F_{s_u}(A_j\cap D_u)}\left| h^{(k_j)}(F_{s_u}^{-1}(y))\right| 
|J(\psi)(y)|  \, dy_1 \dots dy_m 
\\ 
&=\sum_{u=1}^r 
\int_{F_{s_u}(A_j\cap D_u)}\left| h^{(s_u)}(F_{s_u}^{-1}(y))\right| 
 \, dy_1 \dots dy_m, 
\end{align*}
where the last equality follows from Lemma \ref{j8inv.l4} and the fact  
$|\det((d\tau(h))_o)|=1$, which follows from Lemma \ref{j8inv.l7} below and 
 the assumption of the proposition.  
Thus 
\begin{align*}
&\sum_{j=1}^l \int_{A_j}\left| h^{(k_j)}(p)\right| \left|(dx_1)_p\wedge \dots 
\wedge (dx_m)_p\right|
= \sum_{u=1}^r\sum_{j=1}^l \int_{F_{s_u}(A_j\cap D_u)}
\left| h^{(s_u)}(F_{s_u}^{-1}(y))\right| \, dy_1 \dots dy_m  
\\ 
&= \sum_{u=1}^r \int_{F_{s_u}(A\cap D_u)}
\left| h^{(s_u)}(F_{s_u}^{-1}(y))\right| \, dy_1 \dots dy_m  
\\ 
&= \sum_{u=1}^r \int_{A\cap D_u}
\left| h^{(s_u)}(p)\right| \, 
\left|(dy_1)_p\wedge \dots \wedge (dy_m)_p\right|.   
\end{align*} 
This implies that $\mu_K$ is well-defined.  
\par 
We see the countable additivity of 
$\mu_K$. 
Let $A=\cup_{a=1}^\infty B_a$ 
be a union of disjoint sets $B_a$ in  $\frak M_K$.  
By the definition \eqref{j8e3.0} and the countable additivity 
of the Lebesgue measure,   we have 
\begin{align*} 
\mu_K(A)&=\sum_{j=1}^l \int_{A_j}|\omega_p^{(k_j)}| 
\\ 
&=\sum_{j=1}^l\int_{F_{k_j}(A_j)}\left| h^{(k_j)}(F_{k_j}^{-1}(x))\right| \, dx_1 \dots dx_m 
\\ 
&=\sum_{j=1}^l\sum_{a=1}^\infty \int_{F_{k_j}(B_a\cap E_j)}
\left| h^{(k_j)}(F_{k_j}^{-1}(x))\right| 
\, dx_1 \dots dx_m  
\\ 
&=\sum_{a=1}^\infty\sum_{j=1}^l \int_{F_{k_j}(B_a\cap E_j)}
\left| h^{(k_j)}(F_{k_j}^{-1}(x))\right| 
\, dx_1 \dots dx_m 
\\ 
&=\sum_{a=1}^\infty \mu_K(B_a),    
\end{align*} 
which is what we need to prove.   
\par 
Let $\frak M$ be the Borel $\sigma$ algebra on $G/H$. We now define a measure 
$\mu$ on $\frak M$. For $E\in \frak M$, let 
\begin{equation}\label{j8e3.2+}  
\mu(E)=\sup_K \mu_K(E\cap K), 
\end{equation}  
where the supremum is taken over all compact sets $K$.   
To show that $\mu$ is a measure on $\frak M$, we prove that $\mu$ 
is countably additive on  $\frak M$.  Let $E=\cup_{j=1}^\infty E_j$ 
be a union of disjoint sets $E_j$ in  $\frak M$.  We prove 
\begin{equation} \label{j8e3.1} 
\mu(E)= \sum_{j=1}^\infty \mu(E_j).  
\end{equation} 
By the countable additivity of $\mu_K$, we easily see that 
\begin{align}\label{j8e3.2} 
\mu(E)&=\sup_K\mu_K(E\cap K)= \sup_K \sum_{j=1}^\infty \mu_K(E_j\cap K)
\\ 
&\leq  \sum_{j=1}^\infty \sup_K\mu_K(E_j\cap K)= \sum_{j=1}^\infty \mu(E_j). 
\notag 
\end{align}
To prove the reverse inequality, without loss of generality, 
we may assume that $\mu(E_j)<\infty$ for every $j$. 
For any $\epsilon>0$ and a positive 
integer $j$, there exists a compact set $K_j$ such that 
$$\mu(E_j) - 2^{-j}\epsilon < \mu_{K_j}(E_j\cap K_j).  $$  
Thus for any positive integer $N$ we have 
\begin{align}\label{j8e3.3} 
\mu(E)&= \sup_K \sum_{j=1}^\infty \mu_K(E_j\cap K)
\\ 
&\geq \sup_K\sum_{j=1}^N \mu_K(E_j\cap K) \geq \sum_{j=1}^N
\mu_{\cup_{l=1}^N K_l}(E_j\cap  \cup_{l=1}^N K_l)
\notag 
\\ 
&\geq \sum_{j=1}^N
\mu_{\cup_{l=1}^N K_l}(E_j\cap K_j)=\sum_{j=1}^N
\mu_{K_j}(E_j\cap K_j) 
\notag 
\\ 
&\geq \sum_{j=1}^N\mu(E_j) -\epsilon, 
\notag 
\end{align}
where we have used an easily observable fact that 
if $K$, $L$ are compact sets and 
$K\subset L$, then 
\begin{equation}\label{j8e3.5}
\mu_K(E)=\mu_{L}(E) \qquad \text{for $E\in \frak M_K$.} 
\end{equation}  
Letting $N\to \infty$ and $\epsilon \to 0$ in \eqref{j8e3.3}, 
we have the reverse inequality of \eqref{j8e3.2}, which completes 
the proof of \eqref{j8e3.1}. 
\par 
\begin{remark}\label{re12.1} 
Here we note that 
\begin{equation}\label{j8e3.6}
\mu(E)=\mu_K(E) \qquad \text{if $E\in \frak M_K$. }
\end{equation} 
This can be seen as follows. By the definition of $\mu(E)$, 
if $E\in \frak M_K$, we have 
\begin{equation}\label{j8e3.7}
\mu(E)=\sup_{K'} \mu_{K'}(E\cap K')\geq \mu_K(E\cap K)=\mu_K(E),  
\end{equation}  
where in the supremum $K'$ ranges over the collection of all compact sets.  
On the other hand, by \eqref{j8e3.5}  
$$\mu_{K'}(E\cap K')=\mu_{K'\cup K}(E\cap K')=\mu_{K}(E\cap K')\leq 
\mu_{K}(E).  $$ 
Thus, taking supremum in $K'$, we have the reverse inequality of 
\eqref{j8e3.7} and hence we have \eqref{j8e3.6}.  
\end{remark}  
\par
Next, we show that $\mu$ is $\tau(g)$-invariant.  
\begin{lemma}\label{j8inv.l5}
If $p\in \tau(k)W$, then 
$$\omega_p^{(k)}=\omega_{\tau(g)p}^{(gk)}\circ (d\tau(g))_p^\otimes,  $$ 
where $(d\tau(g))_p^\otimes$ is defined from $(d\tau(g))_p$ as 
$\left(d\varphi^{(k,p)}\right)_p^\otimes$ is defined by 
$\left(d\varphi^{(k,p)}\right)_p$ in \eqref{e11.2}. 
\end{lemma}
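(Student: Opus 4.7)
The plan is to unwind the definitions of $\varphi^{(k,p)}$ and $\omega^{(k)}_p$ and reduce the assertion to a clean functorial identity
$$\varphi^{(gk,\tau(g)p)} \circ \tau(g) = \varphi^{(k,p)}$$
on a neighborhood of $p$ in $\tau(k)W$, after which the lemma becomes a one-line application of the chain rule together with the defining equation \eqref{e11.1}.

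First, I would fix $p \in \tau(k)W$ and write $p = kvH$ with $v \in V$ uniquely determined, so that $\varphi^{(k,p)}(r) = \tau(v^{-1}k^{-1})(r)$ for $r \in \tau(k)W$. Then I would check that $\tau(g)p \in \tau(gk)W$, because $\tau(g)p = (gk)vH$, and that the analogue of $v$ for the pair $(gk, \tau(g)p)$ is the same element $v$; indeed $(\pi|V)^{-1}\tau((gk)^{-1})(\tau(g)p) = (\pi|V)^{-1}(vH) = v$. Consequently
$$\varphi^{(gk,\tau(g)p)}(r') = \tau(v^{-1}k^{-1}g^{-1})(r'), \qquad r' \in \tau(gk)W.$$
Substituting $r' = \tau(g)s$ with $s \in \tau(k)W$ and using $\tau(v^{-1}k^{-1}g^{-1})\circ \tau(g) = \tau(v^{-1}k^{-1})$ yields the key identity
$$\varphi^{(gk,\tau(g)p)} \circ \tau(g) = \varphi^{(k,p)}$$
on $\tau(k)W$.

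Next, I would differentiate this identity at $p$ via the chain rule to obtain
$$(d\varphi^{(k,p)})_p = (d\varphi^{(gk,\tau(g)p)})_{\tau(g)p} \circ (d\tau(g))_p,$$
and then pass to the $m$-fold tensor maps to get
$$(d\varphi^{(k,p)})_p^\otimes = (d\varphi^{(gk,\tau(g)p)})_{\tau(g)p}^\otimes \circ (d\tau(g))_p^\otimes.$$
Composing with $\omega_o$ and applying \eqref{e11.1} twice gives
$$\omega_p^{(k)} = \omega_o \circ (d\varphi^{(k,p)})_p^\otimes = \omega_o \circ (d\varphi^{(gk,\tau(g)p)})_{\tau(g)p}^\otimes \circ (d\tau(g))_p^\otimes = \omega_{\tau(g)p}^{(gk)} \circ (d\tau(g))_p^\otimes,$$
which is the assertion.

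The only potential obstacle is purely bookkeeping: correctly identifying the element $v$ that appears in the definition of $\varphi^{(gk,\tau(g)p)}$ (i.e., verifying that translation by $g$ does not change $v$, only the base point $k$), so that the composition $\varphi^{(gk,\tau(g)p)} \circ \tau(g)$ simplifies cleanly. Once this is checked, the rest of the argument is automatic from the chain rule.
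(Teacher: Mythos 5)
Your proposal is correct and is essentially identical to the paper's proof: you establish the same key identity $\varphi^{(gk,\tau(g)p)}\circ\tau(g)=\varphi^{(k,p)}$ (by noting the representative $v$ is unchanged under left translation by $g$), differentiate it via the chain rule, and compose with $\omega_o$ using \eqref{e11.1}.
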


\begin{proof}  
By the definition \eqref{e11.1} we have 
$$\omega_{\tau(g)p}^{(gk)}=\omega_o\circ 
(d\varphi^{(gk, \tau(g)p)})_{\tau(g)p}^\otimes.  $$
If $p=kvH$, $v\in V$, 
$$ \varphi^{(gk, \tau(g)p)}(r)=\tau(v^{-1}k^{-1}g^{-1})(r)
=\tau(v^{-1}k^{-1})\tau(g^{-1})(r), $$  
and hence 
$$ (\varphi^{(gk, \tau(g)p)}\circ \tau(g))(r)=\tau(v^{-1}k^{-1})(r)
=\varphi^{(k, p)}(r),  $$ 
which implies 
 $$ (d\varphi^{(gk, \tau(g)p)})_{\tau(g)p}\circ (d\tau(g))_p 
=(d\varphi^{(k, p)})_p.     $$ 
Therefore, we easily see that  
$$\omega_{p}^{(k)}=\omega_o\circ 
(d\varphi^{(k,p)})_{p}^\otimes = \omega_o\circ 
(d\varphi^{(gk, \tau(g)p)})_{\tau(g)p}^\otimes\circ (d\tau(g))_p^\otimes
=\omega_{\tau(g)p}^{(gk)}\circ (d\tau(g))_p^\otimes.  $$

\end{proof}
Let  $U_1=F_{k}(\tau(k)W)$, $U_2=F_{gk}(\tau(gk)W)$, $F_{k}(p)=(x_1(p), \dots, x_m(p))$, $F_{gk}(q)=(y_1(q), \dots, y_m(q))$. 
Let $\Psi : U_1 \to U_2$ be defined by 
$$\Psi(x)= \left(F_{gk}\circ \tau(g)\circ F_{k}^{-1}\right)(x)= 
\left(\Psi^{(1)}(x), \dots, \Psi^{(m)}(x)\right).  
$$  
We recall that 
$$\omega_p^{(k)}=h^{(k)}(p)\left((dx_1)_p\wedge \dots \wedge (dx_m)_p\right), 
\qquad 
\omega_q^{(gk)}=h^{(gk)}(q)\left((dy_1)_q\wedge \dots \wedge (dy_m)_q\right)
   $$  
for $p\in \tau(k)W$, $q\in \tau(gk)W$.   

\begin{lemma}\label{j8inv.l6}   
For $p\in \tau(k)W$, we have 
$$ 
h^{(k)}(p)\left((dx_1)_p\wedge \dots \wedge (dx_m)_p\right) 
=h^{(gk)}(\tau(g)p)J(\Psi)(F_k(p))
\left((dx_1)_p\wedge \dots \wedge (dx_m)_p\right).  
$$ 
\end{lemma}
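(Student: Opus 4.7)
The plan is to derive the identity by combining Lemma \ref{j8inv.l5} with the coordinate expressions of $\omega^{(k)}$ and $\omega^{(gk)}$, together with the standard chain-rule computation of how the pullback $(d\tau(g))_p^*$ acts on the wedge of coordinate 1-forms.

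First, I would apply Lemma \ref{j8inv.l5}, which directly gives
$$\omega_p^{(k)} = \omega_{\tau(g)p}^{(gk)} \circ (d\tau(g))_p^\otimes.$$
The left-hand side by definition equals $h^{(k)}(p)\,(dx_1)_p\wedge\dots\wedge(dx_m)_p$. On the right, expanding the definition yields
$$\omega_{\tau(g)p}^{(gk)} \circ (d\tau(g))_p^\otimes = h^{(gk)}(\tau(g)p)\,\bigl((dy_1)_{\tau(g)p}\wedge\dots\wedge(dy_m)_{\tau(g)p}\bigr)\circ(d\tau(g))_p^\otimes.$$
So the task reduces to showing that the pulled-back wedge equals $J(\Psi)(F_k(p))\,(dx_1)_p\wedge\dots\wedge(dx_m)_p$.

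Second, I would carry out this pullback computation. For any tangent vector $X_p \in T_p(G/H)$, by definition of the differential,
$$\bigl((dy_j)_{\tau(g)p}\circ (d\tau(g))_p\bigr)(X_p) = (d\tau(g))_p(X_p)(y_j) = X_p(y_j\circ \tau(g)).$$
Since $y_j\circ\tau(g) = \Psi^{(j)}\circ F_k$ on $\tau(k)W$, the chain rule gives
$$(dy_j)_{\tau(g)p}\circ (d\tau(g))_p = \sum_{i=1}^{m} \frac{\partial \Psi^{(j)}}{\partial x_i}(F_k(p))\,(dx_i)_p,$$
essentially the same calculation as in the proof of Proposition \ref{j8prop.1.1}. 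Taking the wedge of these $m$ one-forms and using the alternating property to collapse the multi-sum into a determinant produces the Jacobian:
$$\bigl((dy_1)_{\tau(g)p}\wedge\dots\wedge(dy_m)_{\tau(g)p}\bigr)\circ(d\tau(g))_p^\otimes = J(\Psi)(F_k(p))\,(dx_1)_p\wedge\dots\wedge(dx_m)_p.$$

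Third, I would combine the two previous steps. Substituting the last identity into the expression for $\omega_p^{(k)}$ yields
$$h^{(k)}(p)\,(dx_1)_p\wedge\dots\wedge(dx_m)_p = h^{(gk)}(\tau(g)p)\,J(\Psi)(F_k(p))\,(dx_1)_p\wedge\dots\wedge(dx_m)_p,$$
which is exactly the stated conclusion. I do not anticipate any real obstacle here; the argument is a bookkeeping exercise once Lemma \ref{j8inv.l5} is in hand, and the only subtlety is being careful about the order of composition in $\Psi = F_{gk}\circ\tau(g)\circ F_k^{-1}$ so that the correct partial derivatives appear. The same pattern of computation has already been executed in the proof of Proposition \ref{j8prop.1.1}, so it can essentially be quoted.
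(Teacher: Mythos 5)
Your proposal is correct and follows essentially the same route as the paper: both start from Lemma \ref{j8inv.l5} and then extract the Jacobian $J(\Psi)(F_k(p))$ via a chain-rule computation in the local charts $F_k$, $F_{gk}$. The only cosmetic difference is that you pull back the coordinate one-forms $(dy_j)_{\tau(g)p}$ through $(d\tau(g))_p$ and take their wedge, whereas the paper evaluates $\omega_{\tau(g)p}^{(gk)}\circ (d\tau(g))_p^\otimes$ directly on the basis $\left(\frac{\partial}{\partial x_1}\right)_p,\dots,\left(\frac{\partial}{\partial x_m}\right)_p$ and expands $(d\tau(g))_p\left(\frac{\partial}{\partial x_i}\right)_p$ in terms of $\left(\frac{\partial}{\partial y_l}\right)_{\tau(g)p}$; these are dual descriptions of the same calculation.
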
 
\begin{proof} 
Since 
$$ 
\omega_{\tau(g)p}^{(gk)}=h^{(gk)}(\tau(g)p)
\left((dy_1)_{\tau(g)p}\wedge \dots \wedge (dy_m)_{\tau(g)p}\right), 
$$ 
we see that 
\begin{align*} 
&\left(\omega_{\tau(g)p}^{(gk)}\circ (d\tau(g))_p^\otimes\right)
\left(\left(\frac{\partial}{\partial x_1}\right)_p, \dots , 
\left(\frac{\partial}{\partial x_m}\right)_p\right)
\\ 
&= \omega_{\tau(g)p}^{(gk)}
\left((d\tau(g))_p\left(\frac{\partial}{\partial x_1}\right)_p, \dots , 
(d\tau(g))_p\left(\frac{\partial}{\partial x_m}\right)_p\right)
\\ 
&= \omega_{\tau(g)p}^{(gk)}
\left(\sum_{l_1=1}^m 
\frac{\partial \Psi^{(l_1)}}{\partial x_1}(F_k(p))
\left(\frac{\partial}{\partial y_{l_1}}\right)_{\tau(g)p}, \dots , 
\sum_{l_m=1}^m 
\frac{\partial \Psi^{(l_m)}}{\partial x_m}(F_k(p))
\left(\frac{\partial}{\partial y_{l_m}}\right)_{\tau(g)p}
\right) 
\\ 
&= J(\Psi)(F_k(p)) h^{(gk)}(\tau(g)p).  
\end{align*}
This implies 
$$\omega_{\tau(g)p}^{(gk)}\circ (d\tau(g))_p^\otimes
=J(\Psi)(F_k(p)) h^{(gk)}(\tau(g)p)
\left((dx_1)_p\wedge \dots \wedge (dx_m)_p\right).  $$
Combining this with Lemma \ref{j8inv.l5}, we get the conclusion.  
\end{proof} 

\par 
Using Lemma \ref{j8inv.l6}, we can prove that $\mu$ is $\tau(g)$-invariant.  
Let $K=\cup_{j=1}^l E_j$, where $K$ is a compact set in $M$ and 
 $\{E_j\}$ is a family of mutually disjoint 
Borel sets such that $E_j\subset \tau(k_j)W$.  Then $\tau(g)K$ is compact and 
 $\tau(g)K=\cup_{j=1}^l \tau(g)E_j$, $\tau(g)E_j\subset \tau(gk_j)W$ and 
$\{\tau(g)E_j\}$ is a family of mutually disjoint Borel sets. 
Let  $U_1=F_{k_j}(\tau(k_j)W)$, $U_2=F_{gk_j}(\tau(gk_j)W)$, 
$F_{k_j}(p)=(x_1(p), \dots, x_m(p))$, $F_{gk_j}(q)=(y_1(q), \dots, y_m(q))$. 
Let $\Psi_j : U_1 \to U_2$ be defined by 
$$\Psi_j= F_{gk_j}\circ \tau(g)\circ F_{k_j}^{-1}.  
$$   
Let $A\in \frak M_K$. We first prove that 
\begin{equation} \label{j8e3.9}  
\mu_{\tau(g)K}(\tau(g)A)= \mu_{K}(A).  
\end{equation}  
By the definition of the measure $\mu_{\tau(g)K}$, we have 
\begin{align*} 
\mu_{\tau(g)K}(\tau(g)A)
&=\sum_{j=1}^l \int_{(\tau(g)E_j)\cap(\tau(g)A)}|\omega_q^{(gk_j)}| 
\\ 
&=\sum_{j=1}^l \int_{\tau(g)(E_j\cap A)}\left| h^{(gk_j)}(q)\right| \left|(dy_1)_q\wedge \dots \wedge (dy_m)_q\right| 
\\ 
&=\sum_{j=1}^l 
\int_{F_{gk_j}(\tau(g)(E_j\cap A))}
\left| h^{(gk_j)}(F_{gk_j}^{-1}(y))\right| 
\, dy_1 \dots dy_m. 
\end{align*} 
Applying change of variables $y=\Psi_j(x)$, this is equal to  
\begin{align*} 
&\sum_{j=1}^l \int_{F_{k_j}(E_j\cap A)}\left| h^{(gk_j)}(\tau(g)\circ 
F_{k_j}^{-1}(x))\right| |J(\Psi_j)(x)|\, dx_1 \dots dx_m 
\\ 
&= \sum_{j=1}^l \int_{F_{k_j}(E_j\cap A)}\left| h^{(k_j)}(F_{k_j}^{-1}(x))
\right| 
\, dx_1 \dots dx_m 
\\ 
&= \sum_{j=1}^l \int_{E_j\cap A}\left| h^{(k_j)}(p)\right| 
\, \left|(dx_1)_p\wedge \dots \wedge (dx_m)_p\right|,  
\end{align*} 
where the first equality follows from Lemma \ref{j8inv.l6}.  
Thus 
$$ \mu_{\tau(g)K}(\tau(g)A)=\sum_{j=1}^l 
\int_{(\tau(g)E_j)\cap (\tau(g)A)}|\omega_q^{(gk_j)}| 
= \sum_{j=1}^l \int_{E_j\cap A}|\omega_p^{(k_j)}| 
=\mu_K(A). 
$$ 
This proves \eqref{j8e3.9}.  
\par 
Let $A\in \frak M$. By the definition of $\mu$ in \eqref{j8e3.2+}, since 
the mapping $K\to \tau(g)K$ is a bijection from the set of all compact sets in 
$G/H$ to itself,  we have 
\begin{align*}  
\mu(\tau(g)A) &= \sup_K \mu_K((\tau(g)A)\cap K) 
\\ 
&= \sup_K \mu_{\tau(g)K}((\tau(g)A)\cap (\tau(g)K)) 
\\ 
&= \sup_K \mu_{\tau(g)K}(\tau(g)(A\cap K)) 
\\ 
&= \sup_K \mu_{K}(A\cap K) 
\\ 
&= \mu(A),     
\end{align*} 
where the penultimate equality follows from \eqref{j8e3.9}. 
This means that $\mu$ is $\tau(g)$-invariant, which completes the proof of 
Proposition \ref{j8p.1.0}.  

\section{Calculation of $\det (d\tau(h))_o$}\label{j8cal}

Here we state and prove a result used above.  Some relevant results can be 
found in \cite[Chap. IV]{M}. 
\begin{lemma}\label{j8inv.l7}    
Let $(d\tau(h))_o$, $h\in H$,  be an element of the linear isotropy group 
of $G$ at $o=\pi(e)$, 
where $\pi : G \to G/H$, $\tau(h): G/H\to G/H$ and  $(d\tau(h))_o: T_o(G/H) \to T_o(G/H)$ are as in Section $\ref{j8intro}$.  
Then 
$$\det \left((d\tau(h))_o\right) 
= \det (dA_h)_e/ \det \left((dA_h)_e|L(H)_e\right), 
$$ 
where $A_h:G\to G$ is defined by $A_h(u)=huh^{-1}$ and $(dA_h)_e: T_e(G) \to 
T_e(G)$, $(dA_h)_e|L(H)_e: T_e(H) \to T_e(H)$ are the differential and its 
restriction to $L(H)_e$; we can 
identify  $(dA_h)_e$ and $(dA_h)_e|L(H)_e$ with $Ad_G(h)$ and $Ad_H(h)$ of 
Proposition $\ref{j8p.1.0}$ by the relations 
$(Ad_G(h)X)_e=(dA_h)_e(X_e)$ and 
$(Ad_H(h)Y)_e=((dA_h)_e|L(H)_e)(Y_e)$, respectively 
$($\!we can identify $T_e(G)$ 
and $T_e(H)$ with $L(G)_e$ and $L(H)_e$, respectively$)$.   
\end{lemma}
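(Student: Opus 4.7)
The plan is to exploit the fundamental commutation relation between the conjugation $A_h$ on $G$ and the translation $\tau(h)$ on $G/H$, and then extract the determinant from a block-triangular decomposition of $(dA_h)_e$.

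First, I would verify that for every $h\in H$ and $u\in G$,
\begin{equation*}
\pi(A_h(u))=\pi(huh^{-1})=huh^{-1}H=huH=\tau(h)(\pi(u)),
\end{equation*}
because $h^{-1}\in H$. Hence $\pi\circ A_h=\tau(h)\circ \pi$. Differentiating at $e$ and using that $\pi(e)=o$, we obtain the identity $(d\pi)_e\circ (dA_h)_e=(d\tau(h))_o\circ (d\pi)_e$ of linear maps $T_e(G)\to T_o(G/H)$.

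Next, I would use the direct sum decomposition $T_e(G)=\mathcal M_e\oplus L(H)_e$ fixed in Section \ref{j8Some}. Since $\ker(d\pi)_e=L(H)_e$ and $\dim \mathcal M_e=\dim T_o(G/H)=m$, the restriction $(d\pi)_e|\mathcal M_e$ is a linear isomorphism onto $T_o(G/H)$. Moreover, because $A_h$ carries $H$ into itself (as $hHh^{-1}=H$), its differential $(dA_h)_e$ preserves the subspace $L(H)_e$. Writing $(dA_h)_e$ as a block matrix with respect to the ordered decomposition $\mathcal M_e\oplus L(H)_e$ therefore gives a block upper triangular form
\begin{equation*}
(dA_h)_e=\begin{pmatrix} B & C \\ 0 & D \end{pmatrix},
\end{equation*}
where $D=(dA_h)_e|L(H)_e$ and $B\colon \mathcal M_e\to \mathcal M_e$ is the composition of $(dA_h)_e|\mathcal M_e$ with the projection onto $\mathcal M_e$ along $L(H)_e$. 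In particular $\det (dA_h)_e=\det B\cdot \det D$.

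Finally, I would transfer $B$ through the isomorphism $(d\pi)_e|\mathcal M_e\colon \mathcal M_e\to T_o(G/H)$ to identify it with $(d\tau(h))_o$. Indeed, for $X\in\mathcal M_e$, write $(dA_h)_e X=BX+CX$ with $BX\in\mathcal M_e$ and $CX\in L(H)_e$; applying $(d\pi)_e$ and using $(d\pi)_e L(H)_e=0$ yields
\begin{equation*}
(d\tau(h))_o\bigl((d\pi)_e X\bigr)=(d\pi)_e\bigl((dA_h)_e X\bigr)=(d\pi)_e(BX),
\end{equation*}
which says exactly that $(d\tau(h))_o$ corresponds to $B$ under the isomorphism. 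Consequently,
\begin{equation*}
\det\bigl((d\tau(h))_o\bigr)=\det B=\frac{\det (dA_h)_e}{\det\bigl((dA_h)_e|L(H)_e\bigr)},
\end{equation*}
which is the asserted formula; the identification of $(dA_h)_e$ and $(dA_h)_e|L(H)_e$ with $Ad_G(h)$ and $Ad_H(h)$ is standard. The only mildly delicate point is keeping track of the identification between $T_o(G/H)$ and the quotient $T_e(G)/L(H)_e$ (realized here concretely by $\mathcal M_e$), so that the induced quotient map is indeed $B$ and not merely similar to it; the block upper triangular shape, which depends crucially on $(dA_h)_e$ preserving $L(H)_e$, makes this transparent.
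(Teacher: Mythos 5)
Your proof is correct and takes essentially the same approach as the paper: both exploit $\pi\circ A_h=\tau(h)\circ\pi$, the fact that $\ker(d\pi)_e=L(H)_e$, and the reduction of $\det(d\tau(h))_o$ to the determinant of $(dA_h)_e$ on the quotient $T_e(G)/L(H)_e$ (the paper works abstractly with the quotient map $(dA_h)_e^\dagger$, you realize the quotient concretely as the complement $\mathcal M_e$). One small notational slip: since $(dA_h)_e$ preserves the \emph{second} summand $L(H)_e$, the block form with respect to $\mathcal M_e\oplus L(H)_e$ is lower triangular $\begin{pmatrix}B&0\\C&D\end{pmatrix}$, not upper triangular as you wrote; your subsequent use ($(dA_h)_eX=BX+CX$ with $CX\in L(H)_e$) is consistent with the lower-triangular form and the determinant conclusion is unaffected.
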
 
\begin{proof} 
Consider 
$$(d\pi)_e : T_e(G) \to T_o(G/H). $$  
We see that $\mathrm{ker}  (d\pi)_e = L(H)_e=\{(di)_e Y_e: Y_e\in T_e(H)\}$,  
where  $i: H\to G$ is defined by $i(x)=x$. 
\par 
Let $X_e\in L(H)_e$.  Then $X_e= (di)_e Y_e$ with $Y\in L(H)$. Thus 
$$(d\pi)_eX_e=(d\pi)_e((di)_e Y_e)= ((d\pi)_e\circ (di)_e)(Y_e)= 
(d(\pi\circ i))_e(Y_e)=0, $$
since $\pi\circ i : H \to \{o\}$, and hence $(d(\pi\circ i))_e=0$ on 
$T_e(H)$.  Therefore $L(H)_e\subset \mathrm{ker}  (d\pi)_e$.  
\par 
Next, let $X_e \in T_e(G)$ and $(d\pi)_eX_e=0$.  
Let $L(G)= \mathcal M + L(H)$ be a direct sum as in Section \ref{j8Some}. 
We take a basis  $\{X_j\}_{j=1}^l$, $l=m+n$, of $L(G)$ 
such that $\{X_{j}\}_{j=1}^m$ and $\{X_{m+j}\}_{j=1}^n$ are  bases of 
$\mathcal M$ and $L(H)$, respectively.  
Write $X=\sum_{j=1}^l c_j X_j$. 
 Let $U$ be a small open neighborhood in $\mathcal M$ of $0$ 
and $V=\exp(U)$ in $G$ as above. Let $f$ be a $C^\infty$ function on 
a neighborhood of $o$ in $G/H$.   
Then 
\begin{equation}\label{j8e.1} 
 ((d\pi)_eX_e)f= \sum_{j=1}^l c_j (X_j)_e (f\circ \pi) = 
\sum_{j=1}^l c_j\left[\frac{d}{dt}f(\pi(\exp(tX_j)))\right]_{t=0}. 
\end{equation}  
If $j\geq m+1$, $\exp(tX_j)\in H$ and so $\pi(\exp(tX_j))=o$ and if 
$1\leq j\leq m$, $\pi(\exp(tX_j))=(\pi|V)(\exp(tX_j))$ if $|t|$ is 
small enough.  
Using these observations in \eqref{j8e.1}, we see that 
\begin{equation}\label{j8e.e.2} 
 ((d\pi)_eX_e)f= 
\sum_{j=1}^m c_j\left[\frac{d}{dt}f((\pi|V)(\exp(tX_j)))\right]_{t=0}. 
\end{equation}  
Let $(x^1, \dots , x^l)$ be the canonical coordinate about $e$ 
with respect to $\{X_j\}_{j=1}^l$, then 
$$ x^{k}(\exp(tX_k))=tc_k, \quad x^{k}(\exp(tX_j))=0 \quad (j\neq k), $$  
if $|t|$ is small enough. 
Thus, taking $x^k\circ (\pi|V)^{-1}$ for $f$ in \eqref{j8e.e.2}, we have 
\begin{equation*}\label{j8e1}
0= ((d\pi)_eX_e)\left(x^k\circ (\pi|V)^{-1}\right)=
\sum_{j=1}^m c_j\left[\frac{d}{dt}x^k(\exp(tX_j))\right]_{t=0}=c_k 
\end{equation*}  
for $1\leq k\leq m$. 
Therefore we have $X_e=\sum_{j=m+1}^l c_j (X_j)_e \in L(H)_e$. 
\par 
We have established that $\mathrm{ker}  (d\pi)_e = L(H)_e$.   
Using this, we can define the mapping 
$(d\pi)_e^\dagger : T_e(G)/L(H)_e \to T_o(G/H)$ which is 
an isomorphism between vector spaces, by $(d\pi)_e^\dagger(X_e+L(H)_e) 
= (d\pi)_e(X_e)$. 
\par  
Let $h\in H$.  We see that $(dA_h)_eY_e \in L(H)_e$ 
if $Y_e\in L(H)_e$.   
 Therefore we can define 
$$(dA_h)_e^\dagger: L(G)_e/L(H)_e \to  L(G)_e/L(H)_e$$
 by $(dA_h)_e^\dagger(X_e+ L(H)_e)= (dA_h)_e(X_e) + L(H)_e$.  
\par 
 Define $P_e : L(G)_e \to L(G)_e/L(H)_e$, 
by $P_e(X_e)= X_e+ L(H)_e$.  Then, obviously, 
\begin{equation}\label{j8e.2}
(dA_h)_e^\dagger\circ P_e =P_e\circ (dA_h)_e, 
\end{equation} 
 since for $X_e\in L(G)_e$ we have 
$$(dA_h)_e^\dagger\circ P_e(X_e)= (dA_h)_e^\dagger(X_e+L(H)_e) 
=(dA_h)_e(X_e) +L(H)_e = P_e\circ (dA_h)_e(X_e). $$
Also we note that 
\begin{equation}\label{j8e.3}
(d\tau(h))_o\circ (d\pi)_e = (d\pi)_e\circ (dA_h)_e, 
\end{equation}
since $\tau(h)\circ \pi= \pi\circ A_h$. Let 
$I=((d\pi)_e^\dagger)^{-1}: T_o(G/H) \to T_e(G)/L(H)_e$.  Then 
$P_e=I\circ (d\pi)_e$. Thus by \eqref{j8e.2} we have 
\begin{equation*}
(dA_h)_e^\dagger\circ I\circ (d\pi)_e =I\circ (d\pi)_e\circ (dA_h)_e, 
\end{equation*} 
and hence 
\begin{equation}\label{j8e.4}
I^{-1}\circ (dA_h)_e^\dagger\circ I\circ (d\pi)_e =(d\pi)_e\circ (dA_h)_e.  
\end{equation} 
Equations \eqref{j8e.3} and \eqref{j8e.4} imply 
$$(d\tau(h))_o\circ (d\pi)_e  = 
I^{-1}\circ (dA_h)_e^\dagger\circ I\circ (d\pi)_e, $$ 
from which it follows that 
\begin{equation}\label{j8e.e.5}
(d\tau(h))_o = 
I^{-1}\circ (dA_h)_e^\dagger\circ I.  
\end{equation} 
On the other hand, by a result of linear algebra, we have  
$$\det \left((dA_h)_e^\dagger\right) 
= \det (dA_h)_e/ \det \left((dA_h)_e|L(H)_e\right). 
$$ 
This and \eqref{j8e.e.5} imply  
$$\det \left((d\tau(h))_o\right) = \det \left((dA_h)_e^\dagger\right) 
= \det (dA_h)_e/ \det \left((dA_h)_e|L(H)_e\right). 
$$ 
This completes the proof of Lemma \ref{j8inv.l7}. 

\end{proof}

\section{Application to Grassmann manifolds} \label{sec14} 

We apply Proposition \ref{j8p.1.0} to the Grassmann manifolds.  

\begin{proposition}\label{j8p1}
Let $G_{k,n}$, $1\leq k<n$, 
 be the Grassmann manifold of $k$ dimensional subspaces in 
$\Bbb R^n$ 
$($see \cite[pp. 63--65]{Bo}, \cite[Section 2 of Chap. 13]{Sp}$)$. 
We identify $G_{k,n}$ with the homogeneous manifold 
${\rm SO}(n)/H$, where $H$ is the closed subgroup of ${\rm SO}(n)$ consisting 
of the elements of ${\rm SO}(n)$ having the form: 
$$ \begin{pmatrix} 
A  &  0
 \\
0  &   B 
\end{pmatrix} 
\qquad A\in {\rm O}(k), B\in {\rm O}(n-k), \quad \det A \det B =1$$  
$($\!see \cite[pp. 362--363]{Bo}, \cite[Chap. IV, \S 18]{M}$)$. 
Then there exists an ${\rm SO}(n)$-invariant measure on $G_{k,n}$. 
\end{proposition}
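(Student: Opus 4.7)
The plan is to verify the hypothesis of Proposition \ref{j8p.1.0} for $G={\rm SO}(n)$ and the given closed subgroup $H$, after establishing the identification $G_{k,n}\cong {\rm SO}(n)/H$. The whole proof reduces to a short compactness argument, since the decisive condition $|\det Ad_G(h)|=|\det Ad_H(h)|$ for $h\in H$ is automatic when both groups are compact.

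First I would recall the homogeneous-space identification. The group ${\rm SO}(n)$ acts on $G_{k,n}$ by $g\cdot\theta=g(\theta)$. This action is transitive, since any orthonormal basis of a $k$-plane can be completed to an orthonormal basis of $\Bbb R^n$ with the correct orientation. The isotropy subgroup of $\theta_0={\rm sp}\{e_1,\dots,e_k\}$ consists of orthogonal transformations preserving both $\theta_0$ and $\theta_0^\perp$, hence of block-diagonal matrices ${\rm diag}(A,B)$ with $A\in {\rm O}(k)$, $B\in {\rm O}(n-k)$ and $\det A\det B=1$; this is exactly $H$. Standard homogeneous-space theory then gives a diffeomorphism ${\rm SO}(n)/H\to G_{k,n}$ equivariant with the ${\rm SO}(n)$-action, so that invariance on ${\rm SO}(n)/H$ translates to ${\rm SO}(n)$-invariance on $G_{k,n}$.

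Next I would check the determinant condition in Proposition \ref{j8p.1.0}. Both ${\rm SO}(n)$ and $H$ are compact, the latter because it is closed in the compact group ${\rm SO}(n)$. For any compact Lie group $K$ with Lie algebra $L(K)$, the map
\begin{equation*}
K\longrightarrow (0,\infty),\qquad k\longmapsto |\det Ad_K(k)|,
\end{equation*}
is a continuous homomorphism. Its image is a compact subgroup of the multiplicative group $(0,\infty)$, hence equal to $\{1\}$. Applying this to $K={\rm SO}(n)$ and to $K=H$ yields $|\det Ad_{{\rm SO}(n)}(h)|=1=|\det Ad_H(h)|$ for every $h\in H$, so the hypothesis of Proposition \ref{j8p.1.0} is satisfied and the existence of an ${\rm SO}(n)$-invariant Borel measure on $G_{k,n}$ follows.

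There is essentially no obstacle: the only point requiring mild care is to make sure the identification $G_{k,n}\cong{\rm SO}(n)/H$ is stated so that the $\tau(g)$-invariance from Proposition \ref{j8p.1.0} transports back to invariance under the natural ${\rm SO}(n)$-action on $k$-planes. Everything else reduces to the compactness argument above, which is why no further structural computation is needed.
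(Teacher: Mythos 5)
Your proof is correct. You reduce the hypothesis of Proposition \ref{j8p.1.0} to the observation that both ${\rm SO}(n)$ and $H$ are compact, hence unimodular: the continuous homomorphism $k\mapsto|\det Ad_K(k)|$ into $(0,\infty)$ must have compact, hence trivial, image. This is exactly the shortcut the paper itself acknowledges at the start of the proof (``Since compact Lie groups are unimodular, Proposition \ref{j8p.1.0} immediately implies the existence of an invariant measure on ${\rm SO}(n)/H$.'') but then deliberately bypasses.

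The paper instead carries out an explicit linear-algebra computation: it fixes the decomposition $\frak{g}=\frak{m}+\frak{h}$ with $\frak{m}$ the space of skew matrices $\left(\begin{smallmatrix}0&F\\-F^t&0\end{smallmatrix}\right)$, identifies $\frak{m}$ with $M(k,n-k)$, and computes the induced map $T_h$ as $F\mapsto AFB^{-1}$, whose determinant is $(\det A)^{n-k}(\det B)^k$, hence equal to $1$ or $(-1)^n$. This establishes $|\det T_h|=1$ directly, and the linear-algebra identity behind Lemma \ref{j8inv.l7} then gives $|\det Ad_G(h)/\det Ad_H(h)|=1$. What that extra work buys is the \emph{sign} of $\det T_h=\det (d\tau(h))_o$, which is exactly the obstruction to orientability of ${\rm SO}(n)/H$: the quotient admits an ${\rm SO}(n)$-invariant volume \emph{form} (not just a density) precisely when $\det(d\tau(h))_o=+1$ for all $h\in H$, so the computation shows at a glance when $G_{k,n}$ is orientable. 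Your compactness argument is shorter and sufficient for the stated proposition, but it only sees the absolute value and hence does not recover this orientability information, which is the stated reason the paper takes the longer route.

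One small point worth keeping: you correctly note that the diffeomorphism $G_{k,n}\cong {\rm SO}(n)/H$ must be equivariant so that $\tau(g)$-invariance on the quotient transports to ${\rm SO}(n)$-invariance on $k$-planes; the paper treats this transport in Remark \ref{re14.2}, and it is not entirely automatic, so flagging it is appropriate.
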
 
Since compact Lie groups are unimodular, Proposition \ref{j8p.1.0} 
immediately implies the existence of an invariant measure on 
${\rm SO}(n)/H$. Here we give a proof according to \cite[Chap. V]{M}, 
which also provides an information on a condition when ${\rm SO}(n)/H$ 
is orientable. 
\begin{proof}[Proof of Proposition $\ref{j8p1}$]  
Let $\frak{o}(n)$ be the set of skew symmetric $n\times n$ 
real matrices.  Let $G={\rm SO}(n)$.  
Then $\frak{g}=L(G)=\frak{o}(n)$ and 
$$ \frak{h}=L(H)= \left\{\begin{pmatrix} 
C  &  0
 \\
0  &   D
\end{pmatrix}: C\in \frak{o}(k), D\in \frak{o}(n-k)\right\}.  $$
 Let $M(k,n-k)$ be the set of $k\times (n-k)$ real matrices and 
$$ \frak{m}=\left\{\begin{pmatrix} 
0 &  F
 \\
-F^t  &  0  
\end{pmatrix}:  
 F \in M(k,n-k)\right\},  $$  
where we recall that $F^t$ denotes the transpose of $F$. 
Then, we have a direct sum 
$$ \frak{g}=\frak{m}+\frak{h}. $$
Let $h\in H$. 
It is known that $Ad_G(h) : \frak{g} \to \frak{g}$ is defined by 
$Ad_G(h)X=hXh^{-1}$ and that 
 $Ad_H(h) : \frak{h} \to \frak{h}$ is defined by $Ad_H(h)Y=hYh^{-1}$.  
Define $T_h: \frak{m} \to \frak{m}$ by 
$T_h X=hXh^{-1}$, more specifically, 
$$T_h X=\begin{pmatrix} 
0 &  AFB^{-1} 
 \\
-BF^t A^{-1} &  0  
\end{pmatrix},  $$  
$$h=\begin{pmatrix} 
A  &  0
 \\
0  &   B 
\end{pmatrix}, 
\quad   X=\begin{pmatrix} 
0 &  F
 \\
-F^t  &  0  
\end{pmatrix}.   $$ 
Then,  the equality 
$$\left|\det Ad_G(h) \right|=\left|\det Ad_H(h) \right|   $$  
follows from 
\begin{equation}\label{j8e.5}
\left|\det T_h\right| =1. 
\end{equation}  
Thus, if we prove \eqref{j8e.5}, by Proposition \ref{j8p.1.0} 
we have the conclusion of Proposition \ref{j8p1}.   
\par
Define an isomorphism $J: \frak{m} \to  M(k,n-k)$ by $J(X)=F$ for 
$$ X=\begin{pmatrix} 
0 &  F
 \\
-F^t  &  0  
\end{pmatrix}.  $$ 
For $h\in H$ of the form 
$$ h= \begin{pmatrix} 
A  &  0
 \\
0  &   B 
\end{pmatrix} 
$$ 
define $S_h: M(k,n-k) \to M(k,n-k)$ by $S_h(F)=AFB^{-1}$. Then 
$$T_h=J^{-1}\circ S_h \circ J.  $$
Thus, \eqref{j8e.5} follows, if we show that $|\det S_h|=1$. 
To prove this we recall that 
 $\det S_h=(\det A)^{n-k}(\det B)^{k}$ (see \cite[Chap. V]{M}) 
and hence we see that $\det S_h= 1$ or $\det S_h= (-1)^n$ 
since $\det A=\det B=1$ or $\det A=\det B=-1$. 

\end{proof} 

We can find in \cite[3.2]{KP} another way of constructing an invariant measure 
on $G_{k,n}$ based on the Haar measure on the orthogonal group ${\rm O}(n)$. 

\begin{remark}\label{re14.2}   
In Section \ref{sec2} we considered the Grassmann manifold $M=G_{k,n}$ as a 
manifold equipped with the coordinate system given in  \cite[pp. 63--65]{Bo}.  
The reason we chose the structure of $M$ as the initial definition of the 
Grassmann manifold lies in proving Lemma \ref{L2.3} through the estimates 
for the Jacobian in \eqref{eeee2.1}. 
In Proposition \ref{j8p1} we regarded the Grassmann manifold 
 as the homogeneous manifold 
$N={\rm SO}(n)/H$ and proved the existence of the invariant measure $\mu$ on 
 $N$.  From this we can see the existence of the invariant measure $\sigma$ on 
$M$ in Section \ref{sec2}. 
Here we have a more specific explanation on this.  
It is known that there is a diffeomorphism $\varphi: M \to N$  such that 
$$\varphi(g\theta)= \tau(g)\varphi(\theta), $$ 
where $g\in G:={\rm SO}(n)$ and 
$\tau(g)$ is as in Section \ref{j8intro} 
(see \cite[Theorem 9.3 in Chap. IV]{Bo},  
\cite[Theorem of \S 17 in Chap. IV]{M}). 
For a Borel set $A$ in $M$, define $\sigma(A)= \mu(\varphi(A))$. Then $\sigma$ 
is a $G$ invariant measure, since 
$$\sigma(gA)=\mu(\varphi(gA))=\mu(\tau(g)\varphi(A))=\mu(\varphi(A))
=\sigma(A).  $$  
\par 
Next, we see the local coordinates expression of $\sigma$. 
 Let $\left\{(V_a, J_a): 1\leq a \leq L_1\right\}$, $L_1=\binom{n}{k}$,  and 
$\left\{(\tau(k)W, F_{k}): k \in G \right\}$ 
 be a $C^\infty$ coordinate system of $M$  
as in \cite[pp. 63--65]{Bo} and that of $N$ 
as in Section \ref{j8Some}, respectively.  
Take a finite covering 
$\left\{(\tau(k_l)W, F_{k_l}): k_l \in G, 1\leq l\leq L_2 \right\}$ 
of $N$. 
Then 
$M=\cup_{a, l} (V_a \cap \varphi^{-1}(\tau(k_l)W))$ 
($1\leq a \leq L_1, 1\leq l\leq L_2$).  Let $E$ be a Borel set such that 
$E \subset (V_a \cap \varphi^{-1}(\tau(k_l)W))$.  We write 
$J_a(\theta)=(y_1(\theta), \dots, y_m(\theta))$, 
$F_{k_l}(p)=(x_1(p), \dots, x_m(p))$, 
$m=k(n-k)$.  Then, as in   
Section \ref{sec12}, we have 
\begin{multline*}
\sigma(E)=\mu(\varphi(E))
=\int_{\varphi(E)} |h^{(k_l)}(p)| \left|(dx_1)_p\wedge \dots \wedge (dx_m)_p
\right|
\\ 
= \int_{F_{k_l}(\varphi(E))} |h^{(k_l)}(F_{k_l}^{-1}(x))| dx_1 \dots dx_m. 
\end{multline*}
Changing variables $x=F_{k_l}\circ \varphi\circ J_a^{-1}(y)=: \psi(y)$ in the 
integral above, we have 
 \begin{multline*}
\sigma(E)
= \int_{J_a(E)} |h^{(k_l)}(\varphi\circ J_{a}^{-1}(y))| |J(\psi)(y)|
\, dy_1 \dots dy_m 
\\ 
=\int_{E} |h^{(k_l)}(\varphi(\theta))| |J(\psi)(J_a(\theta))|
\left|(dy_1)_\theta \wedge \dots \wedge (dy_m)_\theta\right|.  
\end{multline*}

\end{remark}

\end{document}